\numberwithin{equation}{section}
\theoremstyle{plain}
\newtheorem{theorem}{Theorem}[section]
\newtheorem{corollary}[theorem]{Corollary}
\newtheorem{proposition}[theorem]{Proposition}
\newtheorem{lemma}[theorem]{Lemma}
\theoremstyle{remark}
\newtheorem{example}[theorem]{Example}
\newtheorem*{acknowledgements}{Acknowledgement}
\theoremstyle{definition}
\newcommand{\BB}{\mathcal{B}}
\newcommand{\LL}{\mathcal{L}}
\newcommand{\CC}{\mathcal{C}}
\newcommand{\UU}{\mathcal{U}}
\renewcommand{\SS}{\mathcal{S}}
\newcommand{\R}{\mathbb{R}}
\newcommand{\RP}{\mathbb{RP}^1}
\newcommand{\N}{\mathbb{N}}
\newcommand{\hhh}{\mathtt{h}}
\newcommand{\iii}{\mathtt{i}}
\newcommand{\jjj}{\mathtt{j}}
\newcommand{\kkk}{\mathtt{k}}
\newcommand{\eps}{\varepsilon}
\newcommand{\A}{\mathsf{A}}
\newcommand{\B}{\mathsf{B}}
\newcommand{\dd}{\,\mathrm{d}}
\renewcommand{\ge}{\geqslant}
\renewcommand{\le}{\leqslant}
\renewcommand{\geq}{\geqslant}
\renewcommand{\leq}{\leqslant}
\DeclareMathOperator{\dist}{dist}
\DeclareMathOperator{\diam}{diam}
\DeclareMathOperator{\tr}{tr}
\DeclareMathOperator{\rank}{rank}
\DeclareSymbolFont{extraup}{U}{zavm}{m}{n}
\DeclareMathSymbol{\vardiamond}{\mathalpha}{extraup}{87}
\definecolor{blue}{rgb}{0,0,1}
\begin{document}

\title[Domination and thermodynamic formalism]{Domination, almost additivity, and thermodynamic formalism for planar matrix cocycles}

\author{Bal\'azs B\'ar\'any}
\address[Bal\'azs B\'ar\'any]
        {Budapest University of Technology and Economics \\
         MTA-BME Stochastics Research Group \\
         Department of Stochastics \\
         P.O.\ Box 91 \\
         1521 Budapest \\
         Hungary}
\email{balubsheep@gmail.com}

\author{Antti K\"aenm\"aki}
\address[Antti K\"aenm\"aki]
        {Department of Physics and Mathematics \\
         University of Eastern Finland \\
         P.O.\ Box 111 \\
         FI-80101 Joensuu \\
         Finland}
\email{antti.kaenmaki@uef.fi}

\author{Ian D. Morris}
\address[Ian D. Morris]
        {Department of Mathematics\\
	       University of Surrey\\
	       Guildford GU2 7XH\\
	       United Kingdom}
\email{i.morris@surrey.ac.uk}

%\thanks{}
\subjclass[2000]{Primary 37D30, 37D35.}%37D30: dominated splittings, 37D35: thermodynamic formalism
\keywords{Products of matrices, dominated splitting, thermodynamic formalism, almost additivity, Gibbs states}
\date{\today}

\begin{abstract}
In topics such as the thermodynamic formalism of linear cocycles, the dimension theory of self-affine sets, and the theory of random matrix products, it has often been found useful to assume positivity of the matrix entries in order to simplify or make feasible certain types of calculation. It is natural to ask how positivity may be relaxed or generalised in a way which enables similar calculations to be made in more general contexts. On the one hand one may generalise by considering \emph{almost additive} or \emph{asymptotically additive potentials} which mimic the properties enjoyed by the logarithm of the norm of a positive matrix cocycle; on the other hand one may consider matrix cocycles which are \emph{dominated}, a condition which includes positive matrix cocycles but is more general. In this article we explore the relationship between almost additivity and domination for planar cocycles. We show in particular that a locally constant linear cocycle in the plane is almost additive if and only if it is either conjugate to a cocycle of isometries, or satisfies a property slightly weaker than domination which is introduced in this paper. Applications to matrix thermodynamic formalism are presented.
\end{abstract}

\maketitle

\section{Introduction}

For the purposes of this article a \emph{linear cocycle} over a dynamical system $T \colon X \to X$ will be a skew-product
\[
  F \colon X \times \R^d \to X \times \R^d, \quad (x,p) \mapsto (Tx,\mathsf{A}(x) p),
\]
where $\mathsf{A} \colon X \to GL_d(\R)$ is continuous and $X$ is a compact metric space. Writing $\mathsf{A}^n_T(x) = \mathsf{A}(T^{n-1}x) \cdots \mathsf{A}(x)$, we thus have $F^n(x,p) = (T^nx, \mathsf{A}^n_T(x) p)$ for all $n \in \N$ and
\begin{equation} \label{eq:cocycle-id}
  \mathsf{A}^{m+n}_T(x) = \mathsf{A}^m_T(T^nx) \mathsf{A}^n_T(x)
\end{equation}
for all $m,n \in \N$. In numerous contexts it has been found useful to consider cocycles in which all of the matrices $\mathsf{A}(x)$ are \emph{positive}: we note for example such diverse articles as \cite{FurstenbergKesten1960,HueterLalley95,Jungers12,Pollicott10}. Under this assumption the cocycle satisfies the inequality
\[
  \left|\log \|\mathsf{A}^{m+n}_T(x)\| -\log \|\mathsf{A}^m_T(T^nx)\|-\log \| \mathsf{A}^n_T(x)\|\right| \leq C
\]
for some constant $C>0$ depending only on $\mathsf{A}$. This has led some authors to extend results for positive linear cocycles by considering, instead of a linear cocycle, a sequence of continuous functions $f_n \colon X \to \mathbb{R}$ satisfying the inequality
\[
  \left|f_{n+m}(x)-f_m(T^nx)-f_n(x)\right| \leq C
\]
for all $x \in X$ and $n,m \geq 1$. Such sequences of functions are referred to in the literature as \emph{almost additive} and have been investigated in \cite{Barreira06,BarreiraDoutor09,BomfimVarandas15,IommiYayama12,Yayama16}. The condition of almost additivity implies trivially a further property, \emph{asymptotic additivity} (see for example Feng and Huang \cite[Proposition~A.5]{FengHuang10}), which has been applied in \cite{Cao13,FengHuang10,IommiYayama17}.  In another category of works, positivity is replaced by the more general hypothesis of \emph{domination}: under this hypothesis there exists a continuous splitting $\mathbb{R}^d=\mathcal{U}(x)\oplus \mathcal{V}(x)$, which is preserved by the cocycle, such that $\|\mathsf{A}^n_T(x)u\| \geq Ce^{n\varepsilon} \|\mathsf{A}^n_T(x)v\|$ for all unit vectors $u \in \mathcal{U}(x)$ and $v\in \mathcal{V}(x)$, for some constants $C,\varepsilon>0$ (see \cite{BochiGourmelon09} and references therein). For linear cocycles the hypothesis of domination implies the hypothesis of almost additivity, but the converse is false, as can be seen trivially for the case of cocycles where all of the linear maps are isometries, or where all are equal to the identity. The purpose of this article is to explore precisely the relationship between domination and almost additivity in the context of locally constant two-dimensional linear cocycles over the shift. In this project we are motivated principally by applications to the topics of matrix thermodynamic formalism and the geometry of self-affine fractals.

We consider cocycles in the simplest non-commutative setting, namely in the case of planar matrices. A cocycle is dominated if and only if there is a uniform exponential gap between singular values of its iterates. This is equivalent to the existence of a strongly invariant multicone in the projective space; see \cite{AvilaBochiYoccoz2010,BochiGourmelon09}. Domination originates from \cite{Mane1978, Mane1984} and it is an important concept in differentiable dynamical systems; see \cite{BochiViana2005, BonattiDiazPujals2003}. Our contribution in this article to this line of research is to show that a planar matrix cocycle is dominated if and only if matrices are proximal and the norms in the generated sub-semigroup satisfy a certain multiplicativity property; see Corollary \ref{thm:dom-cor1}. Higher dimensions are more difficult: \cite[\S 4]{BochiGourmelon09} show that the connected components of the multicone need not be convex.

Of the several motivations for studying almost additive potentials, this article is concerned principally with thermodynamic formalism. In Theorem \ref{thm:holder} we will show that almost additive potentials arising from the norm potential of a two-dimensional locally-constant linear cocycle over the full shift can in almost all cases be studied simply by using the classical thermodynamic formalism. In fact, in our results, we are able to characterise all the properties of equilibrium states for these norm potentials by means of the properties of matrices. Theorem \ref{thm:fortuples} gives a positive answer to \cite[Question 7.4]{BaranyKaenmakiKoivusalo2017} in the two dimensional case. Furthermore, in Example \ref{example}, answering a folklore question, we show the existence of a quasi-Bernoulli equilibrium state which is not a Gibbs measure for any H\"older continuous potential.

\section{Preliminaries and statements of results}

For the remainder of this article we specialise to cocycles whose values are invertible two-dimensional real matrices. We take $\mathsf{A} \subset GL_2(\R)$, set $X=\mathsf{A}^\N$, denote the left shift on $X$ by $T$, and let $\mathsf{A}(x)$ be the first matrix in the infinite sequence $x \in X$. Let
\[
  F \colon X \times \R^d \to X \times \R^d, \quad (x,p) \mapsto (Tx,\mathsf{A}(x) p)
\]
be a linear cocycle over $T$. We see that $\mathsf{A}_T^n(x)$ is the product of $n$ first matrices in $x \in X$, and the cocycle identity \eqref{eq:cocycle-id} clearly holds. Let $\mathcal{S}(\A)$ denote the sub-semigroup generated by $\A$, that is, $\mathcal{S}(\A)=\{A_1\cdots A_n:n\in\N\text{ and }A_i\in\A\text{ for all }i\in\{1,\ldots,n\}\}$. So in particular, $\mathsf{A}_T^n(x) \in\mathcal{S}(\mathsf{A})$ for all $x = (A_1,A_2,\ldots) \in X$ and $n\in\N$.

\subsection{Domination} \label{sec:subdom}
Following \cite{BochiGourmelon09} we say that a compact and nonempty subset $\mathsf{A}\subset GL_2(\mathbb{R})$ is \emph{dominated} if there exist constants $C>0$ and $0<\tau<1$ such that
$$
\frac{|\det(A_1\cdots A_n)|}{\|A_1\cdots A_n\|^2}\leq C \tau^{n}
$$
for all $A_1,\dots,A_n\in\A$. We let $\RP$ denote the real projective line, which is the set of all lines through the origin in $\R^2$. We call a proper subset $\CC\subset\RP$ a \emph{multicone} if it is a finite union of closed projective intervals. We say that $\CC\subset\RP$ is a \emph{strongly invariant multicone} for $\A\subset GL_2(\R)$ if it is a multicone and $A\CC\subset\CC^o$ for all $A\in\A$. Here $\CC^o$ is the interior of $\CC$. By \cite[Theorem~B]{BochiGourmelon09}, a compact set $\A \subset GL_2(\R)$ has a strongly invariant multicone if and only if $\A$ is dominated. We say that $\CC\subset\RP$ is an \emph{invariant multicone} for $\A\subset GL_2(\R)$ if it is a multicone and $A\CC\subset\CC$ for all $A \in \A$.

Recall that a matrix $A$ is \emph{proximal} if it has two real eigenvalues with unequal absolute values, \emph{parabolic} if it has only one eigenspace, i.e.\ the single eigenvalue has geometric multiplicity one, and \emph{conformal} if it has two eigenvalues with the same absolute values. In other words, a matrix $A$ is conformal if and only if there exists an invertible matrix $M$, which we call a \emph{conjugation matrix} of $A$, such that $|\det(A)|^{-1/2}MAM^{-1}\in O(2)$, where $O(2)$ is the group of $2 \times 2$ orthogonal matrices. Furthermore, we say that a set $\A\subset GL_2(\R)$ is \emph{strongly conformal} if all the elements of $\A$ are conformal with respect to the same conjugation matrix. Strongly conformality is equivalent to the fact that all the elements in the generated semigroup are conformal.

%We have chosen to use the term strongly conformal since in the literature conformality of a set often refers to the property that the generated semigroup contains an conformal matrix.

For a proximal  matrix $A$, let $\lambda_u(A)$ and $\lambda_s(A)$ be the largest and smallest eigenvalues of $A$ in absolute value, respectively. If the eigenvalues are equal in absolute value, then the choice of $\lambda_u(A)$ and $\lambda_s(A)$ is arbitrary. Note that if $A$ is diagonalisable, then there exist linearly independent subspaces $u(A),s(A)\in\RP$ such that $|\lambda_u(A)|=\|A|u(A)\|$ and $|\lambda_s(A)|=\|A|s(A)\|$. We call $u(A)\in\RP$ the eigenspace of $A$ corresponding to $\lambda_u(A)$ and $s(A)\in\RP$ the eigenspace corresponding to $\lambda_s(A)$. If $\A\subset GL_2(\R)$, then we define $X_u(\A)$ and $X_s(\A)$ to be the closures of the sets of all unstable and stable directions of proximal elements of $\mathcal{S}(\A)$, i.e.\ the sets
\begin{align*}
  X_u(\A)&=\overline{\{u(A): A\in\SS(\A)\text{ is proximal}\}}, \\
  X_s(\A)&=\overline{\{s(A):A\in\SS(\A)\text{ is proximal}\}},
\end{align*}
respectively. Recall that $\mathcal{S}(\mathsf{A})$ is the sub-semigroup of $GL_2(\mathbb{R})$ generated by $\mathsf{A}$, i.e. the set of all finite products formed by the elements of $\A$. We say that $\A\subset GL_2(\R)$ has an \emph{unstable multicone $\CC$} if $\SS(\A)$ contains at least one proximal element and
\begin{enumerate}
  \item\label{i-X3} $\CC\cap X_s(\A)=\emptyset$,
  \item\label{i-X4} $\partial\CC\cap X_u(\A)=\emptyset$,
  \item\label{i-X5} each connected component of $\CC$ intersects $X_u(\A)$.
\end{enumerate}
Finally, we say that a semigroup $\mathcal{S}\subset GL_2(\R)$ is \emph{almost multiplicative} if there exists a constant $\kappa>0$ such that $\|AB\|\geq \kappa \|A\|\|B\|$ for all $A,B \in \mathcal{S}$. We note that since clearly $\|AB\| \leq \|A\|\|B\|$ for all $A,B \in \mathcal{S}(\mathsf{A})$ for every $\mathsf{A} \subset GL_2(\mathbb{R})$, the condition $\|AB\|\geq \kappa \|A\|\|B\|$ for all $A,B \in \mathcal{S}(\mathsf{A})$ is equivalent to the statement that every cocycle taking values in $\mathcal{S}(\mathsf{A})$ is almost additive in the sense defined in the introduction.

Our main result for matrix cocycles is the following theorem.

\begin{theorem}\label{thm:justdomin}
  Let $\mathsf{A}\subset GL_2(\mathbb{R})$. If the sub-semigroup $\mathcal{S}(\A)$ is almost multiplicative, then exactly one of the two following conditions hold:
  \begin{enumerate}
    \item $\A$ is strongly conformal,
    \item\label{thm:item2} $\A$ has an invariant unstable multicone and $\SS(\A)$ does not contain parabolic elements.
  \end{enumerate}
\end{theorem}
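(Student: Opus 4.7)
The two alternatives are mutually exclusive, since strong conformality forces every element of $\mathcal{S}(\A)$ to be conformal and thus precludes proximal elements of $\mathcal{S}(\A)$ and hence unstable multicones (which require such an element by definition). It therefore suffices to show that at least one alternative always holds. The first step is to eliminate parabolic elements: if $P \in \mathcal{S}(\A)$ were parabolic with eigenvalue $\lambda$, Jordan form gives $\|P^n\| \asymp n|\lambda|^{n-1}$, so $\|P^{2n}\|/\|P^n\|^2 = O(1/n) \to 0$, violating $\|P^{2n}\| \geq \kappa\|P^n\|^2$. Every element of $\mathcal{S}(\A)$ is therefore conformal or proximal, and I split on whether $\mathcal{S}(\A)$ contains a proximal element.

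If $\mathcal{S}(\A)$ has no proximal element, every element is conformal; after normalising by $|\det|^{1/2}$, the semigroup acts on $\mathbb{H}^2$ by elliptic isometries, each fixing a unique point. To show that all share a common fixed point (equivalent to strong conformality), I assume for contradiction that $A, B \in \A$ fix distinct points, write $A = M_A R_{\theta_A} M_A^{-1}$, $B = M_B R_{\theta_B} M_B^{-1}$, and set $C = M_A^{-1}M_B R_{\theta_B} M_B^{-1}M_A \in SL_2(\R)$. A direct computation gives
\[
  \tr(A^n B) = \tr(R_{n\theta_A} C) = R\cos(n\theta_A - \phi),
\]
with $R^2 = \|C\|_F^2 + 2 \geq 4$, equality holding iff $C \in O(2)$, iff $M_A^{-1}M_B \in O(2)$, iff $A$ and $B$ share a fixed point in $\mathbb{H}^2$. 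Distinct fixed points force $R > 2$; choosing $n$ suitably (using density of $\{n\theta_A\}_n$ modulo $2\pi$ when $\theta_A/2\pi$ is irrational, and supplementing with other words in $\mathcal{S}(\A)$ in the rational case) one obtains $|\tr(A^n B)| > 2$, producing a proximal element of $\mathcal{S}(\A)$, a contradiction. Hence every pair in $\A$ shares a fixed point, so $\A$ is strongly conformal.

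If $\mathcal{S}(\A)$ contains a proximal element, the invariant unstable multicone is constructed in three steps. (a) $X_u(\A) \cap X_s(\A) = \emptyset$: were $\xi \in X_u \cap X_s$ approached by proximal sequences $P_k, Q_k \in \mathcal{S}(\A)$ with $u(P_k), s(Q_k) \to \xi$, then for suitably large $k, m, n$ the estimate $\|Q_k^n P_k^m\| \asymp |\sin\angle(u(P_k), s(Q_k))|\cdot \|Q_k^n\|\|P_k^m\|$ would force the ratio $\|Q_k^n P_k^m\|/(\|Q_k^n\|\|P_k^m\|)$ below any given $\kappa > 0$, contradicting almost multiplicativity. (b) $A \cdot X_u(\A) \subseteq X_u(\A)$ for every $A \in \A$: the products $AP^n \in \mathcal{S}(\A)$ are proximal for large $n$ with $u(AP^n) \to Au(P)$ (which is legitimate since $X_u \cap X_s = \emptyset$ forces $Au(P) \neq s(P)$), and a closure argument handles limit points of $X_u$. (c) Choose $\CC$ to be a finite union of closed projective intervals containing $X_u(\A)$ in its interior and disjoint from $X_s(\A)$; the positive gap $d(X_u(\A), X_s(\A))$ permits such a $\CC$ satisfying conditions (i)--(iii), and the invariance $A\CC \subseteq \CC$ follows from $AX_u \subseteq X_u$ together with a uniform Lipschitz bound on the projective action of $\A$ in a neighbourhood of $X_u$ (bounded because every proximal $A \in \A$ has $s(A) \in X_s$, hence bounded away from $X_u$, while conformal $A \in \A$ act as bounded isometries). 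A small enough $\CC$, iterated to a trapping neighbourhood if necessary, achieves the required invariance.

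The two principal obstacles I expect are the rational-angle subcase of the conformal case, where $\{R_{n\theta_A}\}_n$ is a finite group and the non-elliptic witness must instead be produced from other words in $\mathcal{S}(\A)$; and the multicone construction in the proximal case, where a naive $\varepsilon$-tube around $X_u(\A)$ need not be $\A$-invariant because projective maps can stretch distances, so $\CC$ must be constructed either as a trapping neighbourhood exploiting compactness of $\A$ and the gap $d(X_u, X_s) > 0$, or as the closure of a forward orbit $\bigcup_{W \in \mathcal{S}(\A)\cup\{I\}}W\CC_0$ of a carefully chosen seed.
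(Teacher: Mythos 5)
Your high-level plan — rule out parabolic elements, dichotomise on whether $\mathcal{S}(\A)$ contains a proximal element, then argue strong conformality in one branch and build the multicone around $X_u(\A)$ in the other — mirrors the paper's structure, and your parabolic exclusion is exactly the paper's Lemma~\ref{thm:dom-lemma0}. However, both branches of your argument have real gaps, and in each case the paper's route is genuinely different and avoids them. In the conformal branch you both assume $\det>0$ (a normalised conformal matrix of negative determinant is a reflection, not an elliptic isometry, so the $\mathbb{H}^2$ picture breaks) and, as you note yourself, the trace argument fails when $\theta_A$ is a rational multiple of $\pi$; you do not say how to repair it. The paper does something else entirely: Lemma~\ref{lem:onlyel} works with $\mathscr{S}(\A)=\overline{\R\mathcal{S}(\A)}$, shows that absence of rank-one limits forces $\mathscr{S}(\A)\cap\{|\det|=1\}$ to be compact, hence a compact group, and then conjugates it into $O(2)$ by averaging an inner product over Haar measure — no angles, no rationality case-split, no positivity of determinants. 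One can even bridge your dichotomy to that one under the almost-multiplicativity hypothesis: for conformal $A$ one has $\|A^n\|\asymp_A|\det A|^{n/2}$ while $\|A^n\|\geq\kappa^{n-1}\|A\|^n$, giving $\|A\|\leq\kappa^{-1}|\det A|^{1/2}$, so the normalised semigroup is bounded and has no rank-one limit; but this observation replaces, rather than rescues, your trace computation.

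In the proximal branch the construction is incomplete at exactly the point you flag as an ``obstacle''. The theorem does \emph{not} assume $\A$ compact, so your fallback of ``a trapping neighbourhood exploiting compactness of $\A$'' is unavailable; and while your forward-orbit construction $\bigcup_W W\CC_0$ is indeed the paper's $\mathcal{U}_n=\bigcup_{A\in\mathcal{S}(\A)}A\mathcal{V}_n$, you stop before the two hard verifications. One must show $\overline{\mathcal{U}}_n\cap X_s=\emptyset$ for large $n$ — this is a nontrivial limiting argument using compactness of $\mathscr{S}(\A)$-normalisations (not of $\A$) together with $X_u\cap X_s=\emptyset$ — and one must show $\partial\mathcal{U}_n$ is \emph{finite}, which the paper proves via a packing argument: infinitely many boundary points would trap infinitely many components of $\mathcal{U}_n$, each meeting $X_u$, so an accumulation point of $\partial\mathcal{U}_n$ would lie in $X_u\subset\mathcal{U}_n$, contradicting openness. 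Without these two steps you have an invariant set but not a multicone. A smaller issue: your disjointness estimate $\|Q_k^nP_k^m\|\asymp|\sin\angle(u(P_k),s(Q_k))|\,\|Q_k^n\|\|P_k^m\|$ carries implied constants depending on $k$, so the interleaving of limits needs care; the paper avoids this by taking rank-one limits $B_1,B_2\in\mathscr{R}(\A)$ with $B_2\R^2=V$, $B_1V=0$, so $B_1B_2=0$, directly contradicting $\|B_1B_2\|\geq\kappa\|B_1\|\|B_2\|>0$.
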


The next two propositions show that if the proximal elements of $\A$ form a compact set, then the converse claim holds in Theorem \ref{thm:justdomin}.

\begin{proposition}\label{prop:connect}
  Let $\A\subset GL_2(\R)$ be such that $\A$ has an invariant unstable multicone and $\SS(\A)$ does not contain parabolic elements. Let $\A_e$ be the collection of all conformal elements of $\A$. Then
  \begin{enumerate}
    \item\label{item:that} $\A\setminus\A_e$ is nonempty and contains only proximal elements,
    \item\label{item:this} $\A_e$ is strongly conformal and $\mathcal{S}(\{|\det(A)|^{-1/2}A : A \in \mathsf{A}_e\})$ is finite.
  \end{enumerate}
    Moreover, if $\A\setminus\A_e$ is compact, then $\A\setminus\A_e$ has a strongly invariant multicone $\CC$ such that $A\CC=\CC$ for all $A\in\A_e$.
\end{proposition}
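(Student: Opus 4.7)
The plan is to handle (1), (2), and the Moreover statement in sequence, building on a single key observation about $\A_e$.

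\textbf{Key observation and Part (1).} I will first show that for every $A\in\A_e$ one has $A\CC=\CC$: by conformality $A$ preserves a smooth positive-density measure $\mu_A$ on $\RP$ (the push-forward via its conjugating matrix of a rotation-invariant measure), and combining $A\CC\subset\CC$ with $\mu_A(A\CC)=\mu_A(\CC)<\infty$ forces $\CC\setminus A\CC$ to be a relatively open subset of $\CC$ of Lebesgue measure zero, hence empty. A straightforward induction then yields $B\CC=\CC$ for every $B\in\SS(\A_e)$. Part~(1) is an immediate consequence: the absence of parabolics in $\SS(\A)$ forces $\A\setminus\A_e$ to consist of proximal elements, and if we had $\A=\A_e$ then every $B\in\SS(\A)=\SS(\A_e)$ would satisfy $B\CC=\CC$, contradicting the existence of a proximal $B\in\SS(\A)$ (guaranteed by the unstable multicone hypothesis) which by (i) and (ii) satisfies $u(B)\in\CC^o$ and $s(B)\notin\CC$, so that $B^n\CC$ contracts to $\{u(B)\}$.

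\textbf{Part (2).} The same contraction argument rules out any proximal element in $\SS(\A_e)$, so every element of $\SS(\A_e)$ is conformal. I pass to $PGL_2(\R)\cong\mathrm{Isom}(\mathbb{H}^2)$: each conformal matrix projects to an isometry fixing a point of $\mathbb{H}^2$, and a semigroup of such isometries whose elements do not all share a common fixed point must contain hyperbolic (proximal) elements (as one can see from the trace formula $\tr(A^iB^j)=2[\cos(i\alpha/2)\cos(j\beta/2)-\sin(i\alpha/2)\sin(j\beta/2)\cosh d]$ by choosing $i,j$ with the cosines small). Thus all elements of $\A_e$ fix a common point of $\mathbb{H}^2$, equivalently share a common conjugation $M$. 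After conjugating by $M$ and normalising by $|\det A|^{1/2}$, $\A_e$ embeds into $O(2)$ and lies in the stabiliser of the transplanted multicone $M\CC$; this stabiliser is finite because the $O(2)$-symmetry group of a proper finite union of closed projective intervals is finite, yielding the finiteness assertion.

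\textbf{Moreover.} Let $H\subset GL_2(\R)$ denote the conjugate through $M^{-1}$ of the finite $O(2)$-subgroup produced in Part~(2), so that $H$ is finite and $h\CC=\CC$ for every $h\in H$. Form the compact proximal $H$-conjugation-invariant family $\widetilde{\A}:=\{hAh^{-1}:h\in H,\,A\in\A\setminus\A_e\}$, which still preserves $\CC$. I claim $\widetilde{\A}$ is dominated: compactness and proximality provide a uniform spectral ratio $|\lambda_s(A)/\lambda_u(A)|\le\tau<1$, while conditions (i)--(iii) together with compactness yield positive separation of $\{u(A):A\in\widetilde{\A}\}$ from $\partial\CC$ and of $\{s(A):A\in\widetilde{\A}\}$ from $\CC$, producing a uniform projective contraction on $\CC$ (e.g.\ in a Hilbert-type metric) whose iterates give the exponential decay $|\det(A_1\cdots A_n)|/\|A_1\cdots A_n\|^2\le C\tau^n$. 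Then \cite[Theorem~B]{BochiGourmelon09} supplies a strongly invariant multicone $\CC_1\subset\CC^o$ for $\widetilde{\A}$. Symmetrising via $\CC':=\bigcap_{h\in H}h\CC_1$, the $H$-conjugation invariance of $\widetilde{\A}$ gives $B\CC'=\bigcap_h h(h^{-1}Bh)\CC_1\subset\bigcap_h h\CC_1^o=(\CC')^o$ for every $B\in\A\setminus\A_e$, and the $H$-invariance of $X_u(\widetilde{\A})\subset\CC_1^o$ shows $\CC'$ is a nonempty multicone; finally, the $H$-invariance of $\CC'$ combined with the fact that every $A\in\A_e$ acts projectively as an element of $H$ yields $A\CC'=\CC'$ for all $A\in\A_e$.

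The principal obstacle is the domination step for $\widetilde{\A}$: since the starting multicone $\CC$ is only invariant rather than strongly invariant, one must exploit the uniform separations guaranteed by (i)--(iii) and the compactness of $\A\setminus\A_e$ to manufacture the required uniform spectral gap. Every other step is either algebraic bookkeeping or an appeal to a standard fact about isometries of $\mathbb{H}^2$ or the Bochi--Gourmelon theorem.
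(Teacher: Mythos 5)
Your Parts (1) and (2) are essentially sound, though Part (2) travels a noticeably different road than the paper. Where the paper argues via the action of $\mathcal{F}(\A)$ on the finite set $\partial\CC$ (elements inducing the same permutation of $\partial\CC$ differ by $\pm I$, so $\mathcal{F}(\A)$ injects into a finite permutation group), you pass to $\mathrm{Isom}(\mathbb{H}^2)$ and deduce a common fixed point from the absence of hyperbolics. That is a legitimate alternative, but your one-line trace identity does not by itself rule out elliptics with rational rotation angles (where $\cos(i\alpha/2)$ cannot be made small); closing it requires a case analysis or an appeal to discreteness/Lie-closure. The paper's permutation argument bypasses this entirely and I would regard it as both shorter and safer, but your structural conclusion is correct.

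The genuine gap is in the \emph{Moreover} step, precisely where you flag ``the principal obstacle.'' You assert that the positive separation of $\{u(A):A\in\widetilde{\A}\}$ from $\partial\CC$ and of $\{s(A):A\in\widetilde{\A}\}$ from $\CC$ yields ``a uniform projective contraction on $\CC$ (e.g.\ in a Hilbert-type metric) whose iterates give the exponential decay.'' This is not true as stated: $\CC$ is only \emph{invariant}, not strongly invariant, under an individual $A\in\A\setminus\A_e$, and a single such $A$ may send boundary points of $\CC$ to other boundary points of $\CC$ (it cannot fix one, since its two fixed points are $u(A)\in\CC^o$ and $s(A)\notin\CC$, but it can permute them across different connected components of $\CC$). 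When $A\CC\cap\partial\CC\neq\emptyset$, the Hilbert diameter of $A\CC$ in $\CC$ is infinite and there is no contraction after one step; a uniform spectral gap for individual matrices does not repair this. What is actually needed is a bounded-length argument: the paper's Lemma~\ref{thm:dom-lemma4} shows by pigeonhole on pairs in $\partial\CC\times\partial\CC$ that any product $A_1F_1\cdots A_nF_n$ with $n\geq(\#\partial\CC)^2+1$, $A_i\in\A\setminus\A_e$, $F_i\in\mathcal{F}(\A)$, maps $\CC$ strictly into $\CC^o$ (else one obtains a nontrivial product fixing a boundary point, which is impossible since $\partial\CC\cap(X_u\cup X_s)=\emptyset$ and non-conformal products strictly shrink $\CC$). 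Only for products of that fixed length does one get the uniform projective contraction, and then Bochi--Gourmelon applies; this is exactly Lemma~\ref{lem:fin}. Your symmetrisation $\CC'=\bigcap_{h\in H}h\CC_1$ and the conclusion $A\CC'=\CC'$ for $A\in\A_e$ are fine once that lemma is in hand (and parallel the paper's $\CC=\bigcup_{F\in\mathcal{F}(\A)}F\CC_0$), but as written the domination step is a hole that the proposed Hilbert-metric heuristic does not fill.
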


\begin{proposition}\label{prop:converse}
  Let $\mathsf{A}_e, \mathsf{A}_h \subset GL_2(\mathbb{R})$ be such that
  \begin{enumerate}
    \item $\mathsf{A}_h$ is nonempty, compact, and has a strongly invariant multicone $\CC$,
    \item $\A_e$ is strongly conformal and $A\CC=\CC$ for all $A\in\A_e$.
  \end{enumerate}
  Then $\mathcal{S}(\mathsf{A}_e\cup\mathsf{A}_h)$ is almost multiplicative.
\end{proposition}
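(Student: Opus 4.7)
My plan splits into three ingredients: a quasi-similarity estimate for $\mathcal{S}(\A_e)$, a uniform cone-expansion lemma for products containing an $\A_h$-factor, and a case split combining them to bound $\|AB\|/(\|A\|\|B\|)$ from below.

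First, set $c_1:=\|M\|\|M^{-1}\|$ for the common conjugating matrix $M$ of $\A_e$. Every $E\in\mathcal{S}(\A_e)$ has the form $E=M^{-1}(|\det E|^{1/2}U)M$ with $U\in O(2)$, yielding $c_1^{-1}|\det E|^{1/2}\|v\|\le\|Ev\|\le c_1|\det E|^{1/2}\|v\|$ for every $v\in\R^2$, and hence $\|Ev\|\ge c_1^{-2}\|E\|\|v\|$. Since $E\CC=\CC$ and $E$ is invertible and continuous, each such $E$ also preserves $\CC^o$. For $A,B\in\mathcal{S}(\A_e)$, the multiplicativity of the determinant immediately gives $\|AB\|\ge c_1^{-1}|\det A|^{1/2}|\det B|^{1/2}\ge c_1^{-3}\|A\|\|B\|$, settling this case.

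Second, the key new ingredient is a cone-expansion lemma: there exists $\delta_0>0$ such that $\|Pv\|\ge\delta_0\|P\|$ for every unit $v$ with direction in $\CC$ and every $P\in\mathcal{S}(\A_e\cup\A_h)$ containing at least one $\A_h$-factor. By the SVD one has $\|Pv\|/\|P\|\ge|\sin\angle(v,v_2(P))|$, so it is enough to show that the small singular direction $v_2(P)$ lies uniformly outside $\CC$ projectively. The main input is that $\A_h$ is dominated (by Bochi--Gourmelon, from the strongly invariant multicone), which forces $v_2(P)$ to be exponentially close to the stable eigenvector $s(P)\in X_s(\A_h)$ in the number of $\A_h$-factors; since $X_s(\A_h)$ is closed and disjoint from $\CC$ by strong invariance, this handles long word-length products. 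Short word-length products are controlled by compactness of $\A_h$ together with a direct projective argument ruling out $v_2(A)\in\CC$ for a single $A\in\A_h$ with $A\CC\subset\CC^o$, and the $c_1$-distortion estimate from the first step shows that interspersed $\A_e$-factors alter singular directions by at most a bounded amount.

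Third, assuming the cone-expansion lemma, I obtain almost multiplicativity as follows. Fix any unit $v\in\CC$; set $\delta_1:=\min(c_1^{-2},\delta_0)$. Whether $B\in\mathcal{S}(\A_e)$ (use the first step) or $B$ involves an $\A_h$-factor (use the cone-expansion lemma), one has $\|Bv\|\ge\delta_1\|B\|$ and $Bv$ still has direction in $\CC$ (since $B\CC\subset\CC$ in either case, using $B\CC\subset\CC^o$ in the second). Applying the same dichotomy to $A$ acting on $Bv$, one gets $\|ABv\|\ge\delta_1\|A\|\|Bv\|\ge\delta_1^2\|A\|\|B\|$. Combining with the pure-$\A_e$ case, one concludes $\|AB\|\ge\kappa\|A\|\|B\|$ with $\kappa:=\min(c_1^{-3},\delta_1^2)$.

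The hard part will be the cone-expansion lemma, and specifically the uniform assertion that $v_2(P)$ is projectively bounded away from $\CC$ for every relevant $P$. The long-product limit is essentially classical dominated-cocycle theory, but uniformity across all products---including those with few $\A_h$-factors interleaved with many $\A_e$-factors---requires a careful interplay between the exponential gap provided by Bochi--Gourmelon domination of $\A_h$, the bounded distortion coming from the strongly conformal structure of $\A_e$, and a compactness argument to treat short word lengths.
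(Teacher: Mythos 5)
Your overall plan follows the same skeleton as the paper: dispose of the pure-$\A_e$ case separately, then for products containing an $\A_h$-factor reduce to a uniform bound of the form $\|Pv\|\geq\delta\|P\|$ for $v$ in an appropriate cone. The paper obtains that bound by (i) constructing an explicit auxiliary multicone $\CC_0\subset\CC^o$, namely a union of small neighbourhoods of $\bigcup_{A\in\A_h}A\CC$ pushed around by the finite group $\mathcal{F}(\A)$ so that $\CC_0$ is $\A_e$-invariant and $A\CC\subset\CC_0$ for every $A\in\A_h$, and then (ii) quoting the Bochi--Morris estimate (Lemma~\ref{lem:Morris} in the paper, \cite[Lemma~2.2]{BochiMorris15}) with $V\in\CC_0$. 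This immediately yields $\|AB\|\geq\|A|BV\|\,\|B|V\|\geq\kappa_0^2\|A\|\|B\|$ for mixed products and requires no analysis of singular or eigen-directions at all.

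The genuine gap in your proposal is the proof of your ``cone-expansion lemma.'' You state it for all $v$ whose direction lies in $\CC$, which is strictly stronger than the Bochi--Morris statement used by the paper (that requires $V\in\CC_0$, the \emph{smaller} cone). The statement for $v\in\CC$ is in fact true here, but it does not come for free: it follows from Bochi--Morris applied to a slightly enlarged, $\A_e$-invariant multicone $\CC_{+\delta}\supset\CC$ together with a compactness argument providing $\delta>0$; you never perform this enlargement. More seriously, the argument you offer for the lemma has several unpatched steps. You assert that $v_2(P)$ is exponentially close to $s(P)\in X_s(\A_h)$; but for a mixed product $P\in\mathcal{S}(\A_e\cup\A_h)$ its stable eigendirection need not lie in $X_s(\A_h)$ (which by definition only sees products formed from $\A_h$), and the disjointness of the relevant closure $X_s(\A_e\cup\A_h)$ from $\CC$ is itself something that needs to be established from the hypotheses. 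You also rely on the ``exponential closeness in the number of $\A_h$-factors,'' which degenerates precisely in the regime with few $\A_h$-factors and many $\A_e$-factors; you acknowledge this as ``the hard part'' but do not resolve it, and the claim that conformal factors perturb singular directions ``by at most a bounded amount'' would itself require a nontrivial perturbation estimate ($\A_e$-elements have bounded condition number $\le c_1^2$, but they can rotate singular directions arbitrarily). By contrast, the paper's construction of the $\A_e$-invariant $\CC_0$ sidesteps every one of these difficulties: once one knows $P\CC\subset\CC_0$ for \emph{every} mixed $P$ (which is immediate from $A\CC\subset\CC_0$ for $A\in\A_h$ and $A\CC_0=\CC_0$ for $A\in\A_e$), the Bochi--Morris lemma does all the remaining work uniformly, with no word-length or few-vs-many-factor case distinction.

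To repair your argument, either carry out the construction of the $\A_e$-invariant $\CC_0$ and apply Bochi--Morris with $V\in\CC_0$ (as the paper does), or construct a slightly larger $\A_e$-invariant multicone $\CC'\supset\CC$ with $P\CC'\subset\CC$ for every mixed $P$ and apply Bochi--Morris with $\CC_0'=\CC\subset(\CC')^o$; the latter makes your statement for $v\in\CC$ literally correct and eliminates the need for any analysis of $v_2(P)$ or $s(P)$.
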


The previous three statements have two immediate corollaries. The first one studies the case where $\A$ contains only proximal elements. The second one is for finite collections.

\begin{corollary}\label{thm:dom-cor1}
  If $\mathsf{A}\subset GL_2(\mathbb{R})$ is compact, then the following two statements are equivalent:
  \begin{enumerate}
    \item $\mathsf{A}$ has a strongly invariant multicone,
    \item $\mathsf{A}$ contains only proximal elements and $\mathcal{S}(\mathsf{A})$ is almost multiplicative.
  \end{enumerate}
\end{corollary}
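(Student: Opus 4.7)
The plan is to deduce both implications by specialising Theorem \ref{thm:justdomin} and Propositions \ref{prop:connect} and \ref{prop:converse} to the degenerate case in which the set of conformal elements of $\A$ is empty. The case $\A=\emptyset$ is vacuous, so I assume $\A$ is nonempty throughout.

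For the implication $(1)\Rightarrow(2)$, assume $\A$ has a strongly invariant multicone $\CC$. Taking $\A_e=\emptyset$ (which is vacuously strongly conformal and vacuously satisfies $A\CC=\CC$) and $\A_h=\A$ in Proposition \ref{prop:converse} immediately yields that $\mathcal{S}(\A)$ is almost multiplicative. For proximality, fix any $A\in\A$ and observe that the singleton $\{A\}$ inherits $\CC$ as a strongly invariant multicone. By the Bochi--Gourmelon theorem recalled in Section \ref{sec:subdom}, $\{A\}$ is dominated, so $|\det A^n|/\|A^n\|^2\le C\tau^n$, which is to say that $\sigma_2(A^n)/\sigma_1(A^n)$ decays exponentially in $n$. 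Since $\sigma_1(A^n)^{1/n}$ and $\sigma_2(A^n)^{1/n}$ converge to the moduli of the two eigenvalues of $A$, those moduli must be distinct; in particular the eigenvalues cannot form a complex conjugate pair, so $A$ has two real eigenvalues of distinct absolute value and is therefore proximal.

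For the implication $(2)\Rightarrow(1)$, assume $\A$ is compact, consists only of proximal matrices, and $\mathcal{S}(\A)$ is almost multiplicative. Theorem \ref{thm:justdomin} offers two alternatives. The first alternative, strong conformality, requires every element of $\A$ to be conformal, i.e.\ to have eigenvalues of equal modulus, which is incompatible with proximality; since $\A$ is nonempty, that alternative is excluded. Hence the second alternative holds: $\A$ has an invariant unstable multicone and $\mathcal{S}(\A)$ contains no parabolic elements. These are precisely the hypotheses of Proposition \ref{prop:connect}. The set $\A_e$ of conformal elements of $\A$ is empty by proximality, so $\A\setminus\A_e=\A$ is compact by hypothesis. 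The final clause of Proposition \ref{prop:connect} therefore produces a strongly invariant multicone for $\A$, completing the argument.

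The only step I expect to need any care is extracting proximality of each $A\in\A$ from the strongly invariant multicone in $(1)\Rightarrow(2)$; specifically, one must rule out a complex conjugate pair of eigenvalues, which is why I prefer the singular-value/determinant estimate coming from domination over a direct projective-contraction argument. Everything else is bookkeeping on top of the three main results.
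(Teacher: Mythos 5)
Your proof is correct and follows exactly the route the paper intends: the authors present this corollary as an ``immediate'' consequence of Theorem~\ref{thm:justdomin} and Propositions~\ref{prop:connect} and~\ref{prop:converse}, and your argument is precisely the specialisation of those three results to the case $\A_e=\emptyset$, together with the standard observation (via Bochi--Gourmelon or equivalently via the singular-value ratio) that a single matrix with a strongly invariant multicone must be proximal.
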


\begin{corollary}\label{thm:justdomin2}
  If $\mathsf{A}\subset GL_2(\mathbb{R})$ is finite, then the following two statements are equivalent:
  \begin{enumerate}
  \item\label{it:domin1} the sub-semigroup $\mathcal{S}(\mathsf{A})$ is almost multiplicative,
  \item $\A$ can be decomposed into two sets $\A_e$ and $\A_h$ such that $\A_e$ is strongly conformal and if $\A_h\neq\emptyset$, then $\mathsf{A}_h$ has a strongly invariant multicone $\CC$ such that $A\CC=\CC$ for all $A\in\A_e$.
  \end{enumerate}
\end{corollary}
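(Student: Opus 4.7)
The plan is to derive Corollary \ref{thm:justdomin2} by bundling together Theorem \ref{thm:justdomin} with Propositions \ref{prop:connect} and \ref{prop:converse}. The finiteness hypothesis has exactly one role: to supply, for free, the compactness of $\mathsf{A}\setminus\mathsf{A}_e$ needed in the last clause of Proposition \ref{prop:connect}.

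For the implication (1) $\Rightarrow$ (2) I would start by applying Theorem \ref{thm:justdomin}. In the strongly conformal alternative set $\mathsf{A}_e = \mathsf{A}$ and $\mathsf{A}_h = \emptyset$. Otherwise $\mathsf{A}$ has an invariant unstable multicone and $\mathcal{S}(\mathsf{A})$ contains no parabolic elements; take $\mathsf{A}_e$ to be the subset of conformal matrices in $\mathsf{A}$ and $\mathsf{A}_h = \mathsf{A}\setminus\mathsf{A}_e$. Part (\ref{item:that}) of Proposition \ref{prop:connect} guarantees $\mathsf{A}_h\neq\emptyset$ and contains only proximal matrices, while part (\ref{item:this}) gives that $\mathsf{A}_e$ is strongly conformal. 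Since $\mathsf{A}$ is finite, so is $\mathsf{A}_h$, hence compact, and the final sentence of Proposition \ref{prop:connect} produces the desired strongly invariant multicone $\mathcal{C}$ with $A\mathcal{C} = \mathcal{C}$ for all $A \in \mathsf{A}_e$.

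For the implication (2) $\Rightarrow$ (1) there are two cases. If $\mathsf{A}_h \neq \emptyset$, then (using finiteness for compactness again) the hypotheses of Proposition \ref{prop:converse} are exactly met, and we are done. If $\mathsf{A}_h = \emptyset$, then $\mathsf{A} = \mathsf{A}_e$ is strongly conformal, and I would verify almost multiplicativity directly. Fix a common conjugation matrix $M$. Since every $B \in \mathcal{S}(\mathsf{A})$ is conformal with conjugation $M$, we have $MBM^{-1} = |\det(B)|^{1/2} O_B$ for some $O_B \in O(2)$, so $\|MBM^{-1}\| = |\det(B)|^{1/2}$. For $B_1,B_2 \in \mathcal{S}(\mathsf{A})$ this yields the exact identity $\|M(B_1B_2)M^{-1}\| = \|MB_1M^{-1}\|\,\|MB_2M^{-1}\|$ by multiplicativity of the determinant, and transferring back through $M$ costs only the multiplicative constant $\kappa = (\|M\|\,\|M^{-1}\|)^{-2}$, giving $\|B_1B_2\| \geq \kappa\|B_1\|\,\|B_2\|$.

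All the substance of the corollary is already carried by Theorem \ref{thm:justdomin} and Proposition \ref{prop:connect}; once those are in place the argument is pure assembly. The only point requiring any care at all is the edge case $\mathsf{A}_h = \emptyset$ in the reverse direction, which falls outside the scope of Proposition \ref{prop:converse} and must be handled by the short direct calculation above.
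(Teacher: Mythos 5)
Your proof is correct and follows the same route the paper intends: the corollary is asserted to be ``immediate'' from Theorem~\ref{thm:justdomin} together with Propositions~\ref{prop:connect} and~\ref{prop:converse}, with finiteness used only to obtain compactness of $\A_h = \A\setminus\A_e$, exactly as you argue. You are right to single out the edge case $\A_h=\emptyset$ in the reverse direction, since Proposition~\ref{prop:converse} assumes $\A_h$ nonempty; your direct check of almost multiplicativity for a strongly conformal $\A$ is the appropriate fix (and the same idea already appears inside the proof of Lemma~\ref{lem:onlyel}). The only blemish is the final constant: transporting the exact multiplicativity of $\|M\cdot M^{-1}\|$ back to the operator norm costs one factor of $\|M\|\|M^{-1}\|$ for each of $\|B_1\|$, $\|B_2\|$ and $\|B_1B_2\|$, so the correct bound is $\kappa=(\|M\|\|M^{-1}\|)^{-3}$ rather than $(\|M\|\|M^{-1}\|)^{-2}$; this is immaterial to the conclusion.
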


\subsection{Thermodynamic formalism}

If the set $\mathsf{A} \subset GL_2(\R)$ is finite, then it makes sense to consider thermodynamic formalism for matrix cocycles. In this context, it is rather standard practise to use separate alphabet to index the elements in the sub-semigroup.

Let $N \ge 2$ be an integer and $\Sigma = \{ 1,\ldots,N \}^\N$ be the collection of all infinite words obtained from integers $\{ 1,\ldots,N \}$. We denote the left shift operator by $\sigma$ and equip $\Sigma$ with the product discrete topology. The \emph{shift space} $\Sigma$ is clearly compact. If $\iii = i_1i_2\cdots \in \Sigma$, then we define $\iii|_n = i_1 \cdots i_n$ for all $n \in \N$. The empty word $\iii|_0$ is denoted by $\varnothing$. Define $\Sigma_n = \{ \iii|_n : \iii \in \Sigma \}$ for all $n \in \N$ and $\Sigma_* = \bigcup_{n \in \N} \Sigma_n \cup \{ \varnothing \}$. Thus $\Sigma_*$ is the collection of all finite words. The length of $\iii \in \Sigma_* \cup \Sigma$ is denoted by $|\iii|$. If $\iii \in \Sigma_n$ for some $n$, then we set $[\iii] = \{ \jjj \in \Sigma : \jjj|_n = \iii \}$. The set $[\iii]$ is called a \emph{cylinder set}. Cylinder sets are open and closed and they generate the Borel $\sigma$-algebra.

The longest common prefix of $\iii,\jjj \in \Sigma_*
\cup \Sigma$ is denoted by $\iii \wedge \jjj$. The concatenation of two words $\iii \in \Sigma_*$ and $\jjj \in \Sigma_* \cup \Sigma$ is denoted by $\iii\jjj$. If $A \subset \Sigma$ and $\iii \in \Sigma_*$, then $\iii A = \{ \iii\jjj : \jjj \in A \}$. For example, if $\iii,\jjj \in \Sigma_*$, then $[\iii\jjj] = \iii[\jjj] = \iii\jjj\Sigma$. If $\iii \in \Sigma_*$ and $n \in \N$, then by $\iii^n$ we mean the concatenation $\iii\cdots\iii$ where $\iii$ is repeated $n$ times. Finally, denote by $\sharp_k\iii$ the number of appearances of the symbol $k \in \{ 1,\ldots,N \}$ in $\iii \in \Sigma_*$, i.e.\ $\sharp_k\iii=\sharp\{n:i_n=k\text{ for }n \in \{1,\ldots|\iii|\}\}$.

We say that the sequence $\Phi = (\phi_n)_{n \in \N}$ of functions $\phi_n \colon \Sigma \to \R$ is \emph{sub-additive} if there exists $C_1 \ge 0$ such that
\begin{equation*}
  \phi_{n+m}(\iii) \le \phi_n(\iii) + \phi_m(\sigma^n\iii) + C_1
\end{equation*}
for all $n,m \in \N$ and $\iii \in \Sigma$. A sub-additive sequence $\Phi = (\phi_n)_{n \in \N}$ is \emph{almost-additive} if there exists $C_2 \ge 0$ such that
\begin{equation*}
  \phi_{n+m}(\iii) \ge \phi_n(\iii) + \phi_m(\sigma^n\iii) - C_2
\end{equation*}
for all $n,m \in \N$ and $\iii \in \Sigma$. Finally, we say that an almost-additive sequence $\Phi$ is \emph{additive} if the constants $C_1$ and $C_2$ in the above inequalities can be chosen to $0$. For example, if $\phi \colon \Sigma \to \R$ is a function, then $(\sum_{k=0}^{n-1} \phi \circ \sigma^k)_{n \in \N}$ is additive. In this context, the function $\phi$ is called a \emph{potential}. We say that a potential $\phi$ is \emph{H\"older continuous}, if there exist $C>0$ and $0<\tau<1$ such that
$$
  |\phi(\iii)-\phi(\jjj)|\leq C\tau^{|\iii\wedge\jjj|}.
$$
for all $\iii,\jjj\in\Sigma$.

If $\Phi = (\phi_n)_{n \in \N}$ is sub-additive, then the \emph{pressure} of $\Phi$ is defined by
\begin{equation}\label{eq:pressure}
  P(\Phi) = \lim_{n \to \infty} \tfrac{1}{n} \log \sum_{\iii \in \Sigma_n} \exp\max_{\jjj \in [\iii]}\phi_n(\jjj).
\end{equation}
The limit above exists by the standard properties of sub-additive sequences. Let $\mu$ be a $\sigma$-invariant probability measure on $\Sigma$ and recall that the \emph{Kolmogorov-Sinai entropy} of $\mu$ is
\begin{equation*}
  h_\mu = -\lim_{n \to \infty} \tfrac{1}{n} \sum_{\iii \in \Sigma_n} \mu([\iii]) \log\mu([\iii]).
\end{equation*}
In addition, if $\Phi = (\phi_n)_{n \in \N}$ is a sub-additive sequence, then we set
\begin{equation*}
  \Lambda_\mu(\Phi) = \lim_{n \to \infty} \tfrac{1}{n} \int_\Sigma \phi_n(\iii) \,\mathrm{d}\mu(\iii).
\end{equation*}
It is easy to see that
\begin{equation*}
  P(\Phi) \ge h_\mu + \Lambda_\mu(\Phi)
\end{equation*}
for all $\sigma$-invariant probability measures $\mu$. The variational principle
\begin{equation*}
  P(\Phi) =\sup\left\{ h_\mu + \Lambda_\mu(\Phi):\mu\text{ is $\sigma$-invariant and }\Lambda_\mu(\Phi)\neq-\infty\right\}
\end{equation*}
is proved in \cite{CaoFengHuang}. For matrix cocycles this was obtained earlier in \cite{Kaenmaki2004}. A $\sigma$-invariant measure $\mu$ satisfying
\begin{equation*}%\label{eq:eqstate}
  P(\Phi) = h_\mu + \Lambda_\mu(\Phi)
\end{equation*}
is called an \emph{equilibrium state} for $\Phi$. Such a measure always exists in the context of matrix cocycles, but it is not known if a general sub-additive sequence has an equilibrium state; see \cite{Barreira2010}.

We say that a probability measure $\mu$ on $\Sigma$ is \emph{quasi-Bernoulli} if there exists a constant $C\ge 1$ such that
$$
  C^{-1}\mu([\iii])\mu([\jjj])\leq\mu([\iii\jjj])\leq C\mu([\iii])\mu([\jjj])
$$
for all $\iii,\jjj\in\Sigma_*$. If the constant $C$ above can be chosen to $1$, then $\mu$ is a \emph{Bernoulli} measure. In other words, a probability measure $\mu$ is Bernoulli if there exist a probability vector $(p_1,\ldots,p_N)$ such that
$$
  \mu([\iii])=p_{i_1}\cdots p_{i_{n}}
$$
for all $\iii=i_1\cdots i_n\in\Sigma_n$ and $n \in \N$.

Let $\phi \colon \Sigma \to \R$ be a continuous potential and $\Phi = (\sum_{k=0}^{n-1} \phi \circ \sigma^k)_{n \in \N}$. We say that a Borel probability measure $\mu$ on $\Sigma$ is a \emph{Gibbs measure} for $\phi$ if there exists a constant $C \ge 1$ such that
\begin{equation}\label{eq:gibbsmeas}
  C^{-1}\exp\biggl(-nP(\Phi) + \sum_{k=0}^{n-1} \phi(\sigma^k(\jjj))\biggr) \le \mu([\iii]) \le C\exp\biggl(-nP(\Phi) + \sum_{k=0}^{n-1} \phi(\sigma^k(\jjj))\biggr)
\end{equation}
for all $\iii \in \Sigma_n$, $\jjj \in [\iii]$, and $n \in \N$. For example, the Bernoulli measure obtained from a probability vector $(p_1,\ldots,p_N)$ is a Gibbs measure for the potential $\iii\mapsto\log p_{\iii|_1}$. If $\phi$ is H\"older continuous, then there is unique $\sigma$-invariant Gibbs measure which also is unique equilibrium state; see \cite[Theorems 1.4 and 1.22]{Bowen}.

Similarly, if $\Phi = (\phi_n)_{n \in \N}$ is sub-additive, then a Borel probability measure $\mu$ on $\Sigma$ is a \emph{Gibbs-type measure} for $\Phi$ if there exists a constant $C \ge 1$ such that
\begin{equation}\label{eq:Gibbstype}
  C^{-1}\exp\biggl(-nP(\Phi) + \phi_n(\jjj)\biggr) \le \mu([\iii]) \le C\exp\biggl(-nP(\Phi) + \phi_n(\jjj)\biggr)
\end{equation}
for all $\iii \in \Sigma_n$, $\jjj \in [\iii]$, and $n \in \N$. It is easy to see that a $\sigma$-invariant Gibbs-type measure is ergodic and hence the unique equilibrium state; see \cite[\S 3.2]{KaenmakiReeve2014}. If $\Phi$ is almost-additive, then, similarly as with continuous potentials, there exist conditions to guarantee the existence of a $\sigma$-invariant Gibbs-type; see \cite[\S 4.2]{Barreira2010}.

Our main objective is to study thermodynamic formalism in the setting of matrix cocycles. Let $\mathsf{A} = (A_1,\ldots,A_N) \in GL_2(\mathbb{R})^N$, $s > 0$, and define $\phi_n^s \colon \Sigma \to \R$ for all $n \in \N$ by setting $\phi_n^s(\iii) = \log\| A_{\iii|_n} \|^s$, where $A_\iii = A_{i_1} \cdots A_{i_n}$ for all $\iii = i_1 \cdots i_n \in \Sigma_n$ and $n \in \N$. Then the sequence $\Phi^s = (\phi_n^s)_{n \in \N}$ parametrised by $s > 0$ is sub-additive. By \cite[Theorems~2.6 and 4.1]{Kaenmaki2004}, for every choice of the matrix tuple $\A$, there exists an ergodic equilibrium state for $\Phi^s$. The structure of the set of all equilibrium states for $\Phi^s$ is well known. We say that $\A = (A_1,\ldots,A_N) \in GL_2(\R)^N$ is \emph{irreducible} if there does not exist $1$-dimensional linear subspace $V$ such that $A_iV=V$ for all $i \in \{ 1,\ldots,N \}$; otherwise $\A$ is \emph{reducible}. In a reducible tuple $\A$, all the matrices are simultaneously upper triangular in some basis. If $\A$ is irreducible, then there is unique equilibrium state which is a Gibbs-type measure for $\Phi^s$; see \cite[Proposition 1.2]{FengKaenmaki2011}. It is worthwhile to remark that irreducibility does not imply that $\Phi^s$ is almost-additive. In the reducible case, there can be two distinct ergodic equilibrium states; see \cite[Theorem 1.7]{FengKaenmaki2011}. Recall also that the set $\{ \A \in GL_2(\R)^N : \A \text{ is irreducible} \}$ is open, dense, and of full Lebesgue measure in $GL_2(\R)^N$. In fact, the complement of the set is a finite union of $(4N-1)$-dimensional algebraic varieties; see \cite[Propositions 3.4 and 3.6]{KaenmakiLi2017}.

%The tuple $\A$ is \emph{strongly irreducible} if in the definition of irreducibility $V$ can be replaced by a finite union of $1$-dimensional linear subspaces.

The following four results characterise different kind of properties equilibrium states for $\Phi^s$ can have by means of the matrix tuple.

\begin{proposition}\label{prop:gibbstype}
  If $\A=(A_1,\ldots,A_N)\in GL_2(\R)^N$ and $\mu$ is an ergodic equilibrium state for $\Phi^s$, then the following two statements are equivalent:
  \begin{enumerate}
    \item\label{it:gibbstype1} $\mu$ is a Gibbs-type measure for $\Phi^s$,
    \item\label{it:gibbstype2} at least one of the following three conditions hold:
    \begin{enumerate}
    	\item\label{casea} $\A$ is irreducible,
    	\item\label{caseb} $\A$ is strongly conformal,
    	\item\label{casec} $\A$ is reducible with a common invariant subspace $V$ and there exists $\varepsilon>0$ such that either the closed $\varepsilon$-neighbourhood of $V$ or the closure of its complement is an invariant unstable multicone.
    \end{enumerate}
  \end{enumerate}
\end{proposition}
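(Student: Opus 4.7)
The plan is to prove the two implications separately.

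For the direction $(2)\Rightarrow(1)$, I handle the three subcases. In case (a) this is exactly \cite[Proposition~1.2]{FengKaenmaki2011}. In case (b), a common conjugation matrix $M$ gives $\|MAM^{-1}\|=|\det A|^{1/2}$ for every $A\in\A$, so $\|A_\iii\|\asymp|\det A_\iii|^{1/2}$ uniformly in $\iii\in\Sigma_*$. Thus $\phi_n^s$ agrees up to an additive constant with the Birkhoff sum of the locally constant (hence H\"older) potential $\psi(\iii)=(s/2)\log|\det A_{\iii|_1}|$, and Bowen's theorem \cite[Theorem 1.22]{Bowen} produces the unique Gibbs(-type) equilibrium state for $\Phi^s$. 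In case (c), choosing a basis so that $V=\linspan(e_1)$ brings each $A_i$ into upper-triangular form with diagonal entries $a_i,b_i$ and upper-right entry $c_i$. The induced projective action on the chart $\RP\setminus\{[1{:}0]\}\cong\R$ is the affine map $x\mapsto(a_i/b_i)x+c_i/b_i$, with $V$ corresponding to $\infty$; the hypothesis that the closed $\varepsilon$-neighbourhood of $V$ (respectively, of its complement) is an invariant unstable multicone forces the uniform gap $\inf_i|a_i/b_i|>1$ (resp.\ $<1$). A direct estimate of the upper-right entry of $A_\iii$ under this uniform ratio yields $\|A_\iii\|\asymp|a_\iii|$ (resp.\ $|b_\iii|$), reducing case (c) to case (b) applied to the additive H\"older potential $s\log|a|$ (resp.\ $s\log|b|$).

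For the reverse direction $(1)\Rightarrow(2)$, I argue by contradiction. The Gibbs-type lower bound immediately shows that $\mu$ has full support. Assuming that none of (a)--(c) holds, $\A$ is reducible with a common invariant subspace $V$, is not strongly conformal, and---writing the matrices in upper-triangular form with respect to $V$---fails the uniform dominance of one diagonal entry over the other. The classification of ergodic equilibrium states for $\Phi^s$ in the reducible case from \cite[Theorem~1.7]{FengKaenmaki2011} then identifies $\mu$, up to an explicit correspondence, with a Bowen--Gibbs measure for one of the additive H\"older potentials $s\log|a|$, $s\log|b|$ coming from the diagonal entries. Testing the Gibbs-type comparison $\mu([\iii])\asymp e^{-nP(\Phi^s)}\|A_\iii\|^s$ against explicit words produces a contradiction: in the mixed-dominance subcase the words $i_0^n$ and $j_0^n$ (with $|a_{i_0}|>|b_{i_0}|$ and $|a_{j_0}|<|b_{j_0}|$) would force the identity $|a_{j_0}|=|b_{j_0}|$ or its analogue at $i_0$; in the uniform-equality subcase ($|a_i|=|b_i|$ for all $i$ but not strongly conformal) either a parabolic element of $\A$ introduces a polynomial prefactor in $\|A_i^n\|$ incompatible with the pure exponential decay of $\mu([i^n])$, or two non-proportional non-parabolic upper-triangular elements have a parabolic product in $\SS(\A)$, reducing to the previous case applied to a longer cocycle.

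The main obstacle is the reverse direction. The sensitive point is tracking the precise $n$-dependence of $\mu([\iii])$ along the test words in the reducible regime: establishing that the polynomial and pure-exponential decay rates are incompatible, and invoking \cite[Theorem~1.7]{FengKaenmaki2011} to conclude that no other ergodic equilibrium state exists, is what singles out the $\varepsilon$-neighbourhood form in (c) as the exact necessary condition.
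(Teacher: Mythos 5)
Your overall architecture (two implications, sub-cases (a)–(c), and then a case analysis in upper-triangular form for the reverse direction) mirrors the paper's, and your (1)$\Rightarrow$(2) argument is essentially the paper's: invoke \cite[Theorem~1.7]{FengKaenmaki2011} (Lemma~\ref{lem:triang}) to identify $\mu$ with a diagonal Bernoulli measure, then test against $\iii^n$. However, there is a genuine gap in your case~(c) of (2)$\Rightarrow$(1). The invariance of the closed $\varepsilon$-neighbourhood of $V$ does \emph{not} force the uniform gap $\inf_i|a_i/b_i|>1$: case~(c) explicitly permits conformal elements of $\A$ whose projective action fixes the $\varepsilon$-neighbourhood of $V$ \emph{without} contraction, for which $|a_i/b_i|=1$. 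The correct consequence is only $|a_i/b_i|\ge1$, with equality forcing $c_i=0$. Your ``direct estimate'' of the upper-right entry of $A_\iii$ relies on the strict uniform ratio and would fail once such isometric factors are present in the product. The paper avoids this by invoking Proposition~\ref{prop:connect} (to split $\A$ into $\A_h$ with a strongly invariant multicone and a conformal part $\A_e$ fixing it) and Lemma~\ref{lem:Morris}, showing directly that $\|A_\iii\|\asymp|\lambda_u(A_\iii)|$ and that $|\lambda_u|$ is exactly multiplicative over $\SS(\A)$; this handles the conformal factors cleanly.

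A secondary point: in your (1)$\Rightarrow$(2) you split the non-conformal, non-mixed-dominance situation into ``uniform equality'' ($|a_i|=|b_i|$ for all $i$) and ``mixed dominance'', but this leaves out the situation where $|a_i|\ge|b_i|$ for all $i$ with equality only for some indices. Moreover, within ``uniform equality'' your claim that ``two non-proportional non-parabolic upper-triangular elements have a parabolic product'' is true for a pair of conformal upper-triangular matrices only when they are not simultaneously diagonalisable, and you do not justify that one of the configurations you are left with actually produces such a pair. The paper's own exposition of this dichotomy is also terse, but it has just ruled out parabolics in $\SS(\A)$ (not merely in $\A$) and tacitly allows itself a further change of basis making the conformal part orthogonal, after which the $\varepsilon$-neighbourhood of $V$ genuinely becomes invariant in the remaining subcase, landing you back in case~(c). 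In a finished write-up you would need to make this reduction explicit; as stated, your case split is not exhaustive.

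Finally, your cases~(a) and~(b) of (2)$\Rightarrow$(1) are fine and match the paper: (a) is \cite[Proposition~1.2]{FengKaenmaki2011}, and in (b) the equivalence $\|A_\iii\|\asymp|\det A_\iii|^{1/2}$ reduces the matter to Bowen's theory for the locally constant potential $\frac{s}{2}\log|\det A_{\iii|_1}|$.
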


Note that $\A$ can be both irreducible and strongly conformal and that neither condition imply each other.

\begin{proposition}\label{prop:bernoulli}
  If $\A=(A_1,\ldots,A_N)\in GL_2(\R)^N$ and $\mu$ is an ergodic equilibrium state for $\Phi^s$, then the following two statements are equivalent:
  \begin{enumerate}
  \item\label{it:bernoulli1} $\mu$ is a Bernoulli measure,
  \item\label{it:bernoulli2} $\A$ is reducible or $\mathsf{A}$ is strongly conformal.
  \end{enumerate}
\end{proposition}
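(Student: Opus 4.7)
The plan is to handle the two implications separately. The forward direction (2) $\Rightarrow$ (1) follows from standard results, while the reverse uses the multiplicativity of spectral radii that a Bernoulli Gibbs-type structure would force on $\mathcal{S}(\A)$.

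For (2) $\Rightarrow$ (1): if $\A$ is strongly conformal with common conjugation matrix $M$, each product factors as $A_\iii = M^{-1}\bigl(\prod_{k=1}^{n}|\det A_{i_k}|^{1/2}\bigr)O_\iii M$ with $O_\iii \in O(2)$, so $\phi_n^s$ differs by $O(1)$ from an additive locally constant potential whose unique equilibrium state is the Bernoulli measure with weights proportional to $|\det A_i|^{s/2}$. If instead $\A$ is reducible, then \cite[Theorem~1.7]{FengKaenmaki2011} identifies every ergodic equilibrium state as a Bernoulli measure arising from the diagonal entries of a common upper-triangular representation.

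For (1) $\Rightarrow$ (2) I argue by contradiction: assume $\A$ is irreducible and not strongly conformal while $\mu$ is a Bernoulli equilibrium state with weights $(p_1,\ldots,p_N)$. Irreducibility together with Proposition \ref{prop:gibbstype} gives $\|A_\iii\|^s \asymp e^{nP(\Phi^s)}\prod p_{i_k}$ uniformly in $\iii \in \Sigma_n$, which immediately implies that $\mathcal{S}(\A)$ is almost multiplicative. Since strong conformality is excluded, Theorem \ref{thm:justdomin} places us in case (\ref{thm:item2}): $\A$ has an invariant unstable multicone and $\mathcal{S}(\A)$ has no parabolic elements, so by Proposition \ref{prop:connect} the set $\A_h := \A\setminus\A_e$ is nonempty and consists only of proximal matrices while $\A_e$ is strongly conformal. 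Applying the Gibbs-type estimate to the repetition $\iii^n$ and taking $n$-th roots via Gelfand's formula gives $\rho(A_\iii)^s=e^{|\iii|P(\Phi^s)}\prod p_{i_k}$, whence
\[
  \rho(AB)=\rho(A)\rho(B)\qquad\text{for all }A,B\in\mathcal{S}(\A).
\]

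The main obstacle is converting this spectral multiplicativity into a common invariant line. The key lemma is: if $A,B\in\mathcal{S}(\A)$ are both proximal with $\rho(AB)=\rho(A)\rho(B)$, then $u(A)=u(B)$ or $s(A)=s(B)$. Working in the eigenbasis of $A=\mathrm{diag}(a,d)$ with $B=\begin{pmatrix}p & q \\ r & s\end{pmatrix}$ and eigenvalues $\alpha,\beta$ of $B$, the hypothesis forces the eigenvalues of $AB$ to be real of the form $\sigma\{a\alpha,d\beta\}$ for some $\sigma\in\{\pm 1\}$. Comparing the trace equation for $AB$ with that of $A^2B$ (whose eigenvalues must likewise be $\sigma'\{a^2\alpha,d^2\beta\}$) and invoking the strict inequalities $|a|>|d|$ and $|\alpha|>|\beta|$ coming from proximality eliminates every sign configuration except $\sigma=\sigma'=+1$; this remaining case yields $p=\alpha$, $s=\beta$ and $qr=0$, producing the shared eigenvector. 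A short pairwise case analysis then shows that all proximal elements of $\mathcal{S}(\A)$ share either a common top eigenvector $u_0$ or a common bottom eigenvector $s_0$. Assuming the former without loss of generality, for any $A\in\A_e$ and $A'\in\A_h$ the product $AA'$ has eigenvalues of unequal absolute values (since $A$ is conformal and $A'$ proximal) and hence, by the absence of parabolic elements, is proximal with $u(AA')=u_0=u(A')$; the identity $AA'u_0 = \lambda_u(A')Au_0 = \lambda_u(AA')u_0$ then forces $Au_0=\pm\rho(A)u_0$, so $u_0$ is an eigenvector of every $A\in\A$, yielding a common invariant line that contradicts irreducibility.
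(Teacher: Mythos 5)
Your proof is essentially correct and follows the same route as the paper's argument. For (1)$\Rightarrow$(2) both you and the paper proceed by: (i) Gibbs-type plus Bernoulli gives multiplicativity of spectral radii on $\mathcal{S}(\A)$ via Gelfand's formula; (ii) invoke Theorem~\ref{thm:justdomin} and Proposition~\ref{prop:connect} to split $\A=\A_h\cup\A_e$; (iii) derive a common unstable (or stable) eigendirection for the proximal elements from the multiplicativity, via a Protasov--Voynov-type trace argument; (iv) extend to the conformal elements and contradict irreducibility. Your ``key lemma'' is the pairwise content of the paper's Lemmas~\ref{lem:eigenspace} and \ref{lem:eigenspace2}, and your proof by comparing the linear system arising from $\tr(AB)$ and $\tr(A^2B)$ in the eigenbasis of $A$ is the same underlying calculation as the paper's, just packaged as a single direct computation rather than as a reduction of the $|\cdot|$-version to the sign-free version. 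The globalisation from pairwise to ``all proximal elements share $u_0$ or $s_0$'' is glossed over (``a short pairwise case analysis'') where the paper supplies an explicit induction, but the argument is standard.

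One step is mis-justified. In the final paragraph you assert that ``the product $AA'$ has eigenvalues of unequal absolute values (since $A$ is conformal and $A'$ proximal).'' That parenthetical reason is not valid: a conformal matrix times a proximal matrix can perfectly well be conformal (e.g.\ a rotation by $\pi/2$ times $\mathrm{diag}(2,1/2)$ gives a matrix with purely imaginary eigenvalues). The conclusion that $AA'$ is proximal is nonetheless true in your situation, and you have two correct reasons available that you should use instead: either the already-derived identity $\rho(AA')=\rho(A)\rho(A')$, which gives $\rho(AA')^2=\rho(A)^2\rho(A')^2>|\det A|\,|\det A'|=|\det(AA')|$ and hence two real eigenvalues of distinct moduli; or the geometric fact that $AA'\CC=A(A'\CC)\subset A\CC^o=\CC^o$, so $AA'$ has a strongly invariant multicone and is therefore dominated and proximal. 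With either substitution the final step is sound, and the rest of the proof stands.
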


In the previous two propositions, one has to assume that the equilibrium measure is ergodic; see \cite[Example 6.2]{KaenmakiVilppolainen2010} for a counter-example. We remark that the Bernoulli property has been studied earlier in \cite[Theorem 13]{Morris2016}.
%{\color{red}and ??? \cite[Proposition 5.1]{Morris2017} ???}.
Since the propositions give a complete characterisation of the properties in the reducible case, we can restrict our attention into irreducible matrix tuples.

\begin{theorem}\label{thm:fortuples}
  If $\A=(A_1,\ldots,A_N)\in GL_2(\R)^N$ is irreducible and $\mu$ is an equilibrium state for $\Phi^s$, then the following four statements are equivalent:
  \begin{enumerate}
    \item\label{thm:ftv} $\mu$ is a quasi-Bernoulli measure,
    \item\label{thm:ftvvv} $\mathcal{S}(\A)$ is almost multiplicative,
    \item\label{thm:fti} $\A$ can be decomposed into two sets $\A_e$ and $\A_h$ such that $\A_e$ is strongly conformal and if $\A_h\neq\emptyset$, then $\mathsf{A}_h$ has a strongly invariant multicone $\CC$ such that $A\CC=\CC$ for all $A\in\A_e$,
    \item\label{thm:ftvii} there exist a constant $C>0$ and a $\mu$-almost everywhere continuous potential $f\in L^1(\mu)$ such that
    \begin{equation}\label{eq:shadowing}
      \Biggl|\sum_{k=0}^{n-1}f(\sigma^k\iii)-\log\|A_{\iii|_n}\|\Biggr|\leq C
    \end{equation}
    for all $\iii \in \Sigma$ and $n \in \N$.
  \end{enumerate}
\end{theorem}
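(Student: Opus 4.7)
The plan is to route the four-way equivalence through almost multiplicativity of $\SS(\A)$. The equivalence (2) $\iff$ (3) is exactly Corollary \ref{thm:justdomin2}, since $\A$ is finite. For (1) $\iff$ (2): by irreducibility and Proposition \ref{prop:gibbstype}, the unique equilibrium state $\mu$ is a Gibbs-type measure for $\Phi^s$, so $\mu([\iota]) \asymp \exp(-|\iota|P(\Phi^s) + s\log\|A_\iota\|)$ for every cylinder; the quasi-Bernoulli property $\mu([\iii\jjj]) \asymp \mu([\iii])\mu([\jjj])$ then translates, after taking logarithms and dividing by $s$, directly into the almost multiplicative bound $\|A_{\iii\jjj}\| \asymp \|A_\iii\|\|A_\jjj\|$.

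For (4) $\implies$ (2): given $\iii,\jjj \in \Sigma_*$, apply the pointwise bound in (4) to the sequence $\iota = \iii\jjj\kkk$ (for an arbitrary $\kkk \in \Sigma$) at truncation lengths $|\iii|$ and $|\iii|+|\jjj|$, and to $\sigma^{|\iii|}\iota = \jjj\kkk$ at truncation $|\jjj|$. Since $(\iii\jjj\kkk)|_{|\iii|}=\iii$, $(\iii\jjj\kkk)|_{|\iii|+|\jjj|}=\iii\jjj$, and $(\jjj\kkk)|_{|\jjj|}=\jjj$, combining the three resulting estimates gives $|\log\|A_\iii A_\jjj\|-\log\|A_\iii\|-\log\|A_\jjj\||\leq 3C$, establishing almost multiplicativity.

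The main content is (3) $\implies$ (4). Given the decomposition $\A=\A_e\cup\A_h$ and multicone $\CC\subset\RP$ provided by (3), for each $\iota\in\Sigma$ consider the nested intersection $V(\iota)=\bigcap_{n\ge 1}A_{\iota|_n}\CC\subset\CC$; it satisfies $V(\iota)=A(\iota)V(\sigma\iota)$. When $\iota$ contains infinitely many $\A_h$-indices, strict contraction at these positions collapses $V(\iota)$ to a single direction, and since $\A_e$ acts projectively on $\CC$ as a finite group by Proposition \ref{prop:connect}, item \ref{item:this}, the resulting directions lie in a fixed compact set $\CC^\ast\subset\CC^o$. Ergodicity of $\mu$ makes the set $X$ of such $\iota$ a $\mu$-full subset. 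Pick a measurable unit-vector section $\hat V$ and set
\begin{equation*}
f(\iota) = \log\|A(\iota)\hat V(\sigma\iota)\|, \qquad \iota\in X.
\end{equation*}
Equivariance of $\hat V$ produces the telescope $\sum_{k=0}^{n-1}f(\sigma^k\iota)=\log\|A_{\iota|_n}\hat V(\sigma^n\iota)\|$, which combined with the standard estimate $\|Mv\|\asymp\|M\|$ for $M\in\SS(\A)$ and $v\in\CC^\ast$ (a consequence of strong invariance of $\CC$ and almost multiplicativity) yields the required uniform bound on $X$. Continuity of $V$ on $X$ gives the $\mu$-a.e.\ continuity demanded by (4), and $|f|\leq\max_i\log\|A_i\|$ gives $f\in L^1(\mu)$.

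The principal obstacle is extending $f$ to $\Sigma\setminus X$, namely the $\mu$-null set of sequences with only finitely many $\A_h$-indices, while preserving the \emph{pointwise} bound in (4). My plan is to split such $\iota$ at its last $\A_h$-position $k^\ast$: on the strongly-conformal tail $\sigma^{k^\ast}\iota\in\A_e^\N$, set $f(\cdot)=\tfrac12\log|\det A(\cdot)|$, so that $\sum f$ matches $\log\|A\|$ up to the norm-conjugation constant provided by strong conformality of $\A_e$; on the finite prefix, use the explicit telescoping difference $f(\iota')=\log\|A_{\iota'|_m}\|-\log\|A_{\sigma\iota'|_{m-1}}\|$ where $m$ denotes the last $\A_h$-position of $\iota'$, which sums cleanly to $\log\|A_{\iota|_{k^\ast}}\|$ over $j=0,\ldots,k^\ast-1$. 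Almost multiplicativity bridges the prefix and the conformal tail to yield a single uniform constant $C$ bounding the error for every $\iota\in\Sigma$, completing the construction.
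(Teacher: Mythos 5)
Your proposal is correct and follows essentially the same route as the paper: the equivalence $(2)\Leftrightarrow(3)$ via Corollary~\ref{thm:justdomin2}, $(1)\Leftrightarrow(2)$ via the Gibbs-type property of the unique equilibrium state (Lemma~\ref{lem:irred}), a short argument closing the cycle between $(4)$ and $(1)/(2)$, and the substantive implication $(3)\Rightarrow(4)$ via the cocycle-invariant direction field $V(\iota)=\bigcap_n A_{\iota|_n}\CC$ together with Lemma~\ref{lem:Morris}. The one cosmetic difference worth flagging is in $(3)\Rightarrow(4)$: the paper avoids your two-case patching of $f$ on $X$ versus $\Sigma\setminus X$ by instead setting $f(\iii)=\int\log\|A_{\iii|_1}|V\|\,\mathrm{d}\nu_{\sigma\iii}(V)$ with $\nu_{\sigma\iii}=\lim_n(A_{\sigma\iii|_n})_*m$ the pushforward of the (finite) $\mathcal{F}(\A)$-Haar measure on $\CC$ (Lemma~\ref{lem:convmeas}); on $X$ this measure is a Dirac mass and recovers your formula, while on $\Upsilon$ it is a fixed pushforward of $m$, which makes the uniform bound and the $\mu$-a.e.\ continuity at points of $\hat\Sigma$ fall out of a single computation rather than requiring the boundary-matching argument you sketch at the end.
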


The previous theorem gives a positive answer to \cite[Question 7.4]{BaranyKaenmakiKoivusalo2017} in the two dimensional case.

\begin{theorem}\label{thm:holder}
  If $\A=(A_1,\ldots,A_N)\in GL_2(\R)^N$ is irreducible and $\mu$ is an equilibrium state for $\Phi^s$, then the following three statements are equivalent:
  \begin{enumerate}
    \item\label{it:holder1} $\mu$ is a Gibbs measure for some H\"older continuous potential,
    \item\label{it:holder2} $\A$ has a strongly invariant multicone or $\mathsf{A}$ is strongly conformal,
    \item\label{it:holder3} there exist a constant $C>0$ and a H\"older-continuous potential $f$ such that
    $$
      \Biggl|\sum_{k=0}^{n-1}f(\sigma^k\iii)-\log\|A_{\iii|_n}\|\Biggr|\leq C
    $$
    for all $\iii \in \Sigma$ and $n \in \N$.
  \end{enumerate}
\end{theorem}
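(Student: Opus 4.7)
The plan is to establish the cycle $(2)\Rightarrow(3)\Rightarrow(1)\Rightarrow(3)\Rightarrow(2)$, with Theorem~\ref{thm:fortuples} bridging from the almost-additivity consequence of (3) to the decomposition $\A=\A_e\cup\A_h$ that appears in (2). For $(2)\Rightarrow(3)$ I would treat the two subcases separately. If $\A$ is strongly conformal with common conjugation matrix $M$, then $MA_{\iii|_n}M^{-1}$ is a scalar multiple of an orthogonal matrix with scalar $\prod_{j=1}^{n}|\det A_{i_j}|^{1/2}$, so $\log\|A_{\iii|_n}\|=\tfrac12\sum_{j=1}^{n}\log|\det A_{i_j}|+O(1)$ and the locally constant $f(\iii)=\tfrac12\log|\det A_{i_1}|$ is H\"older and satisfies (3). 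If $\A$ has a strongly invariant multicone, Bochi--Gourmelon \cite{BochiGourmelon09} produces domination and a H\"older-continuous invariant direction $\UU$ for the cocycle; writing the action of the first matrix on a continuously-chosen unit vector $v(\iii)\in\UU(\iii)$ as $A_{i_1}v(\iii)=c(\iii)v(\sigma\iii)$, the H\"older function $f(\iii)=\log|c(\iii)|$ has Birkhoff sums that telescope to $\log\|A_{\iii|_n}v(\iii)\|$, which by domination equals $\log\|A_{\iii|_n}\|$ up to a bounded additive error.

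For $(3)\Leftrightarrow(1)$ I would use standard H\"older thermodynamic formalism together with uniqueness of the equilibrium state for $\Phi^s$ in the irreducible case \cite[Proposition~1.2]{FengKaenmaki2011}. Given (3), the potential $sf$ is H\"older with $P(sf)=P(\Phi^s)$, and its unique Bowen--Gibbs equilibrium state must coincide with $\mu$, giving (1). Conversely, combining the Gibbs-type inequality for $\Phi^s$ with the Gibbs inequality for H\"older $\phi$ gives $\|A_{\iii|_n}\|^{s}e^{-nP(\Phi^s)}\asymp\exp(\sum_{k=0}^{n-1}\phi(\sigma^k\jjj)-nP(\phi))$ for any $\jjj\in[\iii]$; H\"older continuity of $\phi$ allows replacing $\jjj$ by $\iii$ with bounded error, and (3) follows with $f=(\phi+P(\Phi^s)-P(\phi))/s$.

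The main step and principal obstacle is $(3)\Rightarrow(2)$. Since $\log\|A_{\iii|_n}\|$ is almost-additive by (3), the semigroup $\SS(\A)$ is almost multiplicative, and Theorem~\ref{thm:fortuples} produces the decomposition $\A=\A_e\cup\A_h$ with $\A_e$ strongly conformal and $\A_h$ (if nonempty) having a strongly invariant multicone $\CC$ with $A\CC=\CC$ for all $A\in\A_e$. The cases $\A_e=\emptyset$ and $\A_h=\emptyset$ yield (2)(ii) and (2)(i) directly, so the real task is to exclude the mixed case where both $\A_e$ and $\A_h$ are nonempty. The intuition is that in the mixed case the natural invariant direction for the cocycle, obtained from the $\A_h$-dominated splitting and propagated through $\A_e$-steps (which by Proposition~\ref{prop:connect} act with finite order on $\RP$), is only $\mu$-almost-everywhere continuous and fails to be H\"older on $\Sigma$; this incompatibility with H\"older continuity of $f$ is what I would exploit. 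Concretely, I would attempt a contradiction via Livsic-type rigidity applied to pairs of (eventually) periodic sequences sharing long $\A_e$-runs but with differing $\A_h$-tails: H\"older continuity of $f$ forces a precise relationship between the differences of their Birkhoff sums and the differences of the spectral-radius data $\log|\lambda_u(A_{\iii|_k})|$ dictated by the matrices, and irreducibility of $\A$ combined with $A\CC=\CC$ and Proposition~\ref{prop:connect} would ultimately force either $\A_e$ or $\A_h$ to be empty.
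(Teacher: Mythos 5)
Your outline of the equivalence chain, and your arguments for $(2)\Rightarrow(3)$ and $(1)\Leftrightarrow(3)$, agree with the paper and are correct. The crucial point, as you correctly identify, is $(3)\Rightarrow(2)$: after invoking Theorem~\ref{thm:fortuples} to get the decomposition $\A=\A_e\cup\A_h$, one must rule out the mixed case $\A_e\neq\emptyset\neq\A_h$. Here, however, you offer only a heuristic (``Livsic-type rigidity applied to pairs of eventually periodic sequences sharing long $\A_e$-runs''), which is not a proof, and this is precisely where the real content of the theorem lies.

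Two concrete difficulties with your sketch. First, $\log\|A_{\iii|_n}\|$ is not itself a Birkhoff sum, so the classical Livsic machinery does not apply directly; what you have is only the quasi-Bernoulli/almost-additive structure, and the obstruction you need to extract from it has to be made explicit. Second, the mixed-case contradiction in the paper is genuinely non-local: it is not achieved by comparing two periodic orbits directly, but by reducing to a clean three-matrix sub-system. Concretely, the paper uses Proposition~\ref{prop:connect} to find a scalar element $A_\jjj=cI$ of $\SS(\A_e)$, then uses irreducibility together with $\A_h\ne\emptyset$ to manufacture an irreducible, dominated pair of products $(A_{\iii_1}^qA_{\iii_3}^q,\,A_{\iii_2}^qA_{\iii_4}^q)$ of matching lengths, and finally invokes the ``three matrices lemma'' (Lemma~\ref{prop:key}): no H\"older potential on $\{1,2,3\}^\N$ can shadow $\log\|A_{\iii|_n}\|$ when one generator is a scalar and the other two form an irreducible dominated pair. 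The proof of that lemma is itself several pages long and proceeds by constructing an auxiliary product measure $\eta$ on $\Sigma$ from the Gibbs measure $\mu_h$ of the dominated pair, proving $\eta$ is mixing and equal to the Gibbs measure $\mu_f$ of the hypothetical H\"older potential (Proposition~\ref{prop:etaismuf}), extracting the vanishing of the normalized potential on $[3]$, and finally showing $\mu_h$ would be Bernoulli --- contradicting Proposition~\ref{prop:bernoulli}. None of these steps is suggested by ``compare Birkhoff sums on periodic orbits,'' and in fact the na\"ive periodic-orbit comparison runs into the problem that the eigenvalue of a mixed product $A_{\iii_1}A_\jjj^{k}A_{\iii_2}\cdots$ is a genuinely non-multiplicative function of the pieces, so the discrepancy you hope to see is not immediately visible. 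In short: your proposal is correct in structure and in the easy implications, but it leaves the hard implication $(3)\Rightarrow(2)$ --- which in the paper occupies an entire section --- as an unproved aspiration.
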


\begin{center}
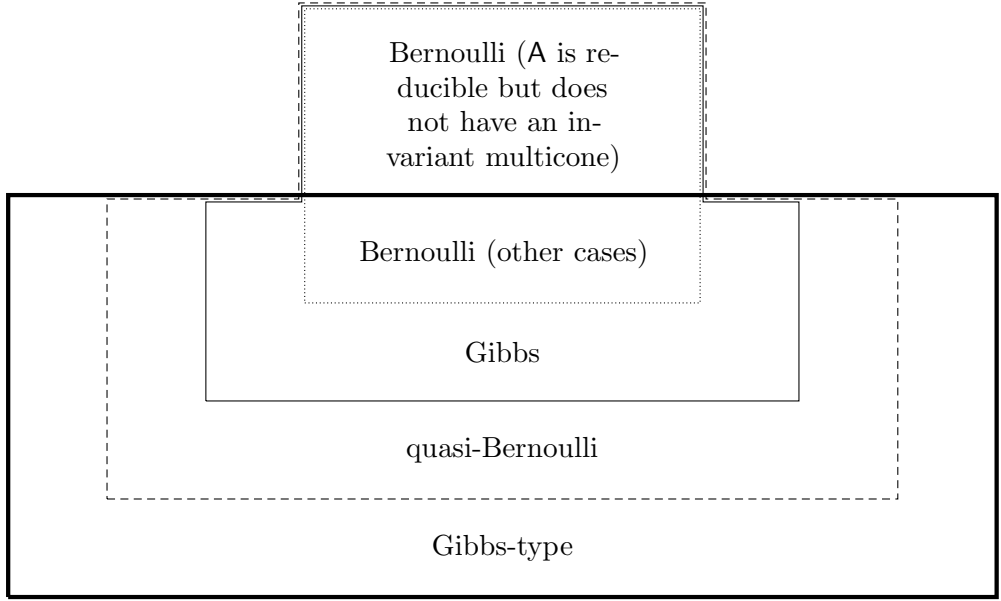
\begin{figure}[t]
\begin{tikzpicture}[scale=1.3]
    % Bernoulli
    \draw [densely dotted] (3,3) rectangle (7,6);
    \node[align=center, text width=4cm] at (5,5) {Bernoulli ($\mathsf{A}$ is reducible but does not have an invariant multicone)};
    \node[align=center, text width=4cm] at (5,3.5) {Bernoulli (other cases)};

    % Gibbs
    \draw (2,2) -- (8,2) -- (8,4.03) -- (7.03,4.03) -- (7.03,6.03) -- (2.97,6.03) -- (2.97,4.03) -- (2,4.03) -- (2,2);
    \node[align=center] at (5,2.5) {Gibbs};

    % quasi-Bernoulli
    \draw[densely dashed] (1,1) -- (9,1) -- (9,4.06) -- (7.06,4.06) -- (7.06,6.06) -- (2.94,6.06) -- (2.94,4.06) -- (1,4.06) -- (1,1);
    \node[align=center] at (5,1.5) {quasi-Bernoulli};

    % Gibbs-type
    \draw[ultra thick] (0,0) -- (10,0) -- (10,4.10) -- (0,4.10) -- (0,0);
    \node[align=center] at (5,0.5) {Gibbs-type};

  \end{tikzpicture}
  \caption{Classification of equilibrium states for $\Phi^s$.}
  \label{fig:illustration}
\end{figure}
\end{center}

Figure \ref{fig:illustration} illustrates how different properties of equilibrium states for $\Phi^s$ are related. The following example shows that the inclusions depicted in the figure are strict.

\begin{example} \label{example}
  (1) It can happen that an equilibrium state for $\Phi^s$ is a Gibbs measure for some H\"older-continuous potential, but is not a Bernoulli measure: Choose two positive matrices
  \begin{equation*}
    A_1 =
    \begin{pmatrix}
      2 & 1 \\
      1 & 1
    \end{pmatrix}
    \quad\text{and}\quad
    A_2 =
    \begin{pmatrix}
      2 & 1 \\
      1 & 2
    \end{pmatrix}.
  \end{equation*}
  Then $(A_1,A_2)$ is irreducible and has a strongly invariant multicone (i.e.\ the union of the first and third quadrants). The claim follows now from Theorem \ref{thm:holder} and Proposition \ref{prop:bernoulli}.

  (2) It can happen that an equilibrium state for $\Phi^s$ is a quasi-Bernoulli measure, but is not a Gibbs measure for any H\"older-continuous potential: Let $A_1$ and $A_2$ be as above. Then $(A_1,A_2,I)$ is irreducible and has an invariant multicone (i.e.\ the union of the first and third quadrants). The claim follows now from Theorems \ref{thm:fortuples} and \ref{thm:holder}.

  (3) It can happen that an equilibrium state for $\Phi^s$ is a Gibbs-type measure for $\Phi^s$, but is not a quasi-Bernoulli measure: Choose two matrices
  \begin{equation*}
    A_3 =
    \begin{pmatrix}
      1 & 0 \\
      0 & 2
    \end{pmatrix}
    \quad\text{and}\quad
    A_4 =
    \begin{pmatrix}
      0 & 1 \\
      1 & 0
    \end{pmatrix}.
  \end{equation*}
  Then $(A_3,A_4)$ is irreducible, has no invariant multicone, and does not contain only conformal matrices. The claim follows now from Proposition \ref{prop:gibbstype} and Theorem \ref{thm:fortuples}. We remark that this phenomenon has been observed earlier in \cite[\S 1.4]{FraserJordanJurga2017}. Another way to see the claim is to consider two conformal irreducible matrices not sharing a conjugation matrix.
\end{example}

\section{Characterization of domination}\label{sec:dom}

In this section, we prove Theorem \ref{thm:justdomin} and Propositions \ref{prop:connect} and \ref{prop:converse}. Let $\mathsf{A} \subset GL_2(\R)$ and recall that $\mathcal{S}(\mathsf{A})$ is the sub-semigroup of $GL_2(\mathbb{R})$ generated by $\mathsf{A}$. Let $\mathscr{S}(\mathsf{A}) = \overline{\mathbb{R}\mathcal{S}(\mathsf{A})}\subset M_2(\mathbb{R})$ and note that $\mathscr{S}(\mathsf{A})$ is a sub-semigroup of $M_2(\mathbb{R})$. Define
\begin{equation*}
  \mathscr{R}(\mathsf{A}) = \{ A \in \mathscr{S}(\mathsf{A}) : \rank(A)=1 \}.
\end{equation*}

\begin{lemma}\label{lem:onlyel}
  If $\A \subset GL_2(\R)$, then $\mathscr{R}(\mathsf{A})=\emptyset$ if and only if $\mathsf{A}$ is strongly conformal.
\end{lemma}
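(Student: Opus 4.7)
My plan is to treat the easy direction by a direct closure argument and the harder direction by a contrapositive that produces a rank-one element of $\mathscr{S}(\A)$ from any non-conformal element of $\SS(\A)$, invoking the stated equivalence between strong conformality and the assertion that every element of the generated semigroup is conformal.

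For the forward implication, I would suppose $\A$ is strongly conformal, so that some single $M \in GL_2(\R)$ satisfies $M\SS(\A)M^{-1} \subset \{cO : c>0,\, O \in O(2)\}$. Then $\R \cdot M\SS(\A)M^{-1} \subset \R \cdot O(2)$, and since $O(2)$ is compact the set $\R \cdot O(2)$ is closed in $M_2(\R)$ and contains only matrices of rank $0$ or $2$. Conjugating back, $\mathscr{S}(\A) \subset M^{-1}(\R \cdot O(2))M$ contains no rank-one matrix, so $\mathscr{R}(\A)=\emptyset$.

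For the reverse implication, I would argue contrapositively: if $\A$ is not strongly conformal, then some $A \in \SS(\A)$ is not conformal, hence either proximal or parabolic. In each case a suitable renormalisation of the powers of $A$ yields a rank-one matrix in $\mathscr{S}(\A) = \overline{\R\,\SS(\A)}$. In the proximal case, I diagonalise $A = P \operatorname{diag}(\lambda_1,\lambda_2) P^{-1}$ with $|\lambda_1|>|\lambda_2|$ and note
\[
  \lambda_1^{-n} A^n = P \operatorname{diag}\!\bigl(1,(\lambda_2/\lambda_1)^n\bigr) P^{-1} \longrightarrow P \operatorname{diag}(1,0) P^{-1},
\]
which has rank one. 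In the parabolic case I write $A = P(\lambda I + N) P^{-1}$ with $N^2=0$ and $\rank(N)=1$; since $A$ is invertible $\lambda\neq 0$, and the binomial formula yields $A^n = P(\lambda^n I + n\lambda^{n-1} N) P^{-1}$, so $(n\lambda^{n-1})^{-1}A^n \to PNP^{-1}$, again of rank one. Both limits belong to $\mathscr{S}(\A)$, showing $\mathscr{R}(\A) \neq \emptyset$.

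The only real subtlety I foresee is the claim, used in the forward direction, that the closure does not inadvertently introduce rank-one matrices. This reduces to the observation that $\|\lambda O\| = |\lambda|$ for $O \in O(2)$, so for any convergent sequence $\lambda_n O_n$ in $\R \cdot O(2)$ either $\lambda_n \to 0$, which gives the zero limit, or, after extraction, $O_n$ converges in the compact group $O(2)$ and the limit again lies in $\R \cdot O(2)$.
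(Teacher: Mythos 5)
Your forward direction is fine and essentially identical to the paper's: conjugate into $\R \cdot O(2)$, observe that this set is closed and contains only matrices of rank $0$ or $2$, and note that $\mathscr{S}(\A)$ lands inside it.

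The reverse direction has a genuine gap. Your very first step --- ``if $\A$ is not strongly conformal, then some $A \in \SS(\A)$ is not conformal'' --- invokes the remark in \S 2 that strong conformality is equivalent to every element of $\SS(\A)$ being conformal. That remark is asserted in the paper without proof, and its nontrivial direction (all of $\SS(\A)$ conformal $\Rightarrow$ common conjugation matrix) is not elementary: it is the planar version of the fact that a subgroup of $SL_2(\R)$ consisting entirely of elliptic elements is conjugate into $SO(2)$, and its standard proof runs through exactly the compactness/averaging machinery that the paper deploys to prove Lemma~\ref{lem:onlyel}. In effect you are offloading the hard content of the lemma onto an unproven aside, so the argument is circular. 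Concretely, the case you cannot dispatch is: every element of $\SS(\A)$ is conformal, yet the normalised semigroup $\{|\det A|^{-1/2}A : A \in \SS(\A)\}$ is unbounded. In that situation no product is proximal or parabolic, so your power-and-rescale construction has nothing to feed on; instead one must extract a rank-one \emph{nilpotent} limit from an unbounded sequence (note that $(\tr B_n)^2 \le 4|\det B_n|$ forces any normalised limit to have trace zero, hence to be nilpotent rather than proximal). The paper avoids this entirely by arguing in the other direction: $\mathscr{R}(\A)=\emptyset$ forces the normalised semigroup $\mathscr{S}'(\A)$ to be bounded, hence compact, hence a compact \emph{group}, and then Haar-averaging the standard inner product produces the common conjugation matrix $X$. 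To repair your proof you would either need to supply a self-contained proof of the \S 2 equivalence (which would reproduce that compactness argument) or adopt the paper's direct route.
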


 \begin{proof}
  If $\A$ is strongly conformal, then by definition there exists a conjugation matrix $M \in GL_2(\mathbb{R})$ such that $|\det(A)|^{-1/2} MAM^{-1}\in O(2)$ for all $A \in \A$, which implies that $|\det(A)|^{-1/2} MAM^{-1}\in O(2)$ for all nonzero $A \in \mathscr{S}(\A)=\overline{\R\SS(\A)}$. In particular, all nonzero elements of $ \mathscr{S}(\A)$ have rank $2$ and therefore $\mathscr{R}(\mathsf{A})=\emptyset$.

  Suppose conversely that $\mathscr{R}(\A)=\emptyset$. We claim that set 
  $$
  \mathscr{S}'(\A)=\{|\det(A)|^{-1/2}A : A\in\mathscr{S}(\A)\setminus\{\boldsymbol{0}\}\}=\mathscr{S}(\A)\cap\{A \in M_2(\R) : |\det(A)|=1\}
  $$
  is compact. It is obviously closed, being the intersection of $\mathscr{S}(\A)$ with the closed set $\{A \in M_2(\mathbb{R}) : |\det(A)|=1\}$. If it contains a sequence of elements $(A_n)$ such that $\|A_n\| \to \infty$ then this sequence can without loss of generality be taken to be a sequence of elements of $\mathbb{R}\mathcal{S}(\A)$. The sequence of normalised matrices $\|A_n\|^{-1}A_n$ has an accumulation point which necessarily has determinant zero and norm one and belongs to $\mathscr{S}(\A)$; this limit point is thus an element of $\mathscr{R}(\mathsf{A})$, which is a contradiction, and we conclude that $\{|\det(A)|^{-1/2}A : A\in\mathscr{S}(\A)\setminus\{\boldsymbol{0}\}\}$ is bounded. It is therefore compact as claimed.

  The set $\mathscr{S}'(\A)$ is thus a compact sub-semigroup of $GL_2(\mathbb{R})$. We claim that it is a group. To show this it is sufficient to show that the inverse of every $A\in \mathscr{S}(\A)$ with $|\det(A)|=1$ belongs to $\mathscr{S}(\A)$. If $A \in \mathscr{S}(\A)$ is arbitrary, take a convergent subsequence $(A^{n_k})_{k=1}^\infty$ of the sequence $(A^n)_{n=1}^\infty$ with limit $B \in \mathscr{S}(\A) \subset GL_2(\mathbb{R})$, say.  The sequence $(A^{-n_k})_{k=1}^\infty$ clearly converges to $B^{-1}$ and therefore $A^{n_{k+1} - n_k-1} \to A^{-1}$ as $k \to \infty$. Thus $A^{-1}$ is the accumulation point of a sequence of elements of $\mathscr{S}(\A)$, hence an element of $\mathscr{S}(\A)$.

  The set $\mathscr{S}'(\A)$  is therefore a compact subgroup of $GL_2(\mathbb{R})$. If $m$ is Haar measure on $\mathscr{S}'(\A)$ and $\langle \cdot,\cdot\rangle$ is the standard inner product on $\mathbb{R}^2$ it is easy to see that $\langle u,v\rangle':=\int \langle Au,Av\rangle \dd m(A)$ defines an inner product on $\mathbb{R}^2$ which is invariant under every element of $\mathscr{S}'(\A)$. Every inner product on $\mathbb{R}^2$ is related to the standard one by a change of basis, so there exists $X \in GL_2(\mathbb{R})$ such that $\langle u,v\rangle'=\langle Xu,Xv\rangle$ for all $u,v \in \mathbb{R}^2$. In particular, $\langle XAX^{-1}u,XAX^{-1}v\rangle=\langle AX^{-1}u,AX^{-1}v\rangle'=\langle X^{-1}u,X^{-1}v\rangle'=\langle u,v\rangle$ for all $u,v \in \mathbb{R}^2$ and $A \in \mathscr{S}'(\A)$ which yields $\mathscr{S}'(\A)\subset XO(2)X^{-1}$. Thus $\mathscr{S}(\A)$ is strongly conformal and therefore $\A$ is strongly conformal as required.
 \end{proof}

We note that according to the previous lemma, $\mathscr{R}(\A)\neq\emptyset$ if and only if $\SS(\A)$ contains at least one proximal or parabolic element. In the next lemma, we exclude parabolic elements.

\begin{lemma} \label{thm:dom-lemma0}
  Let $\A \subset GL_2(\R)$ with $\mathscr{R}(\mathsf{A}) \ne \emptyset$ be such that $\mathcal{S}(\mathsf{A})$ is almost multiplicative. Then $\mathcal{S}(\mathsf{A})$ does not contain parabolic elements and $\mathscr{R}(\A)$ does not contain nilpotent elements.
\end{lemma}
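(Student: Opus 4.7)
The plan is to prove the two conclusions in reverse order: first establish that $\mathscr{R}(\mathsf{A})$ contains no nilpotent elements, and then use this to exclude parabolic elements from $\mathcal{S}(\mathsf{A})$ by extracting a nilpotent rank-one element of $\mathscr{R}(\mathsf{A})$ from a suitable rescaling of the powers of any parabolic element.

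For the first step, I argue by contradiction. Suppose $R\in\mathscr{R}(\mathsf{A})$ is nilpotent; being $2\times 2$, of rank one, and nilpotent forces $R^2=0$. By the definition of $\mathscr{S}(\mathsf{A})=\overline{\mathbb{R}\mathcal{S}(\mathsf{A})}$, choose sequences $(c_n)\subset\mathbb{R}$ and $(B_n)\subset\mathcal{S}(\mathsf{A})$ with $c_nB_n\to R$. Continuity of matrix multiplication yields $c_n^2 B_n^2=(c_nB_n)^2\to R^2=0$, so $c_n^2\|B_n^2\|\to 0$. On the other hand, almost multiplicativity applied to the pair $(B_n,B_n)\in\mathcal{S}(\mathsf{A})\times\mathcal{S}(\mathsf{A})$ gives $\|B_n^2\|\geq\kappa\|B_n\|^2$, whence
\[
c_n^2\|B_n^2\|\;\geq\;\kappa\|c_nB_n\|^2\;\longrightarrow\;\kappa\|R\|^2\;>\;0,
\]
where $R\neq 0$ by the definition of $\mathscr{R}(\mathsf{A})$. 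This contradicts $c_n^2\|B_n^2\|\to 0$.

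For the second step, suppose $A\in\mathcal{S}(\mathsf{A})$ is parabolic with eigenvalue $\lambda$. Because the characteristic polynomial of $A$ has real coefficients with vanishing discriminant and $A\in GL_2(\mathbb{R})$, we have $\lambda\in\mathbb{R}\setminus\{0\}$, and $N:=A-\lambda I$ is a nonzero rank-one matrix with $N^2=0$. The binomial identity then gives $A^n=\lambda^n I+n\lambda^{n-1}N$ for every $n\in\mathbb{N}$, so
\[
(n\lambda^n)^{-1}A^n \;=\; \tfrac{1}{n}I+\lambda^{-1}N \;\longrightarrow\; \lambda^{-1}N \quad\text{as } n\to\infty.
\]
Since $A^n\in\mathcal{S}(\mathsf{A})$ and $(n\lambda^n)^{-1}\in\mathbb{R}$, the limit $\lambda^{-1}N$ lies in $\mathscr{S}(\mathsf{A})$; it is of rank one and hence lies in $\mathscr{R}(\mathsf{A})$; and it is a nonzero scalar multiple of $N$, so it is nilpotent. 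This contradicts the first step.

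The only delicate point is identifying the correct rescaling $n^{-1}\lambda^{-n}$ to extract the off-diagonal Jordan part of a parabolic element as a genuine limit in $\mathscr{S}(\mathsf{A})$; once this is done, both steps reduce to the continuity of matrix multiplication paired with the single inequality $\|B^2\|\geq\kappa\|B\|^2$ supplied by almost multiplicativity.
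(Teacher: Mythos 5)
Your proof is correct, and it takes a genuinely different route from the paper's. The paper handles the two exclusions independently and in the opposite order: for the parabolic case it writes a parabolic $A\in\mathcal{S}(\mathsf{A})$ as a Jordan block $\begin{pmatrix}a&0\\ b&a\end{pmatrix}$ with $b\ne0$, estimates $\|A^n\|\asymp n|a^{n-1}b|$, and deduces $\|A^{2n}\|/\|A^n\|^2\to 0$, contradicting almost multiplicativity directly; for the nilpotent case it first notes that the inequality $\|AB\|\ge\kappa\|A\|\|B\|$ extends from $\mathcal{S}(\mathsf{A})$ to $\mathscr{S}(\mathsf{A})$ by continuity, and then observes that a nilpotent $A\in\mathscr{R}(\mathsf{A})$ would satisfy $0=\|A^n\|\ge\kappa^{n-1}\|A\|^n>0$. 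You instead treat the nilpotent exclusion as the fundamental fact, proving it by applying almost multiplicativity along a sequence $c_nB_n\to R$ in $\R\mathcal{S}(\mathsf{A})$ and passing to the limit (which amounts to proving the continuity extension and using it in one step), and you then \emph{reduce} the parabolic case to the nilpotent one by showing that the rescaled powers $(n\lambda^n)^{-1}A^n$ of a parabolic $A$ converge in $\mathscr{S}(\mathsf{A})$ to a nonzero nilpotent rank-one matrix. Your reduction is an elegant structural observation that makes nilpotency the single obstruction; the paper's argument is slightly more computational but stays entirely within $\mathcal{S}(\mathsf{A})$ for the parabolic case. Both are complete and roughly equally short.
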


\begin{proof}
Suppose that $\mathcal{S}(\A)$ contains a parabolic element. This means that, after a suitable change of basis, there exists $A\in\mathcal{S}(\A)$ such that
  $$
    A=\begin{pmatrix}
    a & 0 \\
    b & a
    \end{pmatrix},
  $$
  where $b\neq 0$. It follows that there exists $c>0$ such that $c^{-1}n|a^{n-1}b|\leq\|A^n\|\leq cn|a^{n-1}b|$ for all $n \in \N$. It follows directly that $\lim_{n \to \infty} \|A^{2n}\|/\|A^n\|^2=0$ which contradicts the condition  $\|AB\|\geq \kappa \|A\|\|B\|$.
  
  Observe that the relation $\|AB\| \ge \kappa\|A\|\|B\|$ holds for all $A,B \in \mathscr{S}(\mathsf{A})$ by continuity. So similarly, if there exists a nilpotent $A\in\mathscr{R}(\A)$, then $0=\|A^n\|\geq\kappa^{n-1}\|A\|^n>0$ for some $n\in\N$, which is again a contradiction.
\end{proof}

Assuming $\mathscr{R}(\mathsf{A}) \ne \emptyset$, we define the set $X_u$ of all \emph{unstable directions} of proximal elements of $\mathcal{S}(\A)$ to be
\begin{equation*}
  X_u = X_u(\A) = \{V \in \mathbb{RP}^1 : V=A\R^2 \text{ for some } A \in \mathscr{R}(\mathsf{A}) \}
\end{equation*}
and the set $X_s$ of all \emph{stable directions} to be
\begin{equation*}
  X_s = X_s(\A) = \{ \ker(A) \in \mathbb{RP}^1 : A \in \mathscr{R}(\mathsf{A}) \}.
\end{equation*}
%We observe that $X_u$ is closed, since $V=A\R^2$ for some $A \in \mathscr{R}(\mathsf{A})$ if and only if $V=A\R^2$ for some $A \in \mathscr{R}(\mathsf{A})$ such that $\|A\|=1$, and the set of all such $A$ is compact. Likewise $X_s$ is closed.

\begin{lemma} \label{thm:dom-lemma1}
  Let $\A \subset GL_2(\R)$ with $\mathscr{R}(\mathsf{A}) \ne \emptyset$ be such that $\mathcal{S}(\mathsf{A})$ is almost multiplicative. Then the sets $X_u$ and $X_s$ are nonempty, compact, and disjoint. Furthermore, $AX_u \subset X_u$ for all $A \in \mathscr{S}(\mathsf{A})$.\end{lemma}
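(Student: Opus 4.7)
The plan is to derive all four assertions from the almost multiplicative inequality, so I would first extend this inequality from $\mathcal{S}(\A)$ to $\mathscr{S}(\A) = \overline{\R\mathcal{S}(\A)}$ by continuity: if $A = \lim_n t_n A_n$ and $B = \lim_n s_n B_n$ with $A_n, B_n \in \mathcal{S}(\A)$ and $t_n, s_n \in \R$, passing to the limit in $\|(t_n A_n)(s_n B_n)\| \geq \kappa \|t_n A_n\| \|s_n B_n\|$ yields $\|AB\| \geq \kappa \|A\| \|B\|$. In particular, the product of two nonzero elements of $\mathscr{S}(\A)$ is nonzero. Nonemptiness of both $X_u$ and $X_s$ is then immediate from the hypothesis $\mathscr{R}(\A) \neq \emptyset$.

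For compactness, I would consider the normalized set $K = \{A \in \mathscr{S}(\A) : \|A\| = 1 \text{ and } \det A = 0\}$. Since $\mathscr{S}(\A)$ is closed and the conditions $\|A\| = 1$ and $\det A = 0$ are closed, $K$ is a closed subset of the compact unit sphere in $M_2(\R)$, hence compact. The unit norm forces elements of $K$ to be nonzero, so combined with vanishing determinant they have rank exactly one. Every element of $X_u$ can thus be written as $A\R^2$ for some $A \in K$, and similarly every element of $X_s$ as $\ker A$ for some $A \in K$. The maps $A \mapsto A\R^2$ and $A \mapsto \ker A$ are continuous on the set of rank-one matrices (the image is the span of any nonzero column, the kernel the orthogonal complement of any nonzero row, both depending continuously on $A$ at rank-one points), so $X_u$ and $X_s$ are continuous images of $K$ and therefore compact.

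For disjointness, suppose towards a contradiction that $V \in X_u \cap X_s$. Then one may choose $A, B \in \mathscr{R}(\A)$ with $A\R^2 = V = \ker B$, which gives $BA\R^2 = B(V) = \{0\}$ and hence $BA = 0$. But $A$ and $B$ are nonzero elements of $\mathscr{S}(\A)$, so the extended inequality forces $\|BA\| \geq \kappa \|A\|\|B\| > 0$, a contradiction.

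Finally, for invariance, fix $A \in \mathscr{S}(\A)$ and $V = B\R^2 \in X_u$ with $B \in \mathscr{R}(\A)$. If $A = 0$ there is nothing to check, so assume $A \neq 0$. Then $AB \in \mathscr{S}(\A)$ has $\rank(AB) \leq \rank(B) = 1$, while the extended almost multiplicative inequality gives $\|AB\| \geq \kappa \|A\|\|B\| > 0$, ruling out the rank-zero case. Thus $AB \in \mathscr{R}(\A)$, and $AV = (AB)\R^2 \in X_u$. The only conceptual subtlety is precisely this point: almost multiplicativity is what prevents any rank drop from collapsing a rank-one element to zero, and this is what makes the projective maps well defined and the inclusion $AX_u \subset X_u$ meaningful.
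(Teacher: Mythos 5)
Your proof is correct and follows essentially the same route as the paper's: extend the almost multiplicative inequality to $\mathscr{S}(\A)$ by continuity, get compactness of $X_u$ and $X_s$ as continuous images of the compact set of unit-norm rank-one elements of $\mathscr{S}(\A)$, derive disjointness from the fact that the product of two nonzero elements of $\mathscr{S}(\A)$ cannot vanish, and prove invariance by observing that $AB$ has rank at most one and is nonzero by almost multiplicativity. The only (cosmetic) difference is that you explicitly dispose of the case $A=0$ in the invariance step, which the paper leaves implicit.
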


\begin{proof}
  First, we note again that the relation $\|AB\| \ge \kappa\|A\|\|B\|$ holds for all $A,B \in \mathscr{S}(\mathsf{A})$. To see that $X_u$ and $X_s$ are disjoint, note that if $V \in X_u \cap X_s$ then there exist nonzero $B_1,B_2 \in \mathscr{R}(\mathsf{A})$ such that $B_2\mathbb{R}^2=V$ and $B_1V=\{0\}$. Hence $B_1B_2$ is the zero matrix but $B_1$ and $B_2$ are not, which contradicts $\|B_1B_2\|\geq \kappa \|B_1\|\|B_2\|>0$. It follows that $X_u \cap X_s$ is empty. The nonempty set
  \[
  \mathscr{R}_1(\mathsf{A})=\{B \in \mathscr{R}(\mathsf{A}) : \|B\|=1\}= \{B \in \mathscr{S}(\mathsf{A}) : \det(B)=0 \text{ and }\|B\|=1\}
  \]
  is clearly a closed bounded subset of $\mathscr{S}(\mathsf{A})$, and in particular is compact. It follows that $X_u$ and $X_s$ are the images of continuous functions $\mathscr{R}_1(\mathsf{A}) \to \mathbb{RP}^1$ and hence are compact and nonempty.

  To see the last claim, consider a subspace $U$ such that $U=AV$ for some $V \in X_u$ and $A \in \mathscr{S}(\A)$. We have $V=B\R^2$ for some $B \in \mathscr{R}(\mathsf{A})$. Clearly $AB$ has rank at most $1$ and is nonzero since $\|AB\| \ge \kappa\|A\|\|B\|>0$, so $AB \in \mathscr{R}(\mathsf{A})$ and $U \in X_u$.
\end{proof}

The following lemma shows that the definitions of unstable and stable directions agree with the ones given in \S \ref{sec:subdom}.

\begin{lemma}
  Let $\A \subset GL_2(\R)$ with $\mathscr{R}(\mathsf{A}) \ne \emptyset$ be such that $\mathcal{S}(\mathsf{A})$ is almost multiplicative. Then
  \begin{align*}
    X_u&=\overline{\{u(A): A\in\SS(A)\text{ is proximal}\}}, \\
    X_s&=\overline{\{s(A):A\in\SS(A)\text{ is proximal}\}}.
  \end{align*}
\end{lemma}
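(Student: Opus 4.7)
The plan is to prove two inclusions in each equality. Write $Y_u=\overline{\{u(A):A\in\SS(\A)\text{ proximal}\}}$ and $Y_s=\overline{\{s(A):A\in\SS(\A)\text{ proximal}\}}$. By Lemma \ref{thm:dom-lemma1} the sets $X_u,X_s$ are closed, so it suffices to show that every unstable (resp.\ stable) direction of a proximal element of $\SS(\A)$ lies in $X_u$ (resp.\ $X_s$), and conversely that every element of $X_u$ (resp.\ $X_s$) is a limit of such directions.

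For the inclusions $Y_u\subset X_u$ and $Y_s\subset X_s$, first I would take a proximal $A\in\SS(\A)$ and look at the matrices $A^k/\lambda_u(A)^k$. Since $A$ is diagonalisable with $|\lambda_s(A)|<|\lambda_u(A)|$, these matrices converge as $k\to\infty$ to the rank-one projection $P$ with image $u(A)$ and kernel $s(A)$. As $A^k/\lambda_u(A)^k\in\R\SS(\A)$ for every $k$, the limit $P$ belongs to $\mathscr{S}(\A)=\overline{\R\SS(\A)}$ and is of rank one, hence $P\in\mathscr{R}(\A)$. Therefore $u(A)=P\R^2\in X_u$ and $s(A)=\ker P\in X_s$, and the claimed inclusions follow by closedness.

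The more delicate direction is $X_u\subset Y_u$ and $X_s\subset Y_s$. Take a nonzero $A\in\mathscr{R}(\A)$ and choose scalars $t_n\in\R$ and $B_n\in\SS(\A)$ with $t_nB_n\to A$; after normalising, assume $\|A\|=1$ and $B_n/\|B_n\|\to A$. Since $A$ has rank one and, by Lemma \ref{thm:dom-lemma0}, is not nilpotent, $A\R^2\not\subset\ker A$, so $A$ satisfies $A^2=cA$ for some scalar $c\neq 0$, and the eigenvalues of $A$ are $c$ and $0$. The key idea is to use continuity of the coefficients of the characteristic polynomial: $\operatorname{tr}(B_n/\|B_n\|)\to c$ and $\det(B_n/\|B_n\|)\to 0$, so the discriminant tends to $c^2>0$. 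Hence for all large $n$ the matrix $B_n$ has two distinct real eigenvalues $\mu_n$ and $\nu_n$ with $\mu_n/\|B_n\|\to c$ and $\nu_n/\|B_n\|\to 0$; in particular $B_n$ is proximal. Picking unit eigenvectors $v_n\in u(B_n)$ and $w_n\in s(B_n)$ and passing to a convergent subsequence, the identities $(B_n/\|B_n\|)v_n=(\mu_n/\|B_n\|)v_n$ and $(B_n/\|B_n\|)w_n=(\nu_n/\|B_n\|)w_n$ pass to the limit, yielding $Av=cv$ and $Aw=0$. Thus every limit point $v$ of $(v_n)$ spans $A\R^2$ and every limit point $w$ of $(w_n)$ lies in $\ker A$, which gives $u(B_n)\to A\R^2$ in $\RP^1$ and $s(B_n)\to\ker A$. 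This shows $A\R^2\in Y_u$ and $\ker A\in Y_s$, completing both inclusions.

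The main technical point is the second step: arguing that the approximating sequence $B_n$ becomes proximal and that its eigendirections converge to the prescribed image and kernel of the rank-one limit. This relies crucially on Lemma \ref{thm:dom-lemma0}, which forbids the limit $A$ from being nilpotent so that the scalar $c$ is nonzero, ensuring the discriminant stays positive and the two eigenvalues separate with uniform sign in the limit.
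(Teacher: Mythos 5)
Your argument is correct and follows the same two-inclusion strategy as the paper, but the key technical step differs in a noteworthy way. Both proofs reduce the hard inclusion $X_u\subset Y_u$ (and likewise for $X_s$) to showing that any $B_n\in\SS(\A)$ with $\|B_n\|^{-1}B_n\to A\in\mathscr{R}(\A)$ is eventually proximal. The paper does this by a case analysis: it first invokes Lemma~\ref{thm:dom-lemma0} to rule out parabolic $B_n$, then rules out conformal $B_n$ by combining the inequality $(\tr B_n')^2\le 4|\det(B_n')|$ with Cayley--Hamilton and almost multiplicativity to contradict $\|B_n^2\|\geq\kappa\|B_n\|^2$. You instead observe directly that $A$ is rank one and, by the non-nilpotency part of Lemma~\ref{thm:dom-lemma0}, has $\tr A=c\neq 0$, so the discriminant of $B_n/\|B_n\|$ tends to $c^2>0$; hence $B_n$ eventually has two distinct real eigenvalues of separated magnitudes and is proximal, with eigendirections converging to $A\R^2$ and $\ker A$ by the subsequence argument. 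This is a cleaner and more direct route that sidesteps the conformal/parabolic case split and uses almost multiplicativity only through the citation of Lemma~\ref{thm:dom-lemma0}. The easy inclusions $Y_u\subset X_u$ and $Y_s\subset X_s$ are argued essentially identically in both proofs, via normalised powers $A^k$ converging to a rank-one element of $\mathscr{S}(\A)$ (you normalise by $\lambda_u(A)^k$, the paper by $\|A^k\|$, which differ only by a scalar). Both arguments are complete and correct.
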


\begin{proof}
  Let us first demonstrate the inclusions
  \begin{align}
    X_u &\subset \overline{\{u(A): A\in\SS(A)\text{ is proximal}\}}, \label{eq:eztet} \\
    X_s &\subset \overline{\{s(A): A\in\SS(A)\text{ is proximal}\}}. \label{eq:aztat}
  \end{align}
  Before doing so, we show that for every $A \in \mathscr{R}(\mathsf{A})$ with $\|A\|=1$ and any sequence $(B_n)_{n=1}^\infty$ of elements of $\SS(\A)$ such that $\|B_n\|^{-1}B_n\to A$ as $n\to\infty$, the sequence $B_n$ contains only proximal elements for all sufficiently large $n$. By Lemma~\ref{thm:dom-lemma0}, no $B_n$ may be a parabolic matrix. Let us contrarily assume that, after passing to a suitable subsequence, every $B_n$ is conformal. Write $B_n':=\|B_n\|^{-1}B_n$ for all $n \in \N$. Since $A$ has rank one we have $\det(A)=0$ and therefore $\det(B_n') \to 0$. Since every $B_n'$ is conformal it satisfies $(\tr B_n')^2\leq 4|\det(B_n')|$ and therefore $\tr B_n' \to 0$. By the Cayley-Hamilton theorem, we have $(B_n')^2-(\tr B_n' )B_n'+(\det(B_n'))I=\boldsymbol{0}$ and since $B_n' \to A$ we deduce that $(B_n')^2 \to 0$. Since $\|B_n'\|=1$ for all $n \in \N$ we get $\|B_n^2\|/\|B_n\|^2 = \|(B_n')^2\|/\|B_n'\|^2 =\|(B_n')^2\|\to 0$, but this contradicts $ \|B_n^2 \| \geq \kappa \|B_n\|^2$. We conclude that $(B_n)_{n=1}^\infty$ is proximal for all sufficiently large $n$ as claimed. 
  
  It is well known that the maps $u(\cdot)$ and $s(\cdot)$ are continuous on proximal matrices. Moreover, by Lemma~\ref{thm:dom-lemma0}, every $A\in\mathscr{R}(\A)$ is proximal. Hence, if $V\in X_u$, then there exists a proximal $A\in\mathscr{R}(\A)$ with $\|A\|=1$ such that $V=A\R^2=u(A)$. Moreover, there exists a sequence of proximal matrices $B_n\in\SS(\A)$ such that $\|B_n\|^{-1}B_n\to A$ and thus, by the continuity of $u$, $u(B_n)=u(\|B_n\|^{-1}B_n)\to u(A)=V$, which shows \eqref{eq:eztet}. Similarly, if $V\in X_s$, then there exists a proximal $A\in\mathscr{R}(\A)$ with $\|A\|=1$ such that $V=\ker(A)=s(A)$, and there exists a sequence of proximal matrices $B_n\in\SS(\A)$ such that $\|B_n\|^{-1}B_n\to A$. Applying now the continuity of $s$, we get $s(B_n)=s(\|B_n\|^{-1}B_n)\to s(A)=V$ showing \eqref{eq:aztat}.
  
  To finish the characterization of $X_u$ it is sufficient to show that
  \begin{align}
    X_u &\supset \{u(A): A\in\SS(\A)\text{ is proximal}\}, \label{eq:eztet2} \\
    X_s &\supset \{s(A): A\in\SS(\A)\text{ is proximal}\}, \label{eq:aztat2}
  \end{align}
  since we may then appeal to Lemma \ref{thm:dom-lemma1} and the fact that the sets $X_u$ and $X_s$ are closed. If $V=u(A)$ for some proximal $A\in\SS(\A)$, then $\|A^{n}\|^{-1}A^n\to B$ as $n\to\infty$, where $B\in\mathscr{R}(\A)$ is such that $B\R^2=u(B)=V$. This shows \eqref{eq:eztet2}. Similarly, if $V=s(A)$ for some proximal $A\in\SS(\A)$, then $\|A^{n}\|^{-1}A^n\to B$ as $n\to\infty$, where $B\in\mathscr{R}(\A)$ is such that $\ker(B)=V$. This shows \eqref{eq:aztat2} and completes the proof.
\end{proof}

Let $d$ be the metric on $\mathbb{RP}^1$ defined by taking $d(U,V)$ to be the angle between the subspaces $U$ and $V$. If $\A \subset GL_2(\R)$ is such that $\mathscr{R}(\mathsf{A}) \ne \emptyset$, then we define
\[
  \mathcal{V}_n = \{U\in \mathbb{RP}^1 \colon d(U,V)< \tfrac{1}{n}\text{ for some } V\in X_u \}
\]
and
\[
  \mathcal{U}_n =\bigcup_{A \in \mathcal{S}(\mathsf{A})} A\mathcal{V}_n
\]
for all $n \in \N$.

\begin{lemma} \label{thm:dom-lemma2}
  Let $\A \subset GL_2(\R)$ with $\mathscr{R}(\mathsf{A}) \ne \emptyset$ be such that $\mathcal{S}(\mathsf{A})$ is almost multiplicative. Then there is $n_0 \in \N$ such that $\overline{\mathcal{U}}_n$ as defined above is an invariant unstable multicone for all $n \ge n_0$.
\end{lemma}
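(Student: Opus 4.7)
The plan is to verify each of the defining properties of an invariant unstable multicone for $\overline{\mathcal{U}}_n$ when $n$ is chosen sufficiently large: $\A$-invariance; $\overline{\mathcal{U}}_n\cap X_s=\emptyset$ (which also yields that $\overline{\mathcal{U}}_n$ is a proper subset of $\RP$); $\partial\overline{\mathcal{U}}_n\cap X_u=\emptyset$; every component meets $X_u$; and the structure as a finite union of closed projective arcs. The existence of a proximal element in $\SS(\A)$ follows from the hypothesis $\mathscr{R}(\A)\neq\emptyset$ together with Lemma~\ref{thm:dom-lemma0}: any $B\in\mathscr{R}(\A)$ is non-nilpotent of rank one, and the elements of $\mathbb{R}\SS(\A)$ approximating it must eventually be proximal. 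Invariance is immediate from
\[
B\mathcal{U}_n=\bigcup_{A\in\SS(\A)}BA\mathcal{V}_n\subset\mathcal{U}_n
\]
for every $B\in\SS(\A)$, since the continuous action of $A\in\A$ respects closures. Since $\mathcal{U}_n$ is open as a union of homeomorphic images of the open set $\mathcal{V}_n$, and contains $X_u$, one has $X_u\subset\inter\overline{\mathcal{U}}_n$, and $\partial\overline{\mathcal{U}}_n\cap X_u=\emptyset$ follows.

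Next I would analyse the components. By the definition of $\mathcal{V}_n$, every component of $\mathcal{V}_n$ contains a point of $X_u$; since each $A\in\SS(\A)$ is a homeomorphism of $\RP$ with $AX_u\subset X_u$ by Lemma~\ref{thm:dom-lemma1}, every component of $A\mathcal{V}_n$ also meets $X_u$. A standard maximality argument shows that every component of $\mathcal{U}_n$ contains an entire component of some $A\mathcal{V}_n$, and hence meets $X_u$. If $C$ is such a component and $V\in C\cap X_u$, then the arc $B(V,1/n)\subset\mathcal{V}_n\subset\mathcal{U}_n$ is connected and contains $V$, so $B(V,1/n)\subset C$. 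Consequently every component of $\mathcal{U}_n$ has arc length at least $2/n$, and since $\RP$ has finite total length, $\mathcal{U}_n$ has only finitely many components, say $D_1,\ldots,D_m$. Accordingly $\overline{\mathcal{U}}_n=\overline{D_1}\cup\cdots\cup\overline{D_m}$ is a finite union of closed arcs, and every component of $\overline{\mathcal{U}}_n$ contains at least one $D_i$ and therefore a point of $X_u$.

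The substantive obstacle is to show $\overline{\mathcal{U}}_n\cap X_s=\emptyset$ for all sufficiently large $n$, from which properness also follows. I would argue by contradiction via a limit argument: assume there exist $n_k\to\infty$ and $V_k\in\overline{\mathcal{U}}_{n_k}\cap X_s$. By compactness of $X_s$, pass to a subsequence with $V_k\to V\in X_s$; by density of $\mathcal{U}_{n_k}$ in its closure, choose $V_k'\in\mathcal{U}_{n_k}$ with $V_k'\to V$, and factor $V_k'=A_k\cdot W_k$ with $A_k\in\SS(\A)$ and $W_k\in\mathcal{V}_{n_k}$. Since the sets $\{A\in\mathscr{S}(\A):\|A\|=1\}$ and $\RP$ are compact, a further subsequence satisfies $A_k/\|A_k\|\to A\in\mathscr{S}(\A)$ with $\|A\|=1$ and $W_k\to W$; because $\mathcal{V}_{n_k}$ shrinks to $X_u$ the limit $W$ lies in $X_u$. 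If $A$ has rank two, continuity of the action on $\RP$ yields $V_k'=(A_k/\|A_k\|)\cdot W_k\to A\cdot W$, whence $V=A\cdot W\in X_u$ by the $\mathscr{S}(\A)$-invariance of $X_u$ in Lemma~\ref{thm:dom-lemma1}, contradicting $V\in X_s$. If $A$ has rank one, then $A\in\mathscr{R}(\A)$ with $\ker A\in X_s$ and $A\R^2\in X_u$; since $W\in X_u$ is disjoint from $X_s\ni\ker A$, we have $Aw\ne 0$ for any unit vector $w\in W$, and $V_k'\to A\R^2\in X_u$, again a contradiction. This step is the technical heart of the lemma, and it is precisely where the almost multiplicativity hypothesis enters in a decisive way---both through the compactness of the normalized semigroup $\mathscr{S}(\A)$ (together with the absence of nilpotents guaranteed by Lemma~\ref{thm:dom-lemma0}) and through the $\mathscr{S}(\A)$-invariance $AX_u\subset X_u$ of Lemma~\ref{thm:dom-lemma1}.
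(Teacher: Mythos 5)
Your proof follows the same overall strategy as the paper's: verify invariance and the boundary condition directly, show every component of $\mathcal{U}_n$ meets $X_u$ by pushing a component of $\mathcal{V}_n$ through some $A\in\mathcal{S}(\A)$, and establish $\overline{\mathcal{U}}_n\cap X_s=\emptyset$ for large $n$ via a compactness/normalization argument in $\mathscr{S}(\A)$. Your limit argument for the $X_s$-disjointness (pass to $A_k/\|A_k\|\to A$, $W_k\to W\in X_u$, split on $\rank(A)$) is in substance identical to the paper's proof that $BU=V$ forces $V\in X_u\cap X_s$; the case split on rank is unnecessary but harmless, since the paper's version simply checks $Bu\neq 0$ and applies Lemma~\ref{thm:dom-lemma1} uniformly.

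Where you diverge is in proving that $\overline{\mathcal{U}}_n$ is a multicone (finitely many arcs). You argue that each component $C$ of $\mathcal{U}_n$ contains some $V\in C\cap X_u$ and hence contains the arc $B(V,1/n)\subset\mathcal{V}_n\subset\mathcal{U}_n$, giving a uniform lower bound on arclengths. The paper instead supposes $\partial\mathcal{U}_n$ has an accumulation point and squeezes points of $X_u$ between monotone boundary points. Your route is shorter, but it rests on the inclusion $\mathcal{V}_n\subset\mathcal{U}_n$, which is \emph{not} guaranteed by the definition $\mathcal{U}_n=\bigcup_{A\in\mathcal{S}(\A)}A\mathcal{V}_n$: the semigroup $\mathcal{S}(\A)$ as defined does not contain the identity, and already for $\A=\{A\}$ with $A$ proximal and $n$ large one has $\mathcal{U}_n=A\mathcal{V}_n\subsetneq\mathcal{V}_n$. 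To be fair, the paper's own argument invokes the weaker unproved inclusion $X_u\subset\mathcal{U}_n$, and both proofs are repaired by the same cosmetic fix (take $\mathcal{U}_n=\mathcal{V}_n\cup\bigcup_{A}A\mathcal{V}_n$, or read $\mathcal{S}(\A)$ as the generated monoid); so this is a shared wrinkle rather than a defect peculiar to your write-up. Still, you should not write $\mathcal{V}_n\subset\mathcal{U}_n$ as if it followed immediately from the displayed definition, because it does not.

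One further small overreach: you write that the existence of a proximal element in $\SS(\A)$ follows because ``the elements of $\mathbb{R}\SS(\A)$ approximating $B$ must eventually be proximal.'' This is true, but it is not a triviality---it is exactly the content of the separate lemma in the paper that identifies $X_u,X_s$ with the closures of unstable/stable eigendirections, using Cayley--Hamilton to rule out conformal approximants. Either cite that lemma or reproduce the short argument; as stated it reads as an assertion.
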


\begin{proof}
  Note that for all $n \in \N$ the invariance of $\overline{\mathcal{U}}_n$ and the property \eqref{i-X4} in the definition of the unstable multicone (see \S \ref{sec:subdom}) follow immediately from the definition of the set $\mathcal{U}_n$ and the continuity of each $A \in \mathcal{S}(\mathsf{A})$ as an action on $\RP$. Let us prove the property \eqref{i-X5} for all $n \in \N$. Obviously $\mathcal{V}_n$ is open, and since each $A \in \mathcal{S}(\mathsf{A})$ is invertible and therefore induces a homeomorphism of $\mathbb{RP}^1$, each $\mathcal{U}_n$ is open too. It is clear from the definition that every connected component of $\mathcal{V}_n$ intersects $X_u$. If $U \in \mathcal{U}_n$, then $U=AU'$ for some $A \in \mathcal{S}(\mathsf{A})$ and $U' \in \mathcal{V}_n$. Let $\mathcal{I}\subset \mathcal{V}_n$ be an open connected set which contains $U'$ and which also intersects $X_u$. The set $A\mathcal{I}$ then contains $U$, is connected, and intersects $AX_u$. Since $AX_u \subset X_u$ by Lemma \ref{thm:dom-lemma1}, we conclude that each connected component of $\mathcal{U}_n$ intersects $X_u$.

To show that the property \eqref{i-X3} holds for all large enough $n$, let us suppose the contrary. In this case $\overline{\mathcal{U}}_n \cap X_s$ must be nonempty for infinitely many $n \in \N$. This implies that in any prescribed neighbourhoods of $X_u$ and $X_s$ we may find a subspace $U$ in the neighbourhood of $X_u$ and a matrix $A \in \mathcal{S}(\A)$ such that $AU$ belongs to the neighbourhood of $X_s$. It follows that we may choose a sequence of subspaces $(U_n)$ converging to a limit $U \in X_u$ and a sequence $(A_n)$ of elements of $\mathcal{S}(\A)$ such that $A_nU_n$ converges to a limit $V \in X_s$. Define $B_n:=\|A_n\|^{-1}A_n \in \mathscr{S}(\A)$ for every $n \in \N$, and by passing to a subsequence if necessary we may suppose that $(B_n)$ converges to a limit $B \in \mathscr{S}(\A)$ with norm 1.

We claim that $BU=V$. Let $(u_n)$ be a sequence of unit vectors such that $u_n \in U_n$ for every $n \in \N$ and such that $(u_n)$ converges to a unit vector $u \in U$. It is enough to show that $(B_nu_n)$ converges to $Bu$ and that $Bu$ is nonzero, since we have then shown that $V=\lim_{n \to \infty} B_nU_n=BU$. To see that $Bu$ is nonzero we note that $u \in U \in X_u$ and $B \in \mathscr{S}(\A)$ with $B \neq 0$, so if $Bu=0$ then $u \in \ker B \in X_s$ and we have $U \in X_s \cap X_u$ contradicting Lemma \ref{thm:dom-lemma1}. On the other hand since
\[0\leq \|B_nu_n-Bu\| \leq \|B_nu_n-B_nu\|+\|B_nu-Bu\| \leq \|u_n-u\|+\|B_n-B\| \to 0 \]
we have $B_nu_n \to Bu$ as $n \to \infty$ as required. But the equation $BU=V$ is impossible since $BU \in X_u$ by Lemma \ref{thm:dom-lemma1} and therefore $V \in X_s \cap X_u$, contradicting Lemma \ref{thm:dom-lemma1}. We conclude that $\overline{\mathcal{U}}_n \cap X_s$ must be empty for all large enough $n$ and therefore property \eqref{i-X3} holds for all $n$ sufficiently large. %large enough $n$

  We are left to show that $\overline{\mathcal{U}}_n$ is a multicone. To that end, it suffices to show that $\partial\UU_n$ contains only finitely many points. To see this suppose for a contradiction that $U \in \mathbb{RP}^1$ is an accumulation point of a sequence $(U_k)_{k=1}^\infty$ of distinct elements of $\partial\mathcal{U}_n$. We will find it convenient to identify a small open neighbourhood $\mathcal{I}$ of $U$ with a bounded open interval $(a,b)\subset \mathbb{R}$. By passing to a subsequence if necessary we may assume that $(U_k)_{k=1}^\infty$ is monotone with respect to the natural order on $\mathcal{I}$, and without loss of generality we assume $(U_k)_{k=1}^\infty$ to be strictly increasing.

  We assert that every interval $(U_k,U_{k+2})$ contains a point of $X_u$. Since $U_{k+1}$ is in the closure of $\mathcal{U}_n$, there exists a point of $\mathcal{U}_n$ in the interval $(U_k,U_{k+2})$. Since neither $U_k$ nor $U_{k+2}$ can belong to $\mathcal{U}_n$, it follows that some connected component of $\mathcal{U}_n$ is contained wholly within the interval $(U_k,U_{k+2})$. By \eqref{i-X5}, this implies that a point of $X_u$ must lie in the interval $(U_k,U_{k+2})$. Since this is true for every $k \in \N$, it follows that $U$ is an accumulation point of $X_u$ and hence, by Lemma \ref{thm:dom-lemma1}, $U$ belongs to $X_u$. But $X_u$ is a subset of $\mathcal{U}_n$ and therefore $U \in \mathcal{U}_n$, which implies that $U_k \in \mathcal{U}_n$ for all sufficiently large $k$. This is clearly impossible since no element of $\partial \mathcal{U}_n$ can be an element of $\mathcal{U}_n$. This contradiction proves that $\partial \mathcal{U}_n$ must be finite.
\end{proof}

The above lemmas prove Theorem \ref{thm:justdomin}:

\begin{proof}[Proof of Theorem~\ref{thm:justdomin}]
  If $\mathscr{R}(\mathsf{A}) = \emptyset$, then, by Lemma~\ref{lem:onlyel}, the set $\mathsf{A}$ is strongly conformal. If $\mathscr{R}(\mathsf{A}) \ne \emptyset$, then the claim follows from Lemmas \ref{thm:dom-lemma0} and \ref{thm:dom-lemma2}.
\end{proof}

Let us next turn to the proof of the propositions.

\begin{lemma}\label{lem:elcone}
Let $A\in GL_2(\R)$ and let $\CC$ be a multicone such that $A\CC\subset\CC$. If $A$ is conformal, then $A\CC=\CC$.
\end{lemma}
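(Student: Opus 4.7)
The plan is to reduce, via a conjugation, to the case where $A$ is a positive scalar multiple of an orthogonal matrix, and then exploit the fact that such an $A$ acts as an isometry on $\RP$ preserving the standard arc-length measure.

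By conformality there exists $M\in GL_2(\R)$ with $|\det A|^{-1/2}MAM^{-1}\in O(2)$, so $MAM^{-1}=cR$ for some $c>0$ and $R\in O(2)$. Since $M$ is a linear isomorphism, it induces a homeomorphism of $\RP$ that carries projective intervals to projective intervals; in particular $M\CC$ is again a multicone, and the inclusion $A\CC\subset\CC$ (resp.\ the equality $A\CC=\CC$) is equivalent under this conjugation to $(MAM^{-1})(M\CC)\subset M\CC$ (resp.\ equality). Thus after replacing $A$ by $MAM^{-1}$ and $\CC$ by $M\CC$ we may assume $A=cR$ with $c>0$ and $R\in O(2)$.

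Under this normalization the induced action of $A$ on $\RP$ coincides with that of $R$, which is an isometry of $\RP$ for its natural angular metric and hence preserves the standard arc-length probability measure $\mu$ on $\RP$. Consequently $\mu(A\CC)=\mu(\CC)$, and since $A\CC\subset\CC$ we deduce $\mu(\CC\setminus A\CC)=0$.

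The main step is to promote this to the set-theoretic equality $A\CC=\CC$. Decompose $\CC$ into its (finitely many) connected components $I_1,\ldots,I_k$, each a non-degenerate closed projective arc. If $A\CC\subsetneq\CC$ then some $I_i$ is not contained in $A\CC$; the set $I_i\setminus A\CC$ is then the complement inside the arc $I_i$ of finitely many closed sub-arcs (namely the connected components of $A\CC$ lying in $I_i$), and hence a nonempty finite union of non-degenerate relatively open or half-open sub-arcs of $I_i$, which has strictly positive $\mu$-measure. This contradicts $\mu(\CC\setminus A\CC)=0$, so $A\CC=\CC$. I expect the main obstacle in the argument to be precisely this last step: a subset of equal measure need not coincide with the ambient set, and one must use the specific structure of a multicone—namely that each connected component is a non-degenerate arc—to rule out a nonempty but measure-zero difference.
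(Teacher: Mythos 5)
Your proof is correct and follows essentially the same route as the paper: conjugate so that $A$ acts on $\RP$ as an isometry, observe that $A$ then preserves the arc-length (Lebesgue) measure, and argue that a proper inclusion $A\CC\subsetneq\CC$ of multicones would force a strict drop in measure. The only difference is that you spell out the last step in more detail than the paper does — the paper simply asserts that a proper sub-multicone has strictly smaller measure, whereas you make explicit that this uses the closedness of $A\CC$ (so that $I_i\setminus A\CC$ is nonempty \emph{and relatively open} in the arc $I_i$, hence of positive measure). That extra care is sound and fills in exactly the point the paper glosses over.
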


\begin{proof}
  By a suitable change of basis, we may assume that $A\in O(2)$. In this case $A$, preserves Lebesgue measure on $\mathbb{RP}^1$. If $A\CC\subsetneq\CC$, then, since $\CC$ is a finite union of closed projective intervals and $A$ is a homeomorphism, $A\CC$ must have smaller Lebesgue measure than $\CC$, which is a contradiction.
\end{proof}

We remark that the converse statement is false: if $A$ is proximal and $\CC$ is a closed projective interval with one endpoint equal to $u(A)$ and the other endpoint equal to $s(A)$, then $A\CC=\CC$ but $A$ is not conformal.

If $\A\subset GL_2(\R)$ and $\A_e$ is the collection of all conformal elements of $\A$, then we write
\begin{equation*}
  \mathcal{F}(\A):=\mathcal{S}(\{|\det(A)|^{-1/2}A : A \in \A_{e}\}).
\end{equation*}

\begin{lemma} \label{thm:dom-lemma3}
  Let $\A\subset GL_2(\R)$ be such that $\mathscr{R}(\A)\neq\emptyset$ and $\A_e$ be the collection of all conformal elements of $\A$. If $\CC$ is an invariant unstable multicone of $\A$, then $\A_e = \{A \in \A : A\CC = \CC\}$ is strongly conformal and $\mathcal{F}(\A)$ is finite.
\end{lemma}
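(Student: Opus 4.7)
The argument splits into three components: (a) identifying $\A_e$ with $\{A\in\A:A\CC=\CC\}$, (b) strong conformality of $\A_e$, and (c) finiteness of $\mathcal{F}(\A)$.

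For (a), one inclusion is immediate from Lemma~\ref{lem:elcone}. Conversely, suppose $A\in\A$ is non-conformal and satisfies $A\CC=\CC$. If $A$ is proximal, then $s(A)\in X_s(\A)$; combining $A^{-n}\CC=\CC$ with $A^{-n}v\to s(A)$ for any $v\in\CC\setminus\{u(A)\}$ yields $s(A)\in\overline{\CC}=\CC$, contradicting $\CC\cap X_s(\A)=\emptyset$. If $A$ is parabolic with eigenspace $e$, then $A^n v\to e$ for $v\neq e$ places $e\in\CC$; meanwhile for a suitable proximal $P\in\SS(\A)$ (which exists by the unstable multicone hypothesis) the products $PA^n$ are eventually proximal with stable directions converging to $e$, so $e\in X_s(\A)$, again contradicting $\CC\cap X_s(\A)=\emptyset$. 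The same dichotomy propagates by induction: every element of $\mathcal{S}(\A_e)$ is conformal and satisfies $A\CC=\CC$, so $\mathcal{F}(\A)$ consists of conformal matrices of determinant $\pm1$.

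For (b), the main task is to show that $\mathcal{F}(\A)$ is bounded. If not, choose $A_n\in\mathcal{F}(\A)$ with $\|A_n\|\to\infty$; since $|\det A_n|=1$, also $\|A_n^{-1}\|=\|A_n\|$. After passing to subsequences, $A_n/\|A_n\|\to B$ and $A_n^{-1}/\|A_n\|\to B'$ with $B,B'$ of rank one, and the identity $BB'=0$ forces $\ker B=B'\R^2$. The invariance relations $A_n\CC=\CC=A_n^{-1}\CC$ then place both $B\R^2$ and $\ker B$ in $\CC$. Since each $A_n$ permutes the finitely many components of $\CC$, a further pigeonhole extraction yields a common permutation, and analysing the asymptotic action of $A_n$ on each component forces $\CC$ to be a single closed projective interval with $B\R^2\in\partial\CC$. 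For a proximal $P\in\SS(\A)$, however, the product $A_nP$ is proximal for large $n$ with $u(A_nP)\to B\R^2$, placing $B\R^2\in X_u(\A)$ and contradicting $\partial\CC\cap X_u(\A)=\emptyset$. With boundedness established, the closure $\mathcal{G}=\overline{\mathcal{F}(\A)}$ is a compact subsemigroup of $GL_2(\R)$ consisting of conformal matrices with $|\det|=1$; the argument of Lemma~\ref{lem:onlyel} shows $\mathcal{G}$ is in fact a group, and averaging the standard inner product against Haar measure on $\mathcal{G}$ produces $M\in GL_2(\R)$ with $\mathcal{G}\subset MO(2)M^{-1}$, proving that $\A_e$ is strongly conformal.

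For (c), the compact group $\mathcal{G}\subset MO(2)M^{-1}$ preserves $\CC$. If $\mathcal{G}$ contained $MSO(2)M^{-1}$ entirely, then the transitive action of $SO(2)$ on $\RP$ would force $\CC=\RP$, which is impossible; hence $\mathcal{G}\cap MSO(2)M^{-1}$ is a proper closed subgroup of $SO(2)$, necessarily finite cyclic. Together with at most one reflection coset, $\mathcal{G}$ is finite, and so is $\mathcal{F}(\A)\subset\mathcal{G}$. The most delicate point in this plan is the assertion, used both in (a) and in (b), that certain products such as $PA^n$ or $A_nP$ are proximal for all large $n$: this reduces to a nonvanishing condition on the trace of the relevant rank-one limit, and one must have the flexibility to substitute another proximal element of $\SS(\A)$ when a given choice fails, which is where the invariant multicone hypothesis is used most subtly.
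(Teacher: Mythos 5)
Your decomposition is genuinely different from the paper's. You establish conformality of $\{A\in\A:A\CC=\CC\}$ first (by ruling out proximal and parabolic elements), then prove boundedness of $\mathcal{F}(\A)$ by a compactness/rank-one-limit argument, pass to the closure to get a compact group conjugate into $O(2)$, and finally deduce finiteness from the fact that the group cannot contain a full rotation circle. The paper inverts this logic: it works with the set $\A_e'=\{|\det A|^{-1/2}A:A\in\A,\ A\CC=\CC\}$ \emph{without} first knowing its elements are conformal, and proves finiteness of $\mathcal{S}(\A_e')$ directly from the permutation action on the finite set $\partial\CC$ (a matrix of unit determinant fixing three or more projective points is $\pm I$; the two-boundary-point case is handled by a short diagonal/antidiagonal analysis using $X_s\cap\CC=\emptyset$). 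Strong conformality and the characterisation of $\A_e$ then fall out as corollaries of finiteness. The paper's route is shorter and avoids every limit and trace computation you use.

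There are two places where your sketch has genuine gaps. First, in the parabolic case of (a), your plan is to produce a proximal $P\in\SS(\A)$ with $PA^n$ eventually proximal and $s(PA^n)\to e$. This requires $\tr(PB)\neq 0$ where $B$ is the nilpotent rank-one limit of $A^n/\|A^n\|$, which is equivalent to $Pe\neq e$. You flag this as delicate, but the ``substitute another $P$'' plan does not always work: if every proximal $P\in\SS(\A)$ satisfies $u(P)=e$, then no admissible $P$ exists, $X_u=\{e\}$, and your target conclusion $e\in X_s$ is simply false. (The situation still leads to a contradiction, but by a different mechanism: $A$ must permute the two boundary points of the single component of $\CC$, so $A^2$ fixes $\ge 3$ projective points, hence $A^2=\pm I$, contradicting that $A^2$ is parabolic. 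This one-line permutation argument handles the parabolic case completely and is essentially the paper's mechanism.) Second, the pivotal step in (b), ``analysing the asymptotic action of $A_n$ on each component forces $\CC$ to be a single closed projective interval with $B\R^2\in\partial\CC$,'' is asserted rather than proved. It can be made rigorous: $A_n\partial I_j=\partial I_{\pi(j)}$ is a fixed two-point set, yet $A_n v\to B\R^2$ for $v\neq\ker B$, forcing $\ker B\in I_j$ for every $j$ and hence a single component; then the antidiagonal form of an unbounded conformal $A_n$ with $|\det|=1$ that swaps the two boundary points gives $B\R^2=\ker B\in\partial\CC$. But as written this step is the crux of your boundedness argument and is left vague. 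The subsequent trace condition $\tr(BP)\neq 0$ also needs the same dichotomy ($P(B\R^2)=B\R^2$ already gives $B\R^2\in X_u$ directly since $B\R^2\in\CC$ excludes $X_s$) spelled out. In short: your plan is sound and can be completed, but the paper's combinatorial argument via the action on $\partial\CC$ sidesteps all of these subtleties at once.
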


\begin{proof}
  Since $\A$ has an invariant multicone $\CC$, it follows from Lemma~\ref{lem:elcone} that $A\CC=\CC$ for all $A\in\A_e$. Hence $\A_e \subset \{A\in\A: A\CC=\CC\}$.

  Write $\A_{e}' = \{|\det(A)|^{-1/2}A : A\in\A\text{ and }A\CC=\CC\}$. Let us first assume that $\#\partial \CC>2$. Let $B_1,B_2 \in \mathcal{S}(\A_{e}')$ and suppose that $B_1$ and $B_2$ induce the same permutation of $\partial\mathcal{C}$. Then $B_1^{-1}B_2$ fixes every point of $\partial\mathcal{C}$ and therefore has more than $2$ invariant subspaces and is necessarily equal to $\pm I$. It follows that in this case $\mathcal{S}(\A_{e}')$ has at most $2(\#\partial\mathcal{C})!$ distinct elements. Let us now assume that $\#\partial \mathcal{C}=2$. Write $\partial \mathcal{C}=\{U_1,U_2\}$, and let $u_1 \in U_1$ and $u_2 \in U_2$ be so that $\{u_1,u_2\}$ is a basis for $\mathbb{R}^2$. Every element of $\mathcal{S}(\A_{e}')$ preserves $\partial\mathcal{C}$ and hence is either diagonal or antidiagonal in this basis (where by an antidiagonal matrix we mean a  $2\times 2$ matrix with both main diagonal entries equal to zero and both other entries nonzero). Let $D$ be the matrix which $Du_1=u_1$ and $Du_2=-u_2$. A diagonal element of $\mathcal{S}(\A_{e}')$ cannot be proximal since then either $U_1$ or $U_2$ would be the stable space of that matrix contradicting the property $X_s \cap \mathcal{C} =\emptyset$ of the unstable multicone $\CC$. It follows that every diagonal element of $\mathcal{S}(\A_{e}')$ must belong to $\{\pm I,\pm D\}$. Let $A_1,\ldots,A_\ell$ be the anti-diagonal elements of $\A_{e}'$ and define $S=\{\pm I, \pm D\} \cup \{\pm A_1,\ldots, \pm A_\ell\} \cup \{\pm DA_1,\ldots,\pm DA_\ell\}$. The set $S$ is a semigroup since $A_iD=-DA_i$ and since each $A_iA_j$ is diagonal and hence equal to $\pm I$ or $\pm D$. In particular, $\mathcal{S}(\A_{e}')$ is contained in a finite semigroup. Thus, $\A_{e}'$ is strongly conformal, which implies that $\{A\in\A: A\CC=\CC\}\subset\A_{e}$.
\end{proof}

\begin{lemma} \label{thm:dom-lemma4}
  Let $\A\subset GL_2(\R)$ be such that $\A$ has an invariant unstable multicone $\CC$ and $\SS(\A)$ does not contain parabolic elements. Let $\A_{e}$ be the collection of all conformal elements of $\A$. Then
  $$
    A_1F_1\cdots A_nF_n\CC \subset \CC^o
  $$
  for all $n\geq (\#\partial\CC)^2+1$, $A_1,\ldots,A_n \in \mathsf{A} \setminus \A_{e}$, and $F_1,\ldots,F_n \in \mathcal{F}(\A)$.
\end{lemma}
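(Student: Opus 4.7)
The plan is to argue by contradiction: assume $A_1F_1\cdots A_nF_n\CC\not\subset\CC^o$, and extract from this a matrix in $\mathcal{S}(\A)$ (up to positive scalar) with a forbidden fixed point. Two preliminary facts drive the argument. First, every $F\in\mathcal{F}(\A)$ satisfies $F\CC=\CC$, because each generator $|\det A|^{-1/2}A$ of $\mathcal{F}(\A)$ is conformal with $A\CC\subset\CC$, so Lemma \ref{lem:elcone} applies and products preserve this equality. Second, $A\CC\subsetneq\CC$ strictly for every $A\in\A\setminus\A_e$: the no-parabolic hypothesis together with non-conformality forces $A$ to be proximal, and since $s(A)\notin\CC$, iterates $A^k$ converge uniformly on the compact set $\CC$ to $u(A)$ in Hausdorff metric, incompatible with $A\CC=\CC$ given that $\CC$ has nonempty interior.

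Granting these, each block $B_k:=A_kF_k$ maps $\CC^o$ into $\CC^o$ (since $F_k$ is a bijection of $\CC$ preserving the boundary, and $A_k\CC\subset\CC$ forces $A_k\CC^o\subset\CC^o$ by openness). If $A_1F_1\cdots A_nF_nV_0\in\partial\CC$ for some $V_0\in\CC$, then setting $W_0:=V_0$ and $W_k:=B_{n-k+1}W_{k-1}$, the invariance of $\CC^o$ propagated forward forces $W_0,W_1,\ldots,W_n\in\partial\CC$. I would then apply pigeonhole to the $n$ consecutive pairs $(W_{k-1},W_k)\in\partial\CC\times\partial\CC$: since $n\ge(\#\partial\CC)^2+1$, two of them coincide, say $(W_{i-1},W_i)=(W_{j-1},W_j)$ with $i<j$. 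Defining $N:=B_{n-j+2}\cdots B_{n-i+1}$ (a product of $j-i\ge1$ blocks), the recursion yields $W_{j-1}=NW_{i-1}$, so $N$ fixes the boundary point $W_{i-1}\in\partial\CC$.

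To close the argument I trichotomise $N$. Since each $F_k$ is a positive scalar multiple of an element of $\mathcal{S}(\A_e)\subset\mathcal{S}(\A)$, $N$ equals a positive scalar times some $N'\in\mathcal{S}(\A)$ and has identical action on $\RP$. The matrix $N'$ cannot be parabolic by hypothesis. It cannot be conformal either: starting from the rightmost factor one has $B_{n-i+1}\CC=A_{n-i+1}\CC\subsetneq\CC$, and each subsequent block sends this strict subset to a strict subset of $\CC$ (using that $F_k$ is a bijection of $\CC$ and $A_k\CC\subsetneq\CC$), yielding $N\CC\subsetneq\CC$; if $N$ were conformal then Lemma \ref{lem:elcone} would give $N\CC=\CC$, a contradiction. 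Hence $N'$ is proximal, so its two projective fixed points are $u(N')\in X_u(\A)\subset\CC^o$ and $s(N')\in X_s(\A)\subset\RP\setminus\CC$, neither lying in $\partial\CC$; this contradicts $NW_{i-1}=W_{i-1}$ with $W_{i-1}\in\partial\CC$. The step requiring most care is the strict inclusion $N\CC\subsetneq\CC$, since one must carefully propagate strictness through the alternation of conformal $F_k$'s and strictly contracting $A_k$'s rather than merely obtaining $N\CC\subset\CC$.
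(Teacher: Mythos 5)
Your proof is correct and follows essentially the same strategy as the paper's: pigeonhole on $(\#\partial\CC)^2+1$ pairs of boundary points to extract a nontrivial subproduct fixing a point of $\partial\CC$, then exclude the parabolic case by hypothesis, the conformal case via Lemma~\ref{lem:elcone} together with the strict inclusion $A_k\CC\subsetneq\CC$, and the proximal case via the conditions $\partial\CC\cap X_u=\emptyset$ and $\CC\cap X_s=\emptyset$. The only organizational difference is that you track a single forward orbit $W_0,\ldots,W_n$ (using the fact that each block $A_kF_k$ maps $\CC^o$ into $\CC^o$ to show all iterates stay on $\partial\CC$), whereas the paper selects an independent boundary pair $(V_\ell,W_\ell)$ for each suffix; your single-orbit version would in fact let you pigeonhole on the points $W_k$ alone and get by with $n\geq\#\partial\CC$, though you keep the pair pigeonhole and hence the same constant.
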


\begin{proof}
  It is sufficient to show that every point of $\partial\mathcal{C}$ is mapped into $\mathcal{C}^\circ$ by $A_1F_1\cdots A_nF_n$. Clearly, if there exists $\ell \in \{1,\ldots,n\}$ such that $A_\ell F_\ell\cdots A_nF_n\CC\subset\CC^o$, then our claim follows.

  Suppose for a contradiction that there exist $n\geq (\#\partial\CC)^2+1$, $A_1,\ldots,A_n\in\A\setminus\A_{e}$, and $F_1,\ldots,F_n\in\mathcal{F}(\A)$ such that for every $\ell \in \{1,\ldots,n\}$ there exist $V_\ell,W_\ell\in\partial\CC$ for which
  $$
    A_\ell F_\ell\cdots A_nF_nV_{\ell}=W_{\ell}.
  $$
  Since $n\geq (\#\partial\CC)^2+1$, there exist $\ell_1<\ell_2$ such that $V_{\ell_1}=V_{\ell_2}$ and $W_{\ell_1}=W_{\ell_2}$. Hence,
  $$
    A_{\ell_1}F_{\ell_1}\cdots A_{\ell_2-1}F_{\ell_2-1}W_{\ell_2}=W_{\ell_2}.
  $$
  Thus, if $A_{\ell_1}F_{\ell_1}\cdots A_{\ell_2-1}F_{\ell_2-1}$ is proximal, then $W_{\ell_2}\in X_u\cup X_s$. This is impossible, since $\partial\CC\cap (X_s\cup X_u)=\emptyset.$ If $A_{\ell_1}F_{\ell_1}\cdots A_{\ell_2-1}F_{\ell_2-1}$ is conformal, then $A_{\ell_1}F_{\ell_1}\cdots A_{\ell_2-1}F_{\ell_2-1}\CC=\CC$ by Lemma~\ref{lem:elcone}. This is also impossible, since $\CC\supsetneq A_{\ell_1}\CC\supset A_{\ell_1}F_{\ell_1}\cdots A_{\ell_2-1}F_{\ell_2-1}\CC$ by Lemma~\ref{thm:dom-lemma3}.
\end{proof}

\begin{lemma}\label{lem:fin}
  Let $\A\subset GL_2(\R)$ be such that $\A$ has an invariant unstable multicone $\CC$ and $\SS(\A)$ does not contain parabolic elements. Let $\A_{e}$ be the collection of all conformal elements of $\A$. If $\A \setminus \A_e$ is compact, then
  $$
    \B=\{A_1A_2\colon A_1\in\A\setminus\A_{e}\text{ and }A_2\in\mathcal{F}(\A)\}
  $$
  has a strongly invariant multicone.
\end{lemma}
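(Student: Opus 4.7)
My plan is to reduce the problem to the Bochi--Gourmelon characterization of domination in \cite[Theorem~B]{BochiGourmelon09}: a compact subset of $GL_2(\R)$ admits a strongly invariant multicone if and only if it is dominated. It therefore suffices to show that $\B$ is compact and dominated.

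First I would check that $\B$ is compact. By hypothesis $\A\setminus\A_e$ is compact, and by Lemma~\ref{thm:dom-lemma3} the semigroup $\mathcal{F}(\A)$ is finite, so $\B$ is the continuous image of the compact set $(\A\setminus\A_e)\times\mathcal{F}(\A)$ under the multiplication map. Setting $N=(\#\partial\CC)^2+1$, the set $\B^N:=\{B_1\cdots B_N:B_i\in\B\}$ is likewise compact, and Lemma~\ref{thm:dom-lemma4} gives $g\CC\subset\CC^o$ for every $g\in\B^N$. A standard continuity-and-compactness argument (the map $g\mapsto\inf_{x\in g\CC}d(x,\partial\CC)$ is lower semicontinuous on $\B^N$ and hence attains a positive minimum) upgrades this pointwise inclusion to a uniform separation, so $\CC$ is a strongly invariant multicone for $\B^N$. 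Applying \cite[Theorem~B]{BochiGourmelon09} shows that $\B^N$ is dominated: there exist $C>0$ and $\tau\in(0,1)$ such that $|\det(g_1\cdots g_k)|/\|g_1\cdots g_k\|^2\le C\tau^{k}$ for all $g_1,\ldots,g_k\in\B^N$.

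The next step is to bootstrap this to domination of $\B$ itself. For any $B_1\cdots B_n\in\B^n$, write $n=kN+r$ with $0\le r<N$, group the first $kN$ letters into $g_i=B_{(i-1)N+1}\cdots B_{iN}\in\B^N$, and let $h=B_{kN+1}\cdots B_n$. Compactness of $\B$ supplies the finite bounds $M=\sup_{B\in\B}\|B^{-1}\|$ and $D=\sup_{B\in\B}|\det(B)|$. The estimates $\|g_1\cdots g_k h\|\ge M^{-r}\|g_1\cdots g_k\|$ and $|\det(g_1\cdots g_k h)|\le D^{r}|\det(g_1\cdots g_k)|$ combine with the domination of $\B^N$ to yield
\[
\frac{|\det(B_1\cdots B_n)|}{\|B_1\cdots B_n\|^2}\le M^{2(N-1)}D^{N-1}C\tau^{k}\le C'\bigl(\tau^{1/N}\bigr)^{n}
\]
for a suitable constant $C'>0$, so $\B$ is dominated. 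One final application of \cite[Theorem~B]{BochiGourmelon09} then furnishes a strongly invariant multicone for $\B$.

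The main obstacle is the bootstrap step: absorbing the variable tail $h\in\B^r$ of length $r<N$ requires the uniform bounds $M$ and $D$, and this is exactly where the compactness hypothesis on $\A\setminus\A_e$ (which, together with the finiteness of $\mathcal{F}(\A)$, makes $\B$ compact) enters decisively.
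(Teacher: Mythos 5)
Your argument is correct and follows essentially the same route as the paper: Lemma~\ref{thm:dom-lemma4} gives a strongly invariant multicone for $\B^{(\#\partial\CC)^2+1}$, compactness of $\B$ plus \cite[Theorem~B]{BochiGourmelon09} yield domination of that power, and a division-with-remainder bootstrap (using compactness to control the tail) extends domination to $\B$ itself, after which \cite[Theorem~B]{BochiGourmelon09} is invoked once more. The only cosmetic differences are your use of the $|\det|/\|\cdot\|^2$ formulation of domination rather than the paper's singular-value-ratio version, and the unnecessary ``uniform separation'' remark (the definition of strongly invariant multicone already requires only the pointwise inclusion $g\CC\subset\CC^o$).
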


\begin{proof}
  Write $m=(\#\partial\CC)^2+1$ and note that, by Lemma~\ref{thm:dom-lemma4}, $\B^{m}$ has a strongly invariant multicone. Since $\A\setminus\A_{e}$ is compact by the assumption and $\mathcal{F}(\A)$ is finite by Lemma \ref{thm:dom-lemma3}, $\B^m$ is compact. Hence, by \cite[Theorem~B]{BochiGourmelon09}, $\B^m$ is dominated, i.e.\ there exist constants $C>0$ and $\tau>1$ such that
  $$
    \frac{\|B_1\cdots B_n\|}{\|(B_1\cdots B_n)^{-1}\|^{-1}}\geq C\tau^n.
  $$
  for all $B_1,\ldots,B_n\in\B^m$ and all $n\in\N$. Choose $k \in \N$ and let $A_iF_i\in\B$ for all $i \in \{1,\ldots,k\}$. Write $k=qm+p$, where $q\in\N\cup\{0\}$ and $p \in \{0,\ldots,m-1\}$. Then
  \begin{align*}
    \frac{\|A_1F_1\cdots A_kF_k\|}{\|(A_1F_1\cdots A_kF_k)^{-1}\|^{-1}}
    & = \dfrac{\|B_1\cdots B_q\cdot A_{k-p}F_{k-p}\cdots A_kF_k\|}{\|(B_1\cdots B_q\cdot A_{k-p}F_{k-p}\cdots A_kF_k)^{-1}\|^{-1}}\\
    & \geq \dfrac{\|B_1\cdots B_q\|\|(A_{k-p}F_{k-p}\cdots A_kF_k)^{-1}\|^{-1}}{\|(B_1\cdots B_q)^{-1}\|^{-1}\|A_{k-p}F_{k-p}\cdots A_kF_k\|}\\
    & \geq C\tau^{q}\dfrac{\|(A_{k-p}F_{k-p}\cdots A_kF_k)^{-1}\|^{-1}}{\|A_{k-p}F_{k-p}\cdots A_kF_k\|}.
  \end{align*}
  By choosing $C'=C\tau^{-1}\min_{\ell\in\{1,\ldots,m-1\}} \|(A_{1}F_{1}\cdots A_\ell F_\ell)^{-1}\|^{-1}/\|A_{1}F_{1}\cdots A_\ell F_\ell\|$ and $\tau'=\tau^{1/m}$, it follows again from \cite[Theorem~B]{BochiGourmelon09} that $\B$ has a strongly invariant multicone.
\end{proof}

The following lemma is \cite[Lemma~2.2]{BochiMorris15}.

\begin{lemma}\label{lem:Morris}
  Let $\CC_0,\CC\subset\RP$ be multicones such that $\CC_0\subset\CC^o$. Then there exists a constant $\kappa_0>0$ such that $\|A|V\|\geq \kappa_0\|A\|$ for all $V\in\CC_0$ and for every matrix $A\in GL_2(\R)$ with $A\CC\subset\CC_0$.
\end{lemma}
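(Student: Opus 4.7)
The plan is to combine a singular value analysis with a compactness-and-contradiction argument. First I would normalise $\|A\|=1$ (the conclusion is scale-invariant), so that $\sigma_1(A)=1$ and $r=r_A:=\sigma_2(A)\in(0,1]$, and write $v_1(A),v_2(A)\in\RP$ for the orthogonal right singular directions of $A$. For a unit $v$ spanning $V\in\RP$ the SVD gives $\|Av\|^2=\cos^2\alpha+r^2\sin^2\alpha$ with $\alpha=\angle(V,v_1(A))$, equivalently $\|A|V\|^2=\sin^2 d+r^2\cos^2 d$ where $d=\angle(V,v_2(A))$. Thus the lemma reduces to bounding this quantity away from $0$ uniformly for $V\in\CC_0$ and admissible $A$.

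I would argue by contradiction. Assume no such $\kappa_0$ exists and pick sequences $A_n$ with $\|A_n\|=1$ and $A_n\CC\subset\CC_0$, together with $V_n\in\CC_0$ and unit $v_n\in V_n$ satisfying $\|A_n v_n\|\to 0$. Passing to subsequences, $A_n\to A_\infty$ in $M_2(\R)$, $V_n\to V_\infty\in\CC_0$, and $v_n\to v_\infty$ spans $V_\infty$. Then $\|A_\infty\|=1$ and $A_\infty v_\infty=0$, so $A_\infty$ has rank one with $\ker A_\infty=V_\infty\in\CC_0\subset\CC^o$. Since the singular values $1,0$ of $A_\infty$ are distinct, right singular directions depend continuously on the matrix near $A_\infty$; in particular $v_2(A_n)\to V_\infty$ and $r_n:=r_{A_n}\to 0$.

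The decisive step is a Jacobian estimate for the projective action. I would choose $\rho>0$ such that the $2\rho$-arc-neighbourhood of $V_\infty$ in $\RP$ lies in $\CC^o$; for all large $n$ the $\rho$-arc-neighbourhood $I_n$ of $v_2(A_n)$ then lies in $\CC$. In an angular coordinate $\theta$ on $\RP$ (of total length $\pi$) aligning $v_1(A_n)\leftrightarrow 0$ and $v_2(A_n)\leftrightarrow \pi/2$, a direct computation shows that $A_n\colon\RP\to\RP$ is a diffeomorphism with derivative $r_n/(\cos^2\theta+r_n^2\sin^2\theta)$. Substituting $\theta=\pi/2+\beta$ and using $\sin^2\beta\leq\beta^2$ should yield
\[
|A_n I_n|\;\geq\;2\int_0^{\rho}\frac{r_n}{\beta^2+r_n^2}\,d\beta\;=\;2\arctan(\rho/r_n)\;\longrightarrow\;\pi.
\]
But $A_n I_n\subset A_n\CC\subset\CC_0$ and $|\CC_0|<\pi$, since $\CC_0$ is a proper subset of $\RP$; this is the desired contradiction.

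The hard part will be the Jacobian estimate: one must verify that the expansion of $A_n$ near its small singular direction $v_2(A_n)$ is strong enough to blow up any fixed-size arc-neighbourhood to length tending to $\pi$. The geometric reason this is true is that $A_n$ is a bijection of the circle $\RP$ of total length $\pi$, so the strong projective contraction near $v_1(A_n)$ must be compensated by equally strong expansion near $v_2(A_n)$ as $r_n\to 0$, eventually forcing the image to exceed $|\CC_0|$. Everything else — the normalisation, the extraction of the rank-one limit, and the continuity of singular directions — is formal.
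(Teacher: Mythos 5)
The paper does not prove this lemma; it is cited directly as \cite[Lemma~2.2]{BochiMorris15}, so there is no in-paper argument to compare against. Your proof is correct and complete. The normalisation to $\|A\|=1$ is legitimate, the subsequential limit $A_\infty$ is indeed rank one with $\ker A_\infty = V_\infty \in \CC_0 \subset \CC^o$ (here compactness of the multicone $\CC_0$ is used to keep $V_\infty\in\CC_0$), the singular data vary continuously near $A_\infty$ because its singular values $1>0$ are distinct, and the Jacobian $d\theta'/d\theta = r/(\cos^2\theta+r^2\sin^2\theta)$ for the projective action of a matrix with singular values $1,r$ is exactly right. The choice of $\rho$ via the $2\rho$-neighbourhood of $V_\infty$ correctly guarantees $I_n\subset\CC$ for large $n$, and the integral bound $|A_nI_n|\ge 2\arctan(\rho/r_n)\to\pi$ then contradicts $A_nI_n\subset A_n\CC\subset\CC_0$, since a multicone is by definition a proper closed subset of $\RP$ and hence has angular length strictly less than $\pi$. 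The step you flag as the ``hard part'' --- that contraction near $v_1(A_n)$ forces compensating expansion near $v_2(A_n)$ as $r_n\to0$ --- is indeed what the explicit $\arctan$ computation delivers, and the argument is sound.
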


We are now ready to prove the propositions:

\begin{proof}[Proof of Proposition~\ref{prop:connect}]
  The assertion \eqref{item:this} follows immediately from Lemma~\ref{thm:dom-lemma3}. Let us verify \eqref{item:that}. If $\A_{e} = \A$, then $\SS(\A)$ is strongly conformal since $\A$ is. This means that $\SS(\A)$ does not contain an proximal matrix and thus, $\A$ cannot have an unstable multicone by definition. Therefore, \eqref{item:that} holds.

  To prove the final claim, it is sufficient to show that, by assuming $\A\setminus\A_e$ to be compact, there exists an invariant multicone $\CC$ such that $A\CC\subset\CC^o$ for all $A\in\A\setminus\A_{e}$ and $A\CC=\CC$ for all $A\in\A_{e}$. By Lemma~\ref{thm:dom-lemma3}, the set $\mathcal{F}(\A)$ is finite. Therefore, the set $\B=\{A_1A_2 : A_1\in\A\setminus\A_{e}\text{ and }A_2\in\mathcal{F}(\A)\}$ is compact and, by Lemma~\ref{lem:fin}, it has a strongly invariant multicone $\CC_0$. Defining
  $$
    \CC=\bigcup_{F\in\mathcal{F}(\A)}F\CC_0,
  $$
  we have
  $$
    A\CC = \bigcup_{F\in\mathcal{F}(\A)} AF\CC_0 \subset\CC_0^o\subset\CC^o.
  $$
  for all $A\in\A\setminus\A_{e}$. We have finished the proof since for any $A\in\A_{e}$, $A\CC=\CC$ holds trivially.
\end{proof}

\begin{proof}[Proof of Proposition~\ref{prop:converse}]
  Let $\eps>0$ and define
  $$
    \CC_0=\bigcup_{F\in\mathcal{F}(\A)} F\biggl( \biggl\{ U \in \RP : d(U,V) \le \eps \text{ for some } V \in \bigcup_{A\in\A_{h}}A\CC \biggr\} \biggr)
  $$
  Recall that $\mathcal{F}(\A)$ is finite by Lemma~\ref{thm:dom-lemma3}. By compactness of $\A_h$, we may choose $\eps>0$ small enough so that $\CC_0\subset\CC^o$, $A\CC\subset\CC_0$ for all $A\in\A_{h}$, and $A\CC_0=\CC_0$ for all $A\in\A_{e}$. Observe that every element $A\in\mathcal{S}(\A_h \cup \A_e)$ can be written in the form $(c_0c_1\cdots c_k)F_0\prod_{i=1}^kA_iF_i$, where $c_i\in\R\setminus\{0\}$, $k\in\N\cup\{0\}$, $A_i\in\A_{h}$, and $F_i\in\mathcal{F}(\A)$. Therefore, $A\CC\subset\CC_0$ for all $A\in\mathcal{S}(\A_{h}\cup\A_{e})\setminus\mathcal{S}(\A_{e})$.

  By Lemma~\ref{lem:Morris}, there exists a constant $\kappa_0=\kappa_0(\CC_0,\CC)$ such that $\|A|V\|\geq\kappa_0\|A\|$ for all $V\in\CC_0$ and for every matrix $A \in GL_2(\R)$ with $A\CC\subset\CC_0$. Hence,
  $$
    \|AB\|\geq\|AB|V\|=\|A|BV\|\|B|V\|\geq\kappa_0^2\|A\|\|B\|.
  $$
  for all $A,B\in\mathcal{S}(\A_{h}\cup\A_{e})\setminus\mathcal{S}(\A_{e})$. If $A\in\mathcal{S}(\A_{e})$ or $B\in\mathcal{S}(\A_{e})$, then $\|AB\|\geq \kappa'\|A\|\|B\|$ holds trivially for some $\kappa'>0$ by the finiteness of $\mathcal{F}(\A)$.
\end{proof}

\section{Classification of equilibrium states}

This section is devoted to the proofs of Propositions \ref{prop:gibbstype} and \ref{prop:bernoulli}, and Theorems \ref{thm:fortuples} and \ref{thm:holder}. In order to keep the proof of Theorem \ref{thm:holder} as readable as possible, we have postponed the proof of a key technical lemma, Lemma \ref{prop:key}, into \S \ref{sec:3m}. Before we start with the proof of the propositions, we state a couple of auxiliary lemmas.

We recall that $\lambda_u(A)$ is the eigenvalue of $A$ with the largest absolute value, and similarly, $\lambda_s(A)$ is the eigenvalue of $A$ with the smallest absolute value. Note that $|\lambda_u(A)|=\|A|u(A)\|$ and $|\lambda_s(A)|=\|A|s(A)\|$, where $u(A)$ is the eigenspace corresponding to $\lambda_u(A)$ and $s(A)$ the eigenspace corresponding to $\lambda_s(A)$.

The following two lemmas are special cases of the result of Protasov and Voynov; see \cite[Theorem~2]{ProtVoy}. In order to keep the paper as self-contained as possible, we give here alternative proofs.

\begin{lemma}\label{lem:eigenspace}
  Let $\A=(A_1,\ldots,A_N)\in GL_2(\R)^N$ be such that all the elements of $\A$ are proximal. Then the following two statements are equivalent:
  \begin{enumerate}
    \item\label{it:eig1} $\lambda_u(A_iA_j)=\lambda_u(A_i)\lambda_u(A_j)$ for all $i,j$,
    \item\label{it:eig2} $u(A_i)=u(A_j)$ for all $i,j$ or $s(A_i)=s(A_j)$ for all $i,j$.
  \end{enumerate}
\end{lemma}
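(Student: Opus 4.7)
The direction $(2) \Rightarrow (1)$ is easy. Suppose all $u(A_i)$ coincide with a common line $u$. Then $u$ is an eigenvector of $A_iA_j$ with eigenvalue $\lambda_u(A_i)\lambda_u(A_j)$. Since $\det(A_iA_j)=\lambda_u(A_i)\lambda_s(A_i)\lambda_u(A_j)\lambda_s(A_j)$, the other eigenvalue of $A_iA_j$ equals $\lambda_s(A_i)\lambda_s(A_j)$, which is strictly smaller in absolute value by proximality of $A_i$ and $A_j$. Hence $\lambda_u(A_iA_j)=\lambda_u(A_i)\lambda_u(A_j)$. The case where all $s(A_i)$ coincide is symmetric.

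For $(1) \Rightarrow (2)$, the plan has two steps. \textbf{Step 1 (pairwise dichotomy):} Fix $i,j$ and work in a basis where $A_i$ is diagonalized, so $A_i=\mathrm{diag}(\lambda_u(A_i),\lambda_s(A_i))$ and $A_j=\bigl(\begin{smallmatrix}a & b \\ c & d\end{smallmatrix}\bigr)$. From the assumption together with $\det(A_iA_j)=\det(A_i)\det(A_j)$ one has $\tr(A_iA_j)=\lambda_u(A_i)\lambda_u(A_j)+\lambda_s(A_i)\lambda_s(A_j)$. Computing directly, $\tr(A_iA_j)=\lambda_u(A_i)a+\lambda_s(A_i)d$ and $a+d=\lambda_u(A_j)+\lambda_s(A_j)$, which combined with $\lambda_u(A_i)\neq\lambda_s(A_i)$ forces $a=\lambda_u(A_j)$ and $d=\lambda_s(A_j)$. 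The relation $\det A_j=ad-bc=\lambda_u(A_j)\lambda_s(A_j)$ then gives $bc=0$. If $b=0$ then $u(A_i)$ is an eigenvector of $A_j$ with eigenvalue $\lambda_u(A_j)$, so $u(A_i)=u(A_j)$; if $c=0$ then $s(A_i)=s(A_j)$.

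\textbf{Step 2 (combinatorial globalization):} Define equivalence relations on $\{1,\ldots,N\}$ by $i\sim_u j \Leftrightarrow u(A_i)=u(A_j)$ and $i\sim_s j \Leftrightarrow s(A_i)=s(A_j)$. Step~1 says every pair is related under at least one of $\sim_u,\sim_s$. We must show one of the two relations has a single class. Suppose not; choose $i,j$ with $i\not\sim_u j$ (hence $i\sim_s j$) and $k,l$ with $k\not\sim_s l$ (hence $k\sim_u l$). A short case analysis yields a contradiction: if $\{i,j\}\cap\{k,l\}\neq\emptyset$, say $i=k$, then $i\sim_u l$ rules out $j\sim_u l$ (else $i\sim_u j$), so $j\sim_s l$, but then transitivity through $j$ gives $i\sim_s l$, contradicting $k\not\sim_s l$. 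If the sets are disjoint, split on whether $i\sim_u k$ or $i\sim_s k$; in each subcase one propagates the relation by transitivity to reach either $i\sim_u j$ or $k\sim_s l$.

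The only real obstacle is bookkeeping in Step~2 (making sure the case analysis is exhaustive), and checking that the determinant/trace identity in Step~1 is not vacuous — here the hypothesis forces $A_iA_j$ to have real eigenvalues $\lambda_u(A_i)\lambda_u(A_j)$ and $\lambda_s(A_i)\lambda_s(A_j)$, which are automatically distinct in absolute value since $A_i,A_j$ are proximal.
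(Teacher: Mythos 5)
Your proof is correct, and Step~1 is essentially the paper's argument, differing only in the choice of basis: you diagonalize $A_i$, whereas the paper first observes that $s(A_i)\ne u(A_j)$ (for $i\ne j$) and then works in the basis $e_1=u(A_j)$, $e_2=s(A_i)$, making $A_i$ lower and $A_j$ upper triangular; in both versions the trace identity forces the product of the two off-diagonal entries to vanish. One small slip: with $e_1=u(A_i)$, $e_2=s(A_i)$ and $A_j=\bigl(\begin{smallmatrix}a&b\\ c&d\end{smallmatrix}\bigr)$, it is $c=0$ (not $b=0$) that makes $e_1$ an eigenvector of $A_j$ with eigenvalue $\lambda_u(A_j)$, yielding $u(A_i)=u(A_j)$, while $b=0$ gives $s(A_i)=s(A_j)$; the dichotomy drawn from $bc=0$ is of course unaffected.

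Step~2 is where you genuinely diverge from the paper. The paper globalizes by induction on $N$: apply the inductive hypothesis to $A_1,\ldots,A_{N-1}$, assume without loss of generality that all their $u$-directions coincide, then run the pairwise dichotomy of each $A_i$ against $A_N$; either some $u(A_i)=u(A_N)$ (so all $u$'s agree) or $s(A_i)=s(A_N)$ for every $i<N$ (so all $s$'s agree). You instead extract the purely combinatorial content: two equivalence relations on $\{1,\ldots,N\}$ whose union is all pairs must have one of them universal. This is cleaner than the case analysis you sketch suggests and can be proved directly without splitting on overlaps: if $\sim_u$ has at least two classes $U_1,U_2$, then every $x\in U_1$ is $\sim_s$-related to every $y\in U_2$, and transitivity of $\sim_s$ through a fixed $y$ merges all of $U_1$ into one $\sim_s$-class (and symmetrically $U_2$), hence $U_1\cup U_2$ lies in a single $\sim_s$-class; since $\sim_s$-classes are disjoint, repeating with any further $\sim_u$-class shows $\sim_s$ is universal. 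Your formulation makes the underlying combinatorial lemma visible and reusable, at the cost of being slightly longer to set up than the paper's induction; both are valid.
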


\begin{proof}
  It is easy to see that \eqref{it:eig2} implies \eqref{it:eig1}. Let us show that \eqref{it:eig1} implies \eqref{it:eig2}. By the assumption and the multiplicativity of the determinant, we have $\lambda_s(A_iA_j)=\lambda_s(A_i)\lambda_s(A_j)$ for all $i,j$. First note that $s(A_i)\neq u(A_j)$, for any $i\neq j$. Indeed, $s(A_i)= u(A_j)$ would imply that the matrix $A_iA_j$ has eigenvalue $\lambda_s(A_i)\lambda_u(A_j)$. Thus, either $\lambda_s(A_i)\lambda_u(A_j)=\lambda_u(A_i)\lambda_u(A_j)$ or $\lambda_s(A_i)\lambda_u(A_j)=\lambda_s(A_i)\lambda_s(A_j)$, which implies that either $\lambda_s(A_i)=\lambda_u(A_i)$ or $\lambda_u(A_j)=\lambda_s(A_j)$, which contradicts to the proximality.

  We prove the statement by induction. Since $s(A_1)\neq u(A_2)$, after a suitable change of basis, the matrices $A_1$ and $A_2$ have the form
  $$
  A_1=\begin{pmatrix}
        \lambda_u(A_1) & 0 \\
        a & \lambda_s(A_1)
      \end{pmatrix}\quad\text{and}\quad A_1=\begin{pmatrix}
        \lambda_u(A_1) & b \\
       0 & \lambda_s(A_1)
      \end{pmatrix}.
  $$
  Hence, $\tr(A_1A_2)=\lambda_u(A_1A_2)+\lambda_s(A_1A_2)=\lambda_u(A_1)\lambda_u(A_2)+\lambda_s(A_1)\lambda_s(A_2)+ab$. So $ab=0$, which implies that if $b=0$ then $s(A_1)=s(A_2)$ or if $a=0$ then $u(A_1)=u(A_2)$.

  Let us then assume that the first $N-1$ matrices have the property that either $u(A_i)=u(A_j)$ for all $i,j\in\{1,\ldots,N-1\}$ or $s(A_i)=s(A_j)$ for all $i,j\in\{1,\ldots,N-1\}$. We may assume without loss of generality that $u(A_i)=u(A_j)$ for all $i,j\in\{1,\ldots,N-1\}$. For a fixed $i\in\{1,\ldots,N-1\}$ the equation $\lambda_u(A_i)\lambda_u(A_N)=\lambda_u(A_iA_N)$ holds only if $u(A_i)=u(A_N)$ or $s(A_i)=s(A_N)$. If $u(A_i)=u(A_N)$ for some $i\in\{1,\dots,N-1\}$, then the proof is complete; otherwise $s(A_i)=s(A_N)$ must hold for all $i\in\{1,\dots,N-1\}$, which again implies the claimed property.
\end{proof}

\begin{lemma}\label{lem:eigenspace2}
  Let $\A=(A_1,\ldots,A_N)\in GL_2(\R)^N$ be such that all the elements of $\A$ are proximal. The following two statements are equivalent:
  \begin{enumerate}
    \item\label{it:eig1b} $|\lambda_u(AB)|=|\lambda_u(A)\lambda_u(B)|$ for all $A,B\in\mathcal{S}(\A)$,
    \item\label{it:eig2b} $u(A_i)=u(A_j)$ for all $i,j$ or $s(A_i)=s(A_j)$ for all $i,j$.
  \end{enumerate}
\end{lemma}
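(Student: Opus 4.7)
The plan is to reduce the equivalence to the signed version already established in Lemma \ref{lem:eigenspace}. The implication \eqref{it:eig2b} $\Rightarrow$ \eqref{it:eig1b} is the easy direction: if all $u(A_i)$ coincide with a common subspace $U$ (the case of a common $s$-direction is symmetric), then $U$ is invariant under every element of $\mathcal{S}(\A)$. Writing the matrices in an adapted basis makes each $A = A_{i_1}\cdots A_{i_n} \in \mathcal{S}(\A)$ lower triangular with diagonal entries $\prod_{k} \lambda_u(A_{i_k})$ and $\prod_{k} \lambda_s(A_{i_k})$; proximality of each generator forces the first product to dominate strictly in absolute value, so $\lambda_u(A) = \prod_{k} \lambda_u(A_{i_k})$, and the identity $\lambda_u(AB) = \lambda_u(A)\lambda_u(B)$ follows even without the absolute values.

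For the converse, my aim is to upgrade \eqref{it:eig1b} into the signed equality $\lambda_u(A_i A_j) = \lambda_u(A_i)\lambda_u(A_j)$ for every pair of generators; Lemma \ref{lem:eigenspace} will then deliver \eqref{it:eig2b}. Applying \eqref{it:eig1b} with $A = A_i$ and $B = A_j$ and arguing as in the first paragraph of the proof of Lemma \ref{lem:eigenspace} excludes the degenerate case $s(A_i) = u(A_j)$, so $A_i$ and $A_j$ may be placed simultaneously in the lower/upper triangular forms used there, with off-diagonal entries $a$ and $b$. Set $P = \lambda_u(A_i)\lambda_u(A_j)$ and $Q = \lambda_s(A_i)\lambda_s(A_j)$. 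The determinant identity $\lambda_u(A_iA_j)\lambda_s(A_iA_j) = PQ$ together with $|\lambda_u(A_iA_j)| = |P|$ leaves exactly two options: either $(\lambda_u(A_iA_j), \lambda_s(A_iA_j)) = (P, Q)$, in which case the trace identity forces $ab = 0$ as desired, or $(\lambda_u(A_iA_j), \lambda_s(A_iA_j)) = (-P, -Q)$, in which case the trace identity forces $ab = -2(P + Q)$.

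The main obstacle is to rule out the second branch, which I plan to do by applying \eqref{it:eig1b} a second time with $A = A_i^2$ and $B = A_j$. The matrix $A_i^2$ belongs to $\mathcal{S}(\A)$ and remains proximal because $|\lambda_u(A_i)|^2 > |\lambda_s(A_i)|^2$; in the same basis it is lower triangular with diagonal $(\lambda_u(A_i)^2, \lambda_s(A_i)^2)$ and sub-diagonal entry $a' = a(\lambda_u(A_i) + \lambda_s(A_i))$. Proximality of $A_i$ guarantees $\tr(A_i) = \lambda_u(A_i) + \lambda_s(A_i) \neq 0$, hence $a'$ is nonzero whenever $a$ is. Repeating the dichotomy for the pair $(A_i^2, A_j)$ in the bad case yields $a'b = -2(\lambda_u(A_i)^2\lambda_u(A_j) + \lambda_s(A_i)^2\lambda_s(A_j))$, and eliminating $a'/a = \lambda_u(A_i)+\lambda_s(A_i)$ against the earlier equation $ab = -2(P + Q)$ collapses, after expansion, to $\lambda_u(A_i)\lambda_s(A_i)(\lambda_u(A_j) + \lambda_s(A_j)) = 0$, which contradicts proximality of $A_j$. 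Thus $ab = 0$, so $\lambda_u(A_iA_j) = \lambda_u(A_i)\lambda_u(A_j)$ for all $i,j$, and I conclude by Lemma \ref{lem:eigenspace}.
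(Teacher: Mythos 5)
Your proof is correct. Both you and the paper reduce the unsigned hypothesis to the signed Lemma \ref{lem:eigenspace} by applying the hypothesis to one extra ``squared'' product, but the technical core differs. The paper assumes both $\lambda_u(A_iA_j)=-\lambda_u(A_i)\lambda_u(A_j)$ and $\lambda_u(A_iA_j^2)=-\lambda_u(A_i)\lambda_u(A_j)^2$, feeds the pair $(A_iA_j,A_j)$ into Lemma \ref{lem:eigenspace} to force coinciding eigenspaces, and then derives from $u(A_iA_j)=u(A_j)$ that $-\lambda_u(A_i)$ is an eigenvalue of $A_i$, a contradiction; this yields only the disjunction ``signed equality holds for $A_iA_j$ or for $A_iA_j^2$'' for each pair, after which Lemma \ref{lem:eigenspace} is invoked pairwise and the tuple is stitched together by a separate induction. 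Your route instead works with $A_i^2A_j$ and pure trace/determinant algebra in the simultaneously triangularised basis: branch two forces $ab=-2(P+Q)$ (with $P+Q\neq 0$, since $|Q|<|P|$ by proximality, so that $ab\neq 0$ and hence $a'b\neq 0$), the same dichotomy for $(A_i^2,A_j)$ forces $a'b=-2(P'+Q')$, and eliminating $a'/a=\lambda_u(A_i)+\lambda_s(A_i)$ collapses to $\lambda_u(A_i)\lambda_s(A_i)\bigl(\lambda_u(A_j)+\lambda_s(A_j)\bigr)=0$, contradicting proximality of $A_j$. This directly yields the stronger conclusion $\lambda_u(A_iA_j)=\lambda_u(A_i)\lambda_u(A_j)$ for every pair, so Lemma \ref{lem:eigenspace} is applied once to the whole tuple and no auxiliary induction is needed; the argument is more computational but arguably cleaner. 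One small point worth making explicit in a write-up is the observation $P+Q\neq 0$ (equivalently $|Q|<|P|$, automatic from proximality of both generators), which guarantees $ab\neq 0$ in the bad branch and thus that the division by $ab$ is legitimate.
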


\begin{proof}
It is again easy to see that \eqref{it:eig2b} implies \eqref{it:eig1b}. Therefore, we assume that \eqref{it:eig1b} holds. Let us first show that $\lambda_u(A_iA_j)=\lambda_u(A_i)\lambda_u(A_j)$ or $\lambda_u(A_iA_j^2)=\lambda_u(A_i)\lambda_u(A_j)^2$ for every $i\neq j$. Suppose for a contradiction that there exist $i \ne j$ such that
\[
\lambda_u(A_iA_j)=-\lambda_u(A_i)\lambda_u(A_j)\quad\text{and}\quad\lambda_u(A_iA_j^2)=-\lambda_u(A_i)\lambda_u(A_j)^2.
\]
Hence, $\lambda_u(A_iA_j)\lambda_u(A_j)=-\lambda_u(A_i)\lambda_u(A_j)^2=\lambda_u(A_iA_j^2)$ and, by Lemma~\ref{lem:eigenspace} applied to the matrix pair $(A_iA_j,A_j)$, we have $u(A_iA_j)=u(A_j)$ or $s(A_iA_j)=s(A_j)$. Assuming $u(A_iA_j)=u(A_j)$, we have $-\lambda_u(A_i)\lambda_u(A_j)^2v(A_j)=A_jA_j^2v(A_j)=\lambda_u(A_j)^2A_iv(A_j)$, where $v(A_j)\in u(A_j)$ is a unit vector. But this is a contradiction since this would imply that $\lambda_u(A_i)=0$ or $\lambda_s(A_i)=-\lambda_u(A_i)$. The case $s(A_iA_j)=s(A_j)$ is similar.

If $\lambda_u(A_iA_j)=\lambda_u(A_i)\lambda_u(A_j)$, then \eqref{it:eig2b} follows from Lemma~\ref{lem:eigenspace}. Similarly, if $\lambda_u(A_iA_j^2)=\lambda_u(A_i)\lambda_u(A_j)^2$, then again by Lemma~\ref{lem:eigenspace}, $u(A_j)=u(A_j^2)=u(A_i)$ or $s(A_j)=s(A_j^2)=s(A_i)$. The proof can be finished by induction similarly to the proof of Lemma~\ref{lem:eigenspace}.
\end{proof}

The following lemma is a simple application of \cite[Theorem~1.7(ii)--(iii)]{FengKaenmaki2011}.

\begin{lemma}\label{lem:triang}
  Let $\A=(A_1,\ldots,A_N)\in GL_2(\R)^N$ be such that
  $$
  A_i=
  \begin{pmatrix}
    a_i & b_i \\
    0 & c_i
  \end{pmatrix}
  $$
  for all $i \in \{1,\ldots,N\}$, where $a_i,b_i,c_i \in \R$, and let $\mu_a$ and $\mu_c$ be the Bernoulli measures obtained from the probability vectors $(\sum_{i=1}^N|a_i|^s)^{-1}(|a_1|^s,\ldots,|a_N|^s)$ and $(\sum_{i=1}^N|c_i|^s)^{-1}(|c_1|^s,\ldots,|c_N|^s)$, respectively. If $\mu$ is an ergodic equilibrium state for $\Phi^s$, then
  \begin{equation*}
    \mu \in
    \begin{cases}
      \{\mu_a\}, &\text{if } \sum_{i=1}^N|a_i|^s>\sum_{i=1}^N|c_i|^s, \\
      \{\mu_c\}, &\text{if } \sum_{i=1}^N|a_i|^s<\sum_{i=1}^N|c_i|^s, \\
      \{\mu_a,\mu_c\}, &\text{if } \sum_{i=1}^N|a_i|^s=\sum_{i=1}^N|c_i|^s.
    \end{cases}
  \end{equation*}
\end{lemma}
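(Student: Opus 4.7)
The plan is to reduce this lemma directly to \cite[Theorem~1.7(ii)--(iii)]{FengKaenmaki2011}. The first observation is that, since every $A_i$ is upper triangular in the given basis, so is every product $A_\iii$; in particular the line $V=\langle e_1\rangle$ is preserved by every element of $\mathcal{S}(\A)$, so $\A$ is reducible with this common invariant subspace.

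Next I would identify the two one-dimensional sub-cocycles that the reducible classification of \cite{FengKaenmaki2011} attaches to $V$: the restriction $A_i|_V$ is multiplication by $a_i$, while the induced action on $\R^2/V$ is multiplication by $c_i$. Each gives rise to an \emph{additive} potential on $\Sigma$, whose pressure is computed via the standard one-dimensional Gibbs formula to be $\log\sum_{i=1}^N|a_i|^s$ and $\log\sum_{i=1}^N|c_i|^s$ respectively; and the unique $\sigma$-invariant equilibrium state of each is precisely the Bernoulli measure $\mu_a$ (resp.\ $\mu_c$) described in the statement.

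The final step is to apply \cite[Theorem~1.7(ii)--(iii)]{FengKaenmaki2011}, which in the reducible case asserts that the ergodic equilibrium states of $\Phi^s$ are exactly the equilibrium states of whichever of the two sub-cocycles attains the larger pressure (or of both, when the pressures agree). This directly gives the trichotomy: $\{\mu_a\}$ when $\sum_i|a_i|^s>\sum_i|c_i|^s$, $\{\mu_c\}$ when the inequality reverses, and $\{\mu_a,\mu_c\}$ when equality holds.

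Since the statement is explicitly labelled as a simple application of the cited theorem, I do not anticipate a substantive obstacle. The only point requiring care is the translation between the general reducibility framework of \cite{FengKaenmaki2011}---formulated in terms of invariant subspaces and the two associated sub-cocycles---and the explicit diagonal-entry description in terms of $a_i$ and $c_i$ used here; once one verifies that the two formulations agree, the trichotomy is simply reading off which of the two sub-cocycle pressures wins.
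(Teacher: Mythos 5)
Your proposal is correct and follows exactly the same route as the paper: the paper gives no proof beyond the single remark that the lemma ``is a simple application of \cite[Theorem~1.7(ii)--(iii)]{FengKaenmaki2011},'' which is precisely the reduction you carry out, identifying the invariant line $V=\langle e_1\rangle$, the two associated one-dimensional sub-cocycles with multipliers $a_i$ and $c_i$, their Bernoulli equilibrium states $\mu_a,\mu_c$, and reading off the trichotomy from the comparison of the two pressures.
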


The following lemma is \cite[Proposition 1.2]{FengKaenmaki2011}.

\begin{lemma}\label{lem:irred}
If $\A\in GL_2(\R)^N$ is irreducible, then there is unique equilibrium state which is a Gibbs-type measure for $\Phi^s$.
\end{lemma}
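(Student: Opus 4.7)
The plan is to follow the standard route for sub-additive thermodynamic formalism in the matrix setting, which rests on establishing a quasi-multiplicativity property of the norm potential. First I would show that irreducibility of $\A$ implies that $\Phi^s$ is \emph{quasi-multiplicative} in the following sense: there exist a constant $c>0$ and a finite family of words $\jjj_1,\ldots,\jjj_k\in\Sigma_*$ such that for every pair $\iii,\jjj\in\Sigma_*$ there is some $\ell\in\{1,\ldots,k\}$ with
\[
\|A_{\iii\jjj_\ell\jjj}\|^{s}\geq c\|A_\iii\|^{s}\|A_\jjj\|^{s}.
\]
The proof of this property is a compactness argument: write $A_\iii=\|A_\iii\|U_\iii$ and $A_\jjj=\|A_\jjj\|V_\jjj$ with $\|U_\iii\|=\|V_\jjj\|=1$, pass to limit points $U,V$ as $\iii,\jjj$ range over all finite words, and note that the failure of the inequality for every fixed finite family $\{\jjj_\ell\}$ would force the top singular direction of $V$ to land in $\ker U$ no matter which fixed matrix we insert between them. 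Using compactness of $\mathbb{RP}^1$ and continuity, this lack of inserting matrices would produce a proper subspace invariant under the entire semigroup, contradicting irreducibility.

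Next, having quasi-multiplicativity in hand, I would construct a $\sigma$-invariant Gibbs-type measure as follows. Define $Z_n=\sum_{\iii\in\Sigma_n}\|A_\iii\|^{s}$, so that by sub-multiplicativity $Z_{n+m}\leq Z_n Z_m$ while quasi-multiplicativity yields $Z_{n+m+|\jjj_\ell|}\geq c k^{-1}Z_n Z_m$ for some fixed $\ell$. Both inequalities together imply that the sequence $Z_n/e^{nP(\Phi^s)}$ is bounded between two positive constants, where $P(\Phi^s)$ is the pressure from \eqref{eq:pressure}. Consider the pre-measures $\mu_n([\iii])=\|A_\iii\|^s/Z_n$ on cylinders of length $n$, take Cesàro averages of their push-forwards under the shift, and extract a weak-$*$ limit $\mu$. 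The uniform two-sided comparison between $Z_{n+m}$ and $Z_nZ_m$, together with the cocycle identity, then produces the two-sided Gibbs-type bound \eqref{eq:Gibbstype} for $\mu$.

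The remaining task is to promote $\mu$ to the \emph{unique} equilibrium state. A Gibbs-type measure is automatically ergodic: given two cylinders $[\iii]$ and $[\jjj]$, insert one of the quasi-multiplicativity words $\jjj_\ell$ between them to obtain, via \eqref{eq:Gibbstype}, a uniform lower bound of the form $\mu([\iii\jjj_\ell\jjj])\geq c'\mu([\iii])\mu([\jjj])$; this is a weak mixing condition, and a standard argument (see \cite[\S 3.2]{KaenmakiReeve2014}) upgrades it to ergodicity. Ergodicity combined with the Gibbs-type bound implies that any other equilibrium state $\nu$ must charge every cylinder comparably to $\mu$, and comparability with an ergodic measure of the same entropy and Lyapunov average forces $\nu=\mu$ via the variational principle.

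The main obstacle is the first step: extracting the finite family $\{\jjj_\ell\}$ from irreducibility is not purely formal, because a naive compactness argument produces only a \emph{countable} family of connecting words, and one must carefully exploit the finiteness of $\A$ together with the compactness of $\mathbb{RP}^1$ to reduce to finitely many. Once quasi-multiplicativity is established, the construction of the Gibbs-type measure and the uniqueness of the equilibrium state are routine consequences of sub-additive thermodynamic formalism.
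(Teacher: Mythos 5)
The paper does not prove this lemma; it is stated as a direct citation of \cite[Proposition 1.2]{FengKaenmaki2011}, which in turn relies on the quasi-multiplicativity of the norm potential established by Feng for irreducible tuples. Your sketch reproduces exactly that standard argument: irreducibility implies quasi-multiplicativity with a finite family of connecting words (in $GL_2$ this is clean because a rank-one limit $U$ has a one-dimensional kernel, so the failure of the bound forces the $\mathcal{S}(\A)$-orbit of a line to stay inside $\ker U$, giving a proper invariant subspace), quasi-multiplicativity gives uniform two-sided control of $Z_n e^{-nP}$, a weak-$*$ limit of normalized pre-measures produces the Gibbs-type bound, and the Gibbs-type bound forces ergodicity and hence uniqueness of the equilibrium state. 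This is the same route as the cited reference, so your proposal is correct and matches the intended proof; the only point worth tightening is the pressure comparison, where one should sum the quasi-multiplicativity inequality over all pairs $(\iii,\jjj)\in\Sigma_n\times\Sigma_m$ and over the finite family $\{\jjj_\ell\}$ (whose lengths are uniformly bounded), rather than asserting a single $\ell$ works for all pairs simultaneously.
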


We are now ready to prove the propositions.

\begin{proof}[Proof of Proposition~\ref{prop:gibbstype}]
Let us first show that \eqref{it:gibbstype2} implies \eqref{it:gibbstype1}. Lemma~\ref{lem:irred} shows that if $\A$ is irreducible then the equilibrium state is a Gibbs-type measure for $\Phi^s$. Also, if $\A$ is strongly conformal, the conclusion is straightforward. We may thus assume that $\A$ is reducible with a common invariant subspace $V$ and that there exists $\varepsilon>0$ such that either the closed $\varepsilon$-neighbourhood of $V$ or the closure of its complement is an invariant unstable multicone. Note that $\SS(\A)$ cannot contain any parabolic elements, since in this case the neighbourhood (or its complement) cannot be invariant.

We may, by Proposition~\ref{prop:connect}, assume that for some $M \in \N$ the tuple $\A_{h}=(A_1,\ldots,A_M)$ has a strongly invariant multicone $\CC$ and $\A_{e}=(A_{M+1},\ldots,A_N)$ is such that $A_i\CC=\CC$ for all $i \in \{M+1,\ldots,N\}$. Thus, either $V\in\CC^o$ or $V\notin\CC$. If $V\in\CC^o$, then $u(A_i)=V$ for all $i\in\{1,\ldots,M\}$ and if $V\notin\CC$, then $s(A_i)=V$ for all $i\in\{1,\ldots,M\}$. By the invariance of $V$ and since $\SS(\A)$ does not contain parabolic element, every $A\in\SS(\A)$ is diagonalisable. So in the first case, for any $A_{i_1},\ldots,A_{i_n}\in\A$,
$$
|\lambda_u(A_{i_1}\cdots A_{i_n})|=\|A_{i_1}\cdots A_{i_n}|V\|=\prod_{\ell=1}^n\|A_{i_\ell}|V\|=\prod_{\ell=1}^n|\lambda_u(A_{i_\ell})|.
$$
In the second case similarly, $|\lambda_s(A_{i_1}\cdots A_{i_n})|=\prod_{\ell=1}^n|\lambda_s(A_{i_\ell})|$, but by the multiplicity of the determinant $|\lambda_u(A_{i_1}\cdots A_{i_n})|=\prod_{\ell=1}^n|\lambda_u(A_{i_\ell})|$. Moreover, by Lemma~\ref{lem:Morris}, there exists a constant $C>0$ such that for every $A\in\SS(\A)\setminus\SS(\A_e)$
$$
  |\lambda_u(A)|\leq\|A\|\leq C|\lambda_u(A)|,
$$
and $|\lambda_u(A)|=\|A\|$ for $A\in\SS(\A_e)$ trivially. Hence, the Bernoulli measure $\lambda$ obtained from the probability vector
$$
  \biggl(\frac{|\lambda_u(A_1)|^s}{\sum_{i=1}^N |\lambda_u(A_i)|^s},\ldots,\frac{|\lambda_u(A_N)|^s}{\sum_{i=1}^N |\lambda_u(A_i)|^s}\biggr)
$$
is a $\sigma$-invariant Gibbs-type measure for $\Phi^s$. Therefore, $\mu=\lambda$.

Let us then show that \eqref{it:gibbstype1} implies \eqref{it:gibbstype2}. We may assume without loss of generality that $\A$ is reducible with common subspace $V$. Moreover, let us assume that neither any $\varepsilon$-neighbourhood of $V$ nor the closures of the complements are invariant unstable multicone. Our goal is to show that the only remaining possibility, $\A$ is strongly conformal, holds.

By reducibility, after a change of basis, every $A_{\iii}\in\SS(\A)$ has the form
$$
  A_\iii=
  \begin{pmatrix}
    a_\iii & b_\iii \\
    0 & c_\iii
  \end{pmatrix},
$$
where $a_{\iii}=\prod_{k=1}^{|\iii|}a_{i_k}$ and $c_{\iii}=\prod_{k=1}^{|\iii|}c_{i_k}$ with some $a_i,b_i,c_i \in \R$ for $i \in \{1,\ldots,N\}$. Then, by Lemma~\ref{lem:triang}, $\mu=\mu_a$ or $\mu=\mu_c$, where $\mu_a$ and $\mu_c$ are defined in the formulation of Lemma~\ref{lem:triang}. If one of the matrices, say $A_\iii\in\SS(\A)$, is parabolic, then $a_\iii=c_\iii$ and $b_\iii\neq0$. It follows that there exists $c>0$ such that $c^{-1}na_\iii^{n-1}b_\iii\leq\|A_\iii^n\|\leq cna_\iii^{n-1}b_\iii$ for all $n \in \N$. By \cite[Theorem~1.7(ii)]{FengKaenmaki2011}, we may assume that $P(\Phi^s) = \log\sum_{i=1}^N |a_i|^s$ and that $\mu = \mu_a$. The definition of $\mu_a$ thus implies that
$$
  C^{-1}\frac{|a_\iii|^{ns}}{n^s|a_\iii|^{s(n-1)}|b_\iii|^s}\leq \frac{\mu([\iii^n])}{\|A_{\iii}^n\|^s\exp(-nP(\Phi^s))}\leq C\frac{|a_\iii|^{ns}}{n^s|a_\iii|^{s(n-1)}|b_\iii|^s}
$$
for all $n \in \N$.
This is a contradiction since $\mu$ was assumed to be a Gibbs-type measure for $\Phi^s$. Thus, $\SS(\A)$ does not contain any parabolic element.

The common subspace $V$ and the fact that $\SS(\A)$ does not contain parabolic element implies that all the matrices in $\A$ are diagonalisable. Since neither any $\varepsilon$-neighbourhood of $V$ nor the closures of the complements are invariant unstable multicones, then either $|a_k|=|c_k|$ and $b_k=0$ for every $k \in \{1,\ldots,N\}$ (which implies that $\A$ is strongly conformal) or there exist $i\neq j$ such that $|a_i|<|c_i|$ and $|a_j|>|c_j|$. If $\mu=\mu_a$, then
$$
  C^{-1}<\frac{\mu([i^n])}{\|A_{i}^n\|^s\exp(-nP(\Phi^s))}\leq C'\frac{|a_i|^{sn}}{|c_i|^{sn}}
$$
for all $n \in \N$,
and similarly, if $\mu=\mu_c$, then
$$
  C^{-1}<\frac{\mu([j^n])}{\|A_{j}^n\|^s\exp(-nP(\Phi^s))}\leq C'\frac{|c_j|^{sn}}{|a_j|^{sn}}
$$
for all $n \in \N$.
Since both inequalities lead to a contradiction, it follows that $\A$ must be strongly conformal.
\end{proof}

\begin{proof}[Proof of Proposition~\ref{prop:bernoulli}]
  Let us first show that \eqref{it:bernoulli2} implies \eqref{it:bernoulli1}. If $\A$ is reducible, then the statement follows directly from Lemma~\ref{lem:triang}. If $\A$ is strongly conformal, then the statement is straightforward.

  Let us then show that \eqref{it:bernoulli1} implies \eqref{it:bernoulli2}. Let us contrarily assume that $\mu$ is a Bernoulli measure, $\A$ is irreducible, and not strongly conformal. By Lemma~\ref{lem:irred}, $\mu$ is a Gibbs-type measure for $\Phi^s$, that is, there exists a constant $C>0$ such that
  \begin{equation}\label{eq:conttobern}
    C^{-1}\leq\frac{\mu([\iii])}{\|A_{\iii}\|^s\exp(-nP(\Phi^s))}\leq C
  \end{equation}
  for all $\iii\in\Sigma_n$ and $n \in \N$. Since $\mu$ is a Bernoulli measure and $\A$ is not strongly conformal, Theorem~\ref{thm:justdomin} implies that $\A$ has an invariant unstable multicone $\CC$ and $\SS(\A)$ does not contain any parabolic element. We may, by Proposition~\ref{prop:connect}, assume that for some $M \in \N$ the tuple $\A_{h}=(A_1,\ldots,A_M)$ has a strongly invariant multicone $\CC$ and $\A_{e}=(A_{M+1},\ldots,A_N)$ is strongly conformal with $A_i\CC=\CC$ for all $i \in \{M+1,\ldots,N\}$.

  By \eqref{eq:conttobern} and the Bernoulli property of $\mu$,
  $$
    C^{-1/n}\leq\frac{\mu([\iii])}{\|A_{\iii}^n\|^{s/n}\exp(-|\iii|P(\Phi^s))}\leq C^{1/n}
  $$
  for all $\iii\in\Sigma_*$ and $n\in \N$. Thus, by letting $n \to \infty$, we see that
  $$
    |\lambda_u(A_{\iii})|=\mu([\iii])^{1/s}\exp(|\iii|P/s).
  $$
  for all $\iii\in\Sigma_*\setminus\bigcup_{k\in\N}\{M+1,\ldots,N\}^k$. Since $\mu$ is a Bernoulli measure, we see that $|\lambda_u(A_{\iii\jjj})|=|\lambda_u(A_{\iii})\lambda_u(A_{\jjj})|$ for any two $\iii,\jjj\in\Sigma_*\bigcup_{k\in\N}\setminus\{M+1,\ldots,N\}^k$.
  Thus Lemma~\ref{lem:eigenspace2} implies that there exists a subspace $V$ such that $u(A_{\iii})=V$ for all $\iii\in\Sigma_*\setminus\bigcup_{k \in \N}\{M+1,\ldots,N\}^k$ or $s(A_{\iii})=V$ for all $\iii\in\Sigma_*\setminus\bigcup_{k \in \N}\{M+1,\ldots,N\}^k$. Without loss of generality, we may assume that we are in the first case.

  Since $|\lambda_u(A_\iii^2A_j)|=|\lambda_u(A_\iii)\lambda_u(A_\iii A_j)|$ and $|\lambda_u(A_\iii^3A_j)|=|\lambda_u(A_\iii)^2\lambda_u(A_\iii A_j)|$, for every $j\in\{M+1,\ldots,N\}$, we have by Lemma~\ref{lem:eigenspace} that $u(A_{\iii}^kA_j)=u(A_{\iii})$, where $k=1$ or $k=2$. Therefore $A_{\iii}^kA_ju(A_{\iii}^kA_j)=A_{\iii}^ku(A_{\iii})$, which implies that $A_jV=V$. Thus, $V$ is an invariant subspace for $\A$. This contradicts the irreducibility assumption.
\end{proof}

Let us next prove the theorems. For the existence of the function in the statement \eqref{thm:ftvii} of Theorem~\ref{thm:fortuples} we need the following lemma.

\begin{lemma}\label{lem:convmeas}
  Let $\A \subset GL_2(\R)$ be a finite set such that $\A=\A_h\cup\A_e$, where $\A_e$ is strongly conformal and $\A_h\neq\emptyset$ has a strongly invariant multicone $\CC$ such that $A\CC=\CC$ for all $A\in\A_e$. Let $m$ be the Haar measure generated by $\A_e$ normalised on $\CC$. Then for every $\iii\in\Sigma$ there exists a probability measure $\nu_{\iii}$ on $\CC$ such that
  $$
    \nu_{\iii}=\lim_{n\to\infty}(A_{\iii|_n})_*m.
  $$
  In particular, $(A_j)_*\nu_{\iii} = \nu_{j\iii}$.
\end{lemma}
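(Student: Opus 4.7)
The plan is to split into two cases according to whether the sequence $\iii=(i_1,i_2,\ldots)$ has only finitely many or infinitely many indices $n$ with $A_{i_n}\in\A_h$. The Haar measure $m$ is invariant under the projective action of $\A_e$, so for every $A\in\A_e$ we have $A_*m=m$, and the recursion
\[
  (A_{\iii|_{n+1}})_*m=(A_{\iii|_n})_*(A_{i_{n+1}})_*m
\]
shows that $(A_{\iii|_{n+1}})_*m=(A_{\iii|_n})_*m$ whenever $A_{i_{n+1}}\in\A_e$. Hence in the first case the sequence is eventually constant and we take $\nu_\iii$ to be this common value.

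In the second case, I claim that the nested decreasing sequence of compact sets $A_{\iii|_n}\CC$ shrinks to a single point $p_\iii\in\CC$, which forces $(A_{\iii|_n})_*m$ to converge weakly to the Dirac mass $\delta_{p_\iii}$. The shrinking rests on the multicone $\CC_0\subset\CC^o$ constructed in the proof of Proposition~\ref{prop:converse}, which satisfies $A\CC\subset\CC_0$ for every $A\in\A_h$ and $A\CC_0=\CC_0$ for every $A\in\A_e$. Equipping each component of $\CC$ with a Hilbert-type projective metric, every matrix in $\A_h$ becomes a strict contraction with contraction ratio uniformly bounded away from $1$ (by compactness of $\A_h$ and because such matrices map $\CC$ into the strictly smaller set $\CC_0$), while every matrix in $\A_e$ acts as an isometry after the common conjugation witnessing strong conformality. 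Since $\A_h$-letters occur infinitely often along $\iii$, the diameter of $A_{\iii|_n}\CC$ shrinks geometrically, so $\bigcap_n A_{\iii|_n}\CC=\{p_\iii\}$ for a unique $p_\iii\in\CC$ and we set $\nu_\iii:=\delta_{p_\iii}$.

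The cocycle identity follows at once from $A_{(j\iii)|_{n+1}}=A_jA_{\iii|_n}$ together with weak continuity of the pushforward under the fixed projective homeomorphism induced by $A_j$, upon passing to the limit $n\to\infty$. The main obstacle is making the uniform geometric shrinking claim rigorous in the second case. The cleanest route may be to sidestep the Hilbert-metric setup entirely and invoke Lemma~\ref{lem:fin} directly: the auxiliary set $\B=\{A_1A_2:A_1\in\A_h,\,A_2\in\mathcal{F}(\A)\}$ has a strongly invariant multicone and is therefore dominated by \cite[Theorem~B]{BochiGourmelon09}, so its products enjoy a uniform exponential gap between singular values. After absorbing the $\mathcal{F}(\A)$-factors into $\B$-blocks, this gap transfers to the products $A_{\iii|_n}$ containing infinitely many $\A_h$-letters and directly forces the projective image of $\CC$ to contract at a uniform geometric rate.
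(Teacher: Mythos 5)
Your proof follows the paper's strategy almost step for step: decompose $\Sigma$ into the set $\hat\Sigma$ of words with infinitely many $\A_h$-letters and its complement $\Upsilon$, handle $\Upsilon$ by $\A_e$-invariance of the Haar measure $m$, and on $\hat\Sigma$ reduce the weak limit to a Dirac mass at the intersection point $\bigcap_n A_{\iii|_n}\CC$ via geometric shrinking, then deduce the shift relation by continuity of pushforward. Your instinct to replace the Hilbert-metric sketch with Lemma~\ref{lem:fin} and domination is the right one (and is exactly what the paper uses, cf.\ the estimate $\diam(A_{\iii|_n}\CC)\le C\tau^{\sharp_1\iii|_n+\cdots+\sharp_M\iii|_n}$ in the proof of Theorem~\ref{thm:fortuples}): since $\CC$ may have several components, the Hilbert metric only controls each component's diameter separately, while the uniform singular-value gap together with $\CC$ staying a positive distance from the stable directions is what forces all components of $A_{\iii|_n}\CC$ to collapse onto a single point.
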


\begin{proof}
  Write $\A_h=\{A_1,\ldots,A_M\}$ and $\A_e=\{A_{M+1},\ldots,A_N\}$. Let us divide $\Sigma$ into two disjoint sets
  \begin{align}
    \hat\Sigma&=\{i_1i_2\cdots\in\Sigma: i_n\in\{1,\ldots,M\}\text{ for infinitely many }n\in\N\},\label{eq:decombsigma1}\\
    \Upsilon&=\{i_1i_2\cdots\in\Sigma: \text{ there is }n_0\in\N\text{ such that }i_n\in\{M+1,\ldots,N\}\text{ for all }n > n_0\}.\label{eq:decombsigma2}
  \end{align}
  Fix $\iii \in \hat\Sigma$. By the definition, $\iii\in\hat\Sigma$ can be written as $\iii = \iii_1j_1\iii_2j_2\cdots$, where $$\iii_k \in \bigcup_{n\in\N}\{M+1,\ldots,N\}^n \cup \{\varnothing\}$$ and $i_k \in \{1,\ldots,M\}$ for all $k \in \N$.
  Thus, $A_{\iii_kj_k}\CC\subset\CC^o$ for every $k\in\N$, and there exists a unique $V=V(\iii)\in\RP$ such that $V=\bigcap_{n=0}^{\infty}A_{\iii_1j_1}\cdots A_{\iii_nj_n}(\CC)$.

  Let $g\colon\RP\to\R$ be a continuous function. Since $\RP$ is compact, for every $\varepsilon>0$ there exists $r>0$ such that for every $V,W\in$ with $d(V,W)<r$, $|g(V)-g(W)|<\varepsilon$. Thus, by choosing $n$ sufficiently large so that $\diam(A_{\iii_1j_1}\cdots A_{\iii_{n}j_{n}}(\CC))<r$, we have
  $$
  \biggl|\int g(V)\dd(A_{\iii|_{n}})_*m(V)-g(V(\iii))\biggr|\leq\varepsilon.
  $$
  Hence, $\lim_{n\to\infty}(A_{\iii|_{n}})_*m$ exists and equals to $\delta_{V(\iii)}$.

  On the other hand, if $\iii\in\Upsilon$, then clearly $\lim_{n\to\infty}(A_{\iii|_{n}})_*m=(A_{\iii|_{k}})_*m$, where $k$ is the smallest $n_0$ satisfying the condition in \eqref{eq:decombsigma2}.
\end{proof}

\begin{proof}[Proof of Theorem~\ref{thm:fortuples}]
  The equivalence of \eqref{thm:ftvvv} and \eqref{thm:fti} follows directly from Corollary~\ref{thm:justdomin2}. By Lemma~\ref{lem:irred}, the equilibrium state $\mu$ is unique and a Gibbs-type measure for $\Phi^s$. Thus, also \eqref{thm:ftv} and \eqref{thm:ftvvv} can be immediately seen to be equivalent.

  Let us show that \eqref{thm:ftvii} implies \eqref{thm:ftv}. Plugging \eqref{thm:ftvii} into \eqref{eq:Gibbstype}, we see that
  $$
    C^{-1}\exp\biggl(-nP(\Phi) + s\sum_{k=0}^{n-1} f(\sigma^k\iii)\biggr) \le \mu([\iii|_n]) \le C\exp\biggl(-nP(\Phi) + s\sum_{k=0}^{n-1} f(\sigma^k\iii)\biggr)
  $$
  holds for every $\iii\in\Sigma$, from which the quasi-Bernoulli property clearly follows.

  It remains to show that \eqref{thm:fti} implies \eqref{thm:ftvii}. By Lemma~\ref{lem:convmeas}, $\nu_{\iii}=\lim_{n\to\infty}(A_{\iii|_{n}})_*m$ exists for every $\iii\in\Sigma$. Define $f\colon\Sigma\to\R$ by setting
  $$
    f(\iii)=\int\log\|A_{\iii|_1}|V\|\dd\nu_{\sigma\iii}(V)
  $$
  for all $\iii \in \Sigma$. Clearly,
  $$
    \int|f(\iii)|\dd\mu(\iii)\leq\iint|\log\|A_{i_0}|V\||\dd\nu_{\sigma\iii}(V)\dd\mu(\iii)\leq\iint C\dd\nu_{\sigma\iii}(V)\dd\mu(\iii)=C,
  $$
  where $C=\log\max\{\max_{i}\{\|A_{i}\|\},\max_i\{\|A_i^{-1}\|\}\}$. Let $\hat\Sigma$ and $\Upsilon$ be as in \eqref{eq:decombsigma1} and \eqref{eq:decombsigma2}, respectively. Since $\mu$ is fully supported, $\mu(\Upsilon)=0$ and $\hat\Sigma$ has full $\mu$ measure. Furthermore, every $\iii \in \hat\Sigma$ satisfies
  $$
    \diam(A_{\iii|_{n}}(\CC))\leq C\tau^{\sharp_{1}\iii|_n+\cdots+\sharp_{M}\iii|_n}\to0
  $$
  as $n\to\infty$.
  Therefore, for $\mu$-almost every $\iii$ and for any sequence $(\jjj_n)_{n\in\N}$ converging to $\iii$ and sufficiently large $n$,
  \begin{align*}
    |f(\iii)-f(\jjj_n)|&=\biggl|\int\log\|A_{\iii|_1}|V\|\dd\nu_{\sigma\iii}(V)-\int\log\|A_{\jjj_n|_1}|V\|\dd\nu_{\sigma\jjj_n}(V)\biggr|\\
    &=\biggl|\log\|A_{\iii|_1}|V(\sigma\iii)\|-\int\log\|A_{\iii|_1}|V\|\dd\nu_{\sigma\jjj_n}(V)\biggr|\\
    &\leq C\dist(\delta_{V(\sigma\iii)},\nu_{\sigma\jjj_n})\leq C'\diam(A_{\sigma\iii\wedge\sigma\jjj_n}(\CC)),
  \end{align*}
  which converges to $0$ as $n\to\infty$. Note that
  $$
    \sum_{k=0}^{n-1}f(\sigma^k\iii)=\int\log\|A_{\iii|_n}|V\|\dd\nu_{\sigma^n\iii}(V)
  $$
  for every $n\in \N$ and $\iii\in\Sigma$.
  By Lemma~\ref{lem:Morris}, there exists $\kappa>0$ such that $\|A_{\iii|_n}\|\geq\|A_{\iii|_n}|V\|\geq\kappa\|A_{\iii|_n}\|$ for all $V\in\CC$. Therefore, \eqref{thm:ftvii} follows.
\end{proof}

The following lemma, which we refer to as the three matrices lemma, is the key observation in the proof of Theorem~\ref{thm:holder}.

\begin{lemma}\label{prop:key}
  If $\A=(A_1,A_2,A_3)\in GL_2(\R)^3$ is such that $A_3=cI$ for some $c \in\R\setminus \{0\}$ and $(A_1,A_2)$ is irreducible and dominated, then for every H\"older continuous potential $f\colon\{1,2,3\}^{\N}\to\R$ and every $C>0$ there exists $\iii\in\{1,2,3\}^{\N}$ and $n\in\N$ such that
  $$
    \Biggl|\sum_{k=0}^{n-1}f(\sigma^k\iii)-\log\|A_{\iii|_n}\|\Biggr|> C.
  $$
\end{lemma}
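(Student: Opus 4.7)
Suppose a Hölder continuous $f\colon\{1,2,3\}^\N\to\R$ with constants $K$ and $\tau\in(0,1)$ exists that satisfies $|\sum_{k=0}^{n-1}f(\sigma^k\iii)-\log\|A_{\iii|_n}\||\le C$ for all $\iii$ and $n$. The plan is to exploit the fact that $A_3=cI$ commutes with everything: inserting $3$'s into a word factors out a scalar from the matrix product without perturbing direction information, so periodic orbits of the form $(w3^m)^\infty$ with $w\in\{1,2\}^*$ will yield exact identities constraining $f$ on sequences eventually equal to $3^\infty$. Irreducibility of $(A_1,A_2)$ will then force a constraint that Hölder continuity cannot satisfy.

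\textbf{Step 1 (Periodic-orbit calibration).} For every $w\in\{1,2,3\}^*$ with $A_w$ proximal, dividing $S_{nk}f(w^\infty)=k\cdot S_nf(w^\infty)$ by $k$ and letting $k\to\infty$ in the bounded-difference hypothesis yields the exact equality $S_{|w|}f(w^\infty)=\log|\lambda_u(A_w)|$. Applied to $w=3$ this gives $f(3^\infty)=\log|c|$, and applied to $w=1,2$ it gives $f(1^\infty)=\log|\lambda_u(A_1)|$ and $f(2^\infty)=\log|\lambda_u(A_2)|$. (Here $A_1,A_2$ and all their products are proximal by the dominated hypothesis on $(A_1,A_2)$.)

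\textbf{Step 2 (Insertion identity).} Fix $w\in\{1,2\}^*$. Since $A_{w3^m}=c^mA_w$, Step 1 gives $S_{|w|+m}f((w3^m)^\infty)=m\log|c|+\log|\lambda_u(A_w)|$. Split this Birkhoff sum according to whether the shifted sequence starts inside $w$ (indices $k=0,\ldots,|w|-1$) or inside the block $3^m$ (indices $|w|+j$, $0\le j<m$). Hölder continuity of $f$ gives $|f(\sigma^k(w3^m)^\infty)-f((\sigma^kw)\cdot 3^\infty)|\le K\tau^{|w|-k+m}$ and $|f(\sigma^{|w|+j}(w3^m)^\infty)-f(3^{m-j}w\cdot 3^\infty)|\le K\tau^{(m-j)+|w|+m}$, both summing to series bounded uniformly in $m$. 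Passing to the limit $m\to\infty$ yields the exact identity
\begin{equation}\label{eq:plan-main}
\log|\lambda_u(A_w)|=\sum_{k=0}^{|w|-1}f\bigl((\sigma^kw)\cdot 3^\infty\bigr)+\sum_{\ell=1}^\infty\bigl[f(3^\ell w\cdot 3^\infty)-\log|c|\bigr]
\end{equation}
for every $w\in\{1,2\}^*$, where the second sum converges absolutely thanks to Hölder continuity.

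\textbf{Step 3 (Asymptotic exploitation of domination).} Apply \eqref{eq:plan-main} to $w=1^n$, $w=2^n$, and $w=1^n2^n$. Domination and proximality give the rank-one asymptotics $\log|\lambda_u(A_1^nA_2^n)|=n\log|\lambda_u(A_1)|+n\log|\lambda_u(A_2)|+\log|\langle u_1^*,u_2\rangle|+o(1)$ as $n\to\infty$, where $u_i=u(A_i)$ and $u_i^*$ is the corresponding left eigenvector; irreducibility of $(A_1,A_2)$ guarantees $\langle u_1^*,u_2\rangle\ne0$. On the right-hand side of \eqref{eq:plan-main} for $w=1^n2^n$, one expands each $f$-value using Hölder decay: the ``diagonal'' contributions recover the two leading linear-in-$n$ terms, while the cross contribution reduces to the single convergent series $\sum_{i=1}^\infty[f(1^i2^\infty)-f(1^i3^\infty)]$ in the limit. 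The same analysis applied to the cyclically-equivalent word $w=2^n1^n$ (for which $|\lambda_u|$ is the same) yields a mirror-image identity with the reversed cross term $\sum_{i=1}^\infty[f(2^i1^\infty)-f(2^i3^\infty)]$.

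\textbf{Step 4 (Contradiction).} Subtracting the two identities produced in Step 3 and using the periodic-orbit data of Step 1 collapses the left-hand sides, leaving an equality of two convergent series whose individual terms are $O(\tau^i)$ and whose difference is dictated by the non-commutativity of $A_1$ and $A_2$. Iterating the construction with the family $w=(1^n2^n)^r$ and letting $r\to\infty$, the leading behaviour on the matrix side multiplies the non-trivial constant $\log|\langle u_1^*,u_2\rangle|$ by $r$, whereas on the $f$-side the analogous cross series grows only as $O(1)$; this forces $\log|\langle u_1^*,u_2\rangle|=0$, contradicting irreducibility.

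\textbf{Main obstacle.} The heart of the argument is Step~4: making sure that the non-trivial constant $\log|\langle u_1^*,u_2\rangle|$ coming from irreducibility genuinely survives the various limits and is not absorbed by the tail terms of the absolutely convergent series appearing in \eqref{eq:plan-main}. This requires a careful bookkeeping of the double limit (in $n$ and in the iteration parameter $r$) and a matching of geometric decay rates, exploiting that the $f$-cross terms decay as $\tau^i$ while the matrix asymptotics produce a linear-in-$r$ accumulation that no Hölder potential can compensate.
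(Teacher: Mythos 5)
Your proposal takes a genuinely different route from the paper's --- a Liv\v{s}ic-type periodic-orbit argument rather than the paper's measure-theoretic construction via Gibbs states and Ruelle operators --- and Steps 1 and 2 are sound, but Step 4, the claimed contradiction, has a genuine gap. Applying the Step~2 identity to $w=(1^n2^n)^r$ does not produce a linear-in-$r$ discrepancy, because the $f$-side also grows linearly in $r$: by H\"older continuity together with your own Step~1,
\begin{equation*}
\sum_{k=0}^{2rn-1}f\bigl(\sigma^k\bigl((1^n2^n)^r3^\infty\bigr)\bigr) \;=\; r\sum_{k=0}^{2n-1}f\bigl(\sigma^k(1^n2^n)^\infty\bigr) + O(1) \;=\; r\log|\lambda_u(A_{1^n2^n})| + O(1),
\end{equation*}
with the $O(1)$ bounded in $r$. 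The cross constant $\log|\langle u_1^*,u_2\rangle|$ sits inside $\log|\lambda_u(A_{1^n2^n})|$ and is multiplied by $r$ on \emph{both} sides of the identity, so nothing accumulates. Likewise, subtracting the $1^n2^n$ and $2^n1^n$ identities merely produces a single scalar relation among values of $f$ at eventually-$3$ sequences, which a H\"older potential has no difficulty satisfying; no contradiction with irreducibility results.

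The idea you are missing is to insert blocks of $3$'s in the \emph{interior} of $w$, not only in front of it. Apply your Step~2 identity to $w3^mv$ for arbitrary $w,v\in\{1,2\}^*$: since $A_{w3^mv}=c^mA_wA_v$, the left-hand side is $m\log|c|+\log|\lambda_u(A_wA_v)|$, while the Birkhoff sum over $w3^mv3^\infty$ decomposes, up to $O(\tau^m)$, into the Birkhoff sum over $w3^\infty$, the partial sum $\sum_{\ell=1}^m f(3^\ell v3^\infty)$, and the Birkhoff sum over $v3^\infty$; the $3$-tail series for $w3^mv$ differs from that for $w$ by $O(\tau^{|w|+m})$. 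Combining these with the Step~2 identities for $w$ and for $v$ alone, all the $m\log|c|$, $B$- and $T$-terms cancel, and letting $m\to\infty$ yields the exact relation $\log|\lambda_u(A_wA_v)|=\log|\lambda_u(A_w)|+\log|\lambda_u(A_v)|$. Thus $|\lambda_u|$ is multiplicative on all of $\mathcal{S}(A_1,A_2)$, and Lemma~\ref{lem:eigenspace2} then forces $u(A_1)=u(A_2)$ or $s(A_1)=s(A_2)$, i.e.\ a common invariant line, contradicting irreducibility. With this repair your approach gives a considerably shorter and more elementary proof of Lemma~\ref{prop:key} than the paper's, which instead builds a candidate measure $\eta$ on $\Sigma$ by decoupling the $3$-insertions from a $\{1,2\}$-Gibbs measure, identifies $\eta$ with the Gibbs measure $\mu_f$ of $f$, and derives the contradiction with Proposition~\ref{prop:bernoulli} by showing that the induced measure on $\{1,2\}^\N$ would have to be Bernoulli.
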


The proof of the lemma takes several pages. Trying not to disrupt the flow of the proofs in this section, we have postponed it into \S\ref{sec:3m}.

\begin{proof}[Proof of Theorem~\ref{thm:holder}]
  By Lemma~\ref{lem:irred}, the equilibrium state $\mu$ is unique and a Gibbs-type measure for $\Phi^s$. Taking the potential $f$ in \eqref{it:holder3}, it is clear that $\mu$ is Gibbs for the potential $sf$. On the other hand, if $\mu$ is Gibbs for the potential $g$ then $\frac{1}{s}g$ clearly satisfies \eqref{it:holder3}.

  Let us show that \eqref{it:holder2} implies \eqref{it:holder3}. If $\A$ has a strongly invariant multicone $\CC$, then, by e.g.\ \cite[Lemma 2.4]{BaranyRams2017}, there exist H\"older-continuous functions $V\colon\Sigma\to\RP$ and $f\colon\Sigma\to\R$ such that
  \begin{equation}\label{eq:hpot}
    V(\iii)=\bigcap_{n=1}^{\infty}A_{\iii|_n}(\CC) \quad \text{and} \quad f(\iii)=\log\|A_{\iii|_1}|V(\sigma\iii)\|
  \end{equation}
  for all $\iii\in\Sigma$.
  Moreover, by Lemma~\ref{lem:Morris}, there is a constant $C>0$ such that
  $$
    \Biggl|\sum_{k=0}^{m-1}f(\sigma^k\iii)-\log\|A_{\iii|_m}\|\Biggr|\leq C.
  $$
  for all $\iii\in\Sigma$ and $m\in\N$.
  On the other hand, if $\A$ is strongly conformal then, by choosing $f(\iii)=\frac{1}{2}\log|\det(A_{\iii|_1})|$, the claimed properties follow.

  It remains to show that \eqref{it:holder3} implies \eqref{it:holder2}. Let us assume contrarily that there exist a constant $C>0$ and a H\"older-continuous function $f$ such that
  \begin{equation}\label{eq:tocontra2}
    \Biggl|\sum_{k=0}^{n-1}f(\sigma^k\iii)-\log\|A_{\iii|_n}\|\Biggr|\leq C,
  \end{equation}
  for all $\iii\in\Sigma$, $\A$ does not have strongly invariant multicone,
  and $\A$ is not strongly conformal. Thus, by Theorem~\ref{thm:fortuples}, $\A$ can be decomposed into $\A_h\neq\emptyset$ and strongly conformal set $\A_e\neq\emptyset$ such that $\A_h$ has strongly invariant multicone $\CC$ and $A\CC=\CC$ for every $A\in\A_e$. As usual, let $\A_h=\{A_1,\ldots,A_M\}$ and $\A_e=\{A_{M+1},\ldots,A_N\}$. The equilibrium state $\mu$ is a quasi-Bernoulli measure. Recall that, by Proposition~\ref{prop:connect}, $\{|\det(A)|^{-1/2}A:A\in\SS(\A_e)\}$ is finite. Hence, there exists $A_\jjj\in\SS(\A_e)$ such that $A_\jjj=cI$.

Since $\A_h$ is non-empty and $\A$ is irreducible, $X_u(\A)$ and $X_s(\A)$ contain at least two points each. Then there exist four proximal matrices $A_{\iii_1},A_{\iii_2},A_{\iii_3},A_{\iii_4}\in\SS(\A)$ such that $u(\A_{\iii_1})\neq u(\A_{\iii_2})$ and $s(A_{\iii_3})\neq s(A_{\iii_4})$. Taking $q>0$ sufficiently large we have that $A_{\iii_1}^q\CC\cap A_{\iii_2}^q\CC=\emptyset$ and $A_{\iii_3}^{-q}(\CC^o)^c\cap A_{\iii_4}^{-q}(\CC^o)^c=\emptyset$. Clearly, $u(A_{\iii_1}^qA_{\iii_3}^{q})\in A_{\iii_1}^q\CC$ and $u(A_{\iii_2}^qA_{\iii_4}^{q})\in A_{\iii_2}^q\CC$ and so $u(A_{\iii_1}^qA_{\iii_3}^{q})\neq u(A_{\iii_2}^qA_{\iii_4}^{q})$. Similarly, $s(A_{\iii_1}^qA_{\iii_3}^{q})\in A_{\iii_3}^{-q}(\CC^o)^c$ and $s(A_{\iii_2}^qA_{\iii_4}^{q})\in A_{\iii_4}^{-q}(\CC^o)^c$ and so $s(A_{\iii_1}^qA_{\iii_3}^{q})\neq s(A_{\iii_2}^qA_{\iii_4}^{q})$. Thus,
$(A_{\iii_1}^qA_{\iii_3}^{q},A_{\iii_2}^qA_{\iii_4}^{q})$ is dominated and irreducible.

There exist $n_1,n_2,n_3\geq1$ such that $\ell:=n_3|\jjj|=n_1q(|\iii_1|+|\iii_3|)=n_2q(|\iii_2|+|\iii_4|)$. Let us define $\Gamma = \{(\iii_1^q\iii_3^q)^{n_1},(\iii_2^q\iii_4^q)^{n_1},\jjj^{n_3}\}^\N$. By \eqref{eq:tocontra2}, the H\"older continuous potential $h=\sum_{j=0}^{\ell-1}f\circ\sigma^j$ satisfies
  $$
    \Biggl|\sum_{k=0}^{m-1}h(\sigma^k\iii)-\log\|A_{\iii|_m}\|\Biggr|\leq C
  $$
  for all $m\in\N$ and $\iii\in\Gamma$, where $\sigma$ denotes the left-shift operator on $\Gamma$. Since this contradicts Lemma~\ref{prop:key}, we have finished the proof.
\end{proof}

\section{The three matrices lemma}\label{sec:3m}

In this section, we prove Lemma~\ref{prop:key}. Throughout the section, we assume that $\A=(A_1,A_2,A_3)\in GL_2(\R)^3$ is such that $A_3=cI$ for some $c \in \R\setminus\{0\}$, and $(A_1,A_2)$ is irreducible and has a strongly invariant multicone $\CC$. Note that there exists a multicone $\CC_0 \subset \CC^o$ such that $A_i\CC \subset \CC_0$ for $i=1,2$. Without loss of generality, by multiplying the matrix triple $\A$ by $c^{-1}$, we may assume that $c=1$. This does not affect on the existence of a H\"older continuous potential.

For simplicity, let us denote $\Sigma=\{1,2,3\}^{\N}$ and $\Gamma=\{1,2\}^{\N}$. Let the Borel $\sigma$-algebras of $\Sigma$ and $\Gamma$ be $\BB_{\Sigma}$ and $\BB_{\Gamma}$, respectively. As in \eqref{eq:decombsigma2}, let $\Upsilon=\bigcup_{n=0}^{\infty}\bigcup_{\iii\in\Sigma_n}\{\iii3^{\infty}\} \subset \Sigma$ be the countable set of infinite words whose tail consists only $3$'s, and define $\hat\Sigma = \Sigma \setminus \Upsilon$.
Notice that each $\iii\in\hat\Sigma$ can be written in the form $\iii=3^{k_1}i_13^{k_2}i_2\cdots$, where $k_i\in\N\cup\{0\}$ and $i_k\in\{1,2\}$ for all $k \in \N$. Relying on this representation, let us define a function $\kappa\colon \hat\Sigma\to\Gamma$ by setting
$$
  \kappa(3^{k_1}i_13^{k_2}i_2\cdots)=i_1i_2\cdots
$$
for all $\iii\in \hat\Sigma$. The definition of $\kappa$ can be naturally extended to $\Sigma_*$ by $\kappa(3^{k_1}i_13^{k_2},\ldots,i_n3^{k_{n+1}})=(i_1,\ldots,i_n)$ and $\kappa(3^k)=\emptyset$, where $k_i\in\N\cup\{0\}$.

Observe that $\kappa^{-1}(C)$ is a countable union of cylinder sets in $\Sigma$ for every cylinder set $C$ in $\Gamma$. Thus $\kappa\colon(\Sigma,\BB_{\Sigma})\to(\Gamma,\BB_{\Gamma})$ is measurable. With a slight abuse of notation, we denote both left-shift operators on $\Sigma$ and $\Gamma$ by $\sigma$. Finally, let us observe that
\begin{equation}\label{eq:invkappa}
\kappa(\sigma\iii)=
\begin{cases}
  \kappa(\iii), & \mbox{if } \iii|_1=3 \\
  \sigma\kappa(\iii), & \mbox{if } \iii|_1\neq3.
\end{cases}
\end{equation}

Let $\mu_h$ be the unique ergodic Gibbs measure on $\Gamma$ for the H\"older continuous potential $h\colon\Gamma\to\R$ defined by
\begin{equation*}
h(\iii)=\log\|A_{\iii|_1}|V(\sigma\iii)\|,
\end{equation*}
where $V(\iii)=\bigcap_{n=1}^{\infty}A_{\iii|_n}(\CC)$.
Since
\begin{equation*}
  \sum_{k=0}^{m-1} h(\sigma^k\iii) = \log\|A_{\iii|_m}|V(\sigma^m\iii)\|,
\end{equation*}
Lemma~\ref{lem:Morris} implies
$$
  \Biggl|\sum_{k=0}^{m-1}h(\sigma^k\iii)-\log\|A_{\iii|_m}\|\Biggr|\leq C.
$$
for all $\iii\in\Gamma$ and $m\in \N$.

Let us assume contrarily that the statement of Lemma~\ref{prop:key} fails. This means that there is a H\"older continuous potential $f\colon\Sigma\to\R$ and a constant $C>0$ such that
\begin{equation}\label{eq:ass}
  \Biggl|\sum_{k=0}^{n-1}f(\sigma^k\iii)-\log\|A_{\iii|_n}\|\Biggr|\leq C
\end{equation}
for all $n\in\N$ and $\iii\in\Sigma$. Our goal is to show that in this case the Gibbs measure $\mu_h$ is a Bernoulli measure. By Proposition~\ref{prop:bernoulli}, as the tuple $(A_1,A_2)$ is irreducible and contains only proximal matrices, this is a contradiction. We will show this after some auxiliary lemmas.

The proof of the following lemma follows easily from the definition of $\kappa$ and the domination of the tuple $(A_1,A_2)$, and we leave it to the reader.

\begin{lemma}\label{lem:boundonnorm}
  There exists $C>0$ such that
  $$
    \Biggl|\log\|A_{\iii|_n}\|-\sum_{k=0}^{n-1-\sharp_3\iii|_n}h(\sigma^k\kappa(\iii))\Biggr|\leq C.
  $$
  for all $\iii\in\hat\Sigma$ and $n\in\N$.
\end{lemma}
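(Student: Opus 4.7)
The plan is to exploit the fact that $A_3 = I$ (after the normalisation by $c^{-1}$ explained at the beginning of \S\ref{sec:3m}), which makes the matrix product invariant under insertion of $3$'s. More precisely, I would first observe that if $\iii \in \hat\Sigma$ is written as $\iii = 3^{k_1}i_13^{k_2}i_2\cdots$ with $i_j \in \{1,2\}$ and $k_j \in \N\cup\{0\}$, and if $m = n - \sharp_3\iii|_n$, then expanding the prefix $\iii|_n$ gives
\[
  A_{\iii|_n} \;=\; A_3^{k_1}A_{i_1}A_3^{k_2}\cdots A_{i_m}A_3^{l} \;=\; A_{i_1}\cdots A_{i_m} \;=\; A_{\kappa(\iii)|_m},
\]
where $l \le k_{m+1}$ is the remaining string of trailing $3$'s inside $\iii|_n$. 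So the left-hand side of the inequality reduces to $\log\|A_{\kappa(\iii)|_m}\|$.

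Next, I would unfold the Birkhoff sum. By definition of $h$,
\[
  \sum_{k=0}^{m-1} h(\sigma^k \kappa(\iii)) \;=\; \sum_{k=0}^{m-1}\log\bigl\|A_{(\sigma^k\kappa(\iii))|_1}\,\big|\,V(\sigma^{k+1}\kappa(\iii))\bigr\|,
\]
and by the cocycle-like identity $\|A_{\jjj|_m}|W\| = \prod_{k=0}^{m-1}\|A_{(\sigma^k\jjj)|_1}|(A_{\sigma^{k+1}\jjj|_{m-k-1}}W)\|$ applied along the nested subspaces $V(\sigma^{k+1}\kappa(\iii)) = A_{(\sigma^k\kappa(\iii))|_1}^{-1}V(\sigma^k\kappa(\iii))$ guaranteed by the domination of $(A_1,A_2)$, the sum telescopes to
\[
  \sum_{k=0}^{m-1} h(\sigma^k\kappa(\iii)) \;=\; \log\bigl\|A_{\kappa(\iii)|_m}\,\big|\,V(\sigma^m\kappa(\iii))\bigr\|.
\]

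Finally, I would apply Lemma~\ref{lem:Morris} with the multicones $\CC_0 \subset \CC^o$ furnished by the domination of $(A_1,A_2)$: since $V(\sigma^m\kappa(\iii)) \subset A_{(\sigma^m\kappa(\iii))|_1}(\CC)\subset \CC_0$ and $A_{\kappa(\iii)|_m}\CC \subset \CC_0$ whenever $m \ge 1$, there is a constant $\kappa_0>0$ with
\[
  \|A_{\kappa(\iii)|_m}\| \;\ge\; \|A_{\kappa(\iii)|_m}\,|\,V(\sigma^m\kappa(\iii))\| \;\ge\; \kappa_0\|A_{\kappa(\iii)|_m}\|.
\]
Taking logarithms gives the desired bound with $C = |\log\kappa_0|$. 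The case $m=0$, where $\iii|_n$ is a block of $3$'s, is trivial since both sides vanish.

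The only subtlety (and the main thing to check carefully) is that $V(\sigma^m\kappa(\iii))$ lies in the inner multicone $\CC_0$, so that Lemma~\ref{lem:Morris} applies; this uses precisely that $\kappa(\iii)$ begins with a symbol in $\{1,2\}$ — equivalently that $m \ge 1$ and the first symbol of $\sigma^m\kappa(\iii)$, being $i_{m+1}\in\{1,2\}$, pushes $\CC$ into $\CC_0$. Every other step is essentially bookkeeping, so no serious obstacle arises.
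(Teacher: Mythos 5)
Your proof is correct and follows essentially the same route as the paper's (which is left to the reader): use $A_3=I$ to eliminate the $3$'s, telescope the Birkhoff sum of $h$ via the cocycle-invariant splitting $V$, and compare $\log\|A\|$ with $\log\|A|V\|$ by Lemma~\ref{lem:Morris} for matrices mapping $\CC$ into $\CC_0$. The only difference from the intended argument is whether one discards the $3$'s before or after unfolding the Birkhoff sum along $\iii$ (using \eqref{eq:invkappa} and $\log\|A_3|V\|=0$), which is merely a matter of bookkeeping.
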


\begin{comment}
\begin{proof}
  If $\iii|_n$ contains an element other than $3$, then $A_{\iii|_n}$ maps the multicone $\CC$ into $\CC_0$. Therefore, by Lemma~\ref{lem:Morris}, there exists a constant $C>0$ such that
  $$
    |\log\|A_{\iii|_n}\|-\log\|A_{\iii|_n}|V(\kappa(\sigma^n\iii))\||\leq C
  $$
  for all $n\in\N$. Note also that is $\iii$ contains only $3$'s, then the above inequality is a triviality.
  By the definition of $V$, we have
  $$
    \log\|A_{\iii|_n}|V(\kappa(\sigma^n\iii))\|=\sum_{k=0}^{n-1}\log\|A_{(\sigma^k\iii)|_1}|V(\kappa(\sigma^k\iii))\|
  $$
  for all $n\in\N$.
  Notice that $\log\|A_3|V\|=0$ for all $V\in\RP$. Moreover, if $(\sigma^k\iii)|_1 \neq 3$, then, by recalling \eqref{eq:invkappa}, we have $V(\kappa(\sigma^k\iii))=V(\sigma^{k-\sharp_3\iii|_k}\kappa(\iii))$. Thus,
  $$
    \sum_{k=0}^{n-1}\log\|A_{(\sigma^k\iii)|_1}|V(\kappa(\sigma^k\iii))\|=\sum_{k=0}^{n-\sharp_3\iii|_n}h(\sigma^k\kappa(\iii))
  $$
  which finishes the proof.
\end{proof}
\end{comment}

Let $f$ be the H\"older continuous potential in \eqref{eq:ass} and let $\mu_f$ be the unique ergodic Gibbs measure for the potential $f$ on $\Sigma$.
By the definition of the pressure and \eqref{eq:ass}, we have
$$
  P\biggl(\biggl(\sum_{k=0}^{n-1}f\circ\sigma^{k}\biggr)_n\biggr)=P((\iii\mapsto\log\|A_{\iii|_n}\|)_n)=\lim_{n\to\infty}\tfrac{1}{n}\log\sum_{\iii\in\Sigma_n}\|A_{\iii}\|.
$$
Let us denote the common quantity by $Q$. Then by the definition of Gibbs measures \eqref{eq:gibbsmeas}, there exists a constant $C>0$ such that
$$
  C^{-1}\exp\biggl(\sum_{k=0}^{n-1}f(\sigma^k\iii)-nQ\biggr)\leq\mu_f([\iii|_n])\leq C\exp\biggl(\sum_{k=0}^{n-1}f(\sigma^k\iii)-nQ\biggr),
$$
for every $\iii\in\Sigma$. Let us write
$$
  R=\lim_{n\to\infty}\tfrac{1}{n}\log\sum_{\iii\in\Gamma_n}\|A_{\iii}\|.
$$
By a simple calculation, recalling that $A_3=I$, we see that
$$
  Q=\lim_{n\to\infty}\tfrac{1}{n}\log\sum_{\iii\in\Sigma_n}\|A_{\iii}\|=\lim_{n\to\infty}\tfrac{1}{n}\log\sum_{\ell=0}^n\binom{n}{\ell}\sum_{\iii\in\Gamma_{\ell}}\|A_{\iii}\|.
$$
Since for every $\varepsilon>0$ there exists a constant $K>0$ such that
$$
  K^{-1}e^{(R-\varepsilon)\ell}\leq\sum_{\iii\in\Gamma_{\ell}}\|A_{\iii}\|\leq Ke^{(R+\varepsilon)\ell}
$$
for every $\ell\in\N$, we see that
$\log(1+e^{R-\varepsilon})\leq Q\leq \log(1+e^{R+\varepsilon})$. Since $\varepsilon>0$ was arbitrary, we get
\begin{equation}\label{eq:presseq}
Q=\log(1+e^{R}).
\end{equation}

Let us define the Perron-Frobenius operators $\LL_f$ and $\LL_h$ on $\Sigma$ and on $\Gamma$, respectively, for the H\"older-continuous potentials $f$ and $h$ as
$$
  (\LL_f(\psi))(\iii)=\sum_{i=1}^3e^{f(i\iii)}\psi(i\iii) \quad \text{and} \quad (\LL_h(\phi))(\iii)=\sum_{i=1}^2e^{h(i\iii)}\phi(i\iii).
$$
By \cite[Theorem~1.7 and the proof of Theorem~1.16]{Bowen}, there exist unique functions $\psi_f\colon\Sigma\to\R$ and $\phi_h\colon\Gamma\to\R$ (i.e.\ eigenfunctions) and unique probability measures $\nu_f$ on $\Sigma$ and $\nu_h$ on $\Gamma$ (i.e.\ eigenmeasures) such that
$$
  \LL_f(\psi_f)=e^{Q}\psi_f,\quad \LL_h(\phi_h)=e^{R}\phi_h,\quad \LL_f^*\nu_f=e^{Q}\nu_f,\quad\LL_h^*(\nu_h)=e^{R}\nu_h,
$$
and $\int \psi_f\dd\nu_f=1=\int\phi_h\dd\nu_h$.
Moreover, by \cite[Lemmas~1.8 and 1.10]{Bowen}, the potentials $\log\psi_f$ and $\log\phi_h$ are H\"older continuous. By induction, it is easy to see that for any function $\varphi$
\[
\begin{split}
(\LL_f^n(\varphi))(\iii)&=\sum_{j_1}e^{f(j_1\iii)}(\LL_f^{n-1}(\varphi))(j_1\iii)=\sum_{j_1,j_2}e^{f(j_1\iii)}e^{f(j_2j_1\iii)}(\LL_f^{n-2}(\varphi))(j_2j_1\iii)\\
&=\sum_{j_1,\ldots,j_n}e^{f(j_1\iii)+\cdots+f(j_n\ldots j_1\iii)}\varphi(j_n\ldots j_1\iii)=\sum_{\kkk\in\Sigma_n}e^{\sum_{k=0}^{n-1}f(\sigma^k\kkk\iii)}\varphi(\kkk\iii).
\end{split}
\]

By \cite[Proposition~1.14]{Bowen} and the uniqueness of the ergodic Gibbs measure,
\begin{equation}\label{eq:conf}
  \mu_f(B)=\int_B\psi_f\dd\nu_f \quad \text{and} \quad \mu_h(B')=\int_{B'}\phi_h\dd\nu_h
\end{equation}
for every $B\in\BB_{\Sigma}$ and $B'\in\BB_{\Gamma}$. Thus, for any $\jjj\in\Sigma_n$ and every $B\in\BB_{\Sigma}$
\begin{equation} \label{eq:thisone}
\begin{split}
  \mu_f([\jjj] \cap \sigma^{-|\jjj|}(B)) &= \int\psi_f(\iii)\mathds{1}_{[\jjj] \cap \sigma^{-|\jjj|}(B)}(\iii)\dd\nu_f(\iii)\\
  &= \int\psi_f(\iii)\mathds{1}_{[\jjj] \cap \sigma^{-|\jjj|}(B)}(\iii)e^{-nQ}\dd(\LL_f^*)^n(\nu_f)(\iii)\\
  &= \int\LL_f^n(\psi_f \mathds{1}_{[\jjj] \cap \sigma^{-|\jjj|}(B)})(\iii)e^{-nQ}\dd\nu_f(\iii)\\
  &=\int\sum_{\kkk\in\Sigma_n}\exp\biggl(\sum_{k=0}^{n-1}f(\sigma^k\kkk\iii)-nQ\biggr)\psi_f(\kkk\iii)\mathds{1}_{[\jjj] \cap \sigma^{-|\jjj|}(B)}(\kkk\iii)\dd\nu_f(\iii)\\
  &=\int_B\exp\biggl(\sum_{k=0}^{n-1}f(\sigma^k\jjj\iii)-nQ\biggr)\psi_f(\jjj\iii)\dd\nu_f(\iii)\\
  &=\int_B\exp\biggl(\sum_{k=0}^{n-1}f(\sigma^k\jjj\iii)-nQ\biggr)\frac{\psi_f(\jjj\iii)}{\psi_f(\iii)}\dd\mu_f(\iii).
\end{split}
\end{equation}
Let $\hat f(\iii)=f(\iii)+\log\psi_f(\iii)-\log\psi_f(\sigma\iii)$. Since $\log\psi_f$ is H\"older continuous and thus, uniformly bounded over $\Sigma$, there exists $C>0$ such that
$$
  \Biggl|\sum_{k=0}^{n-1}\hat f(\sigma^k\iii)-\log\|A_{\iii|_n}\|\Biggr|\leq C
$$
for all $n\in\N$ and $\iii\in\Sigma_n$.
%It also follows that $\mu_{\hat f}=\mu_f$, where $\mu_{\hat f}$ is the unique ergodic Gibbs measure for $\hat f$.
By \eqref{eq:thisone},
\begin{equation}\label{eq:thison2} %\label{eq:mu-iB}
  \mu_{f}([\iii] \cap \sigma^{-|\iii|}(B))=\int_{B}\exp\left(\sum_{k=0}^{|\iii|-1}\hat f(\sigma^k\jjj\iii)-|\iii|Q\right)\dd\mu_{f}(\iii).
\end{equation}
for all $\iii\in\Sigma_*$ and $B\in\BB_{\Sigma}$.

Let us denote the ratio $(1+e^R)^{-1}$ by $q$. Define
$$
  \eta([\iii])=q^{\sharp_3\iii}(1-q)^{|\iii|-\sharp_3\iii}\mu_h([\kappa(\iii)])
$$
for all $\iii\in\Sigma_*$ and notice that
\begin{equation}\label{eq:toeta}
\begin{split}
  \sum_{i=1}^3\eta([\iii i])&=\sum_{i=1}^2q^{\sharp_3\iii}(1-q)^{|\iii|+1-\sharp_3\iii}\mu_h([\kappa(\iii i)])+q^{\sharp_3\iii+1}(1-q)^{|\iii|-\sharp_3\iii}\mu_h([\kappa(\iii)])\\
  &=q^{\sharp_3\iii}(1-q)^{|\iii|+1-\sharp_3\iii}\sum_{i=1}^2\mu_h([\kappa(\iii)i])+q^{\sharp_3\iii+1}(1-q)^{|\iii|-\sharp_3\iii}\mu_h([\kappa(\iii)])\\
  &=q^{\sharp_3\iii}(1-q)^{|\iii|-\sharp_3\iii}\mu_h([\kappa(\iii)])(1-q+q)=\eta([\iii]).
\end{split}
\end{equation}
for all $\iii\in\Sigma_*$.
Thus, by Kolmogorov's extension theorem, $\eta$ can be extended to a probability measure on $(\Sigma,\BB_{\Sigma})$. We shall denote the extension by $\eta$ too. The following lemma shows that $\eta$ is ergodic.

\begin{lemma}\label{lem:propeta}
  The measure $\eta$ is $\sigma$-invariant and mixing on $\Sigma$.
\end{lemma}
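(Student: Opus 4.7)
The plan is twofold: verify $\sigma$-invariance by a one-line computation on cylinders, and establish mixing by exploiting a skew-product picture that reduces the problem to mixing of $\mu_h$ combined with concentration of a binomial distribution.

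For $\sigma$-invariance it suffices to verify $\sum_{j=1}^{3}\eta([j\iii])=\eta([\iii])$ for every $\iii\in\Sigma_*$, since cylinders form a $\pi$-system generating $\BB_{\Sigma}$. Using $\kappa(3\iii)=\kappa(\iii)$ and $\kappa(j\iii)=j\kappa(\iii)$ for $j\in\{1,2\}$, together with $\sigma$-invariance of $\mu_h$ (which holds because $\mu_h$ is Gibbs for the H\"older potential $h$; see \cite[Theorem~1.22]{Bowen}), the three terms collapse to $q^{\sharp_3\iii}(1-q)^{|\iii|-\sharp_3\iii}\mu_h([\kappa(\iii)])(q+(1-q))=\eta([\iii])$.

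For mixing, the guiding intuition is that $\eta$ is morally the joining in which an i.i.d.\ Bernoulli$(q)$ process selects the positions of $3$'s, independently of a $\mu_h$-distributed sequence filling the remaining slots. To make this precise, I would fix cylinders $[\iii]$ with $\iii\in\Sigma_m$ and $[\jjj]$ with $\jjj\in\Sigma_\ell$, expand
$$
\eta([\iii]\cap\sigma^{-n}[\jjj])=\sum_{\kkk\in\Sigma_{n-m}}\eta([\iii\kkk\jjj]),
$$
and group the middle words by their $\kappa$-image $\kkk'\in\Gamma_r$; the number of $\kkk\in\Sigma_{n-m}$ with prescribed image $\kkk'$ equals $\binom{n-m}{r}$, coming from the choice of the positions of the $r$ non-$3$ symbols. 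After factoring out the contributions of $\iii$ and $\jjj$, the expression rearranges as
$$
q^{\sharp_3\iii+\sharp_3\jjj}(1-q)^{m+\ell-\sharp_3\iii-\sharp_3\jjj}\sum_{r=0}^{n-m}\binom{n-m}{r}q^{n-m-r}(1-q)^{r}a_r,
$$
where $a_r:=\mu_h([\kappa(\iii)]\cap\sigma^{-|\kappa(\iii)|-r}[\kappa(\jjj)])$.

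Since $\mu_h$ is mixing, $a_r\to a:=\mu_h([\kappa(\iii)])\mu_h([\kappa(\jjj)])$ as $r\to\infty$. The inner sum is precisely the expectation of $a_R$ with $R\sim\mathrm{Bin}(n-m,1-q)$, whose mass concentrates at values tending to infinity. Since $0\le a_r\le 1$, a standard truncation separating $r<R_0$ from $r\ge R_0$ (combined with the weak law of large numbers for the binomial) forces this expectation to $a$, and multiplying by the prefactor yields $\eta([\iii])\eta([\jjj])$; mixing on cylinders then extends to $\BB_{\Sigma}$ by a standard $\pi$-$\lambda$ approximation. The only step that demands genuine care is the combinatorial rearrangement producing the binomial-weighted expectation, since all subsequent limits are then textbook consequences of mixing of $\mu_h$ and concentration of the binomial.
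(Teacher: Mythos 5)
Your proof is correct and follows essentially the same route as the paper's: both expand $\eta([\iii]\cap\sigma^{-n}[\jjj])$ over middle words, group by $\kappa$-image to produce a binomial-weighted sum whose coefficients converge by mixing of $\mu_h$, and conclude by concentration of the binomial (the paper writes this out via the bound $\binom{n}{k}\le n^k$ rather than invoking the weak law, but that is purely cosmetic). The paper omits the $\sigma$-invariance computation as ``similar to \eqref{eq:toeta}''; your explicit cylinder verification is the same calculation and is correct.
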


\begin{proof}
  Since $\mu_h$ is $\sigma$-invariant, the proof of $\sigma$-invariance of $\eta$ is similar to \eqref{eq:toeta}, and therefore, we omit it. To prove that $\eta$ is mixing, it is sufficient to show that
  $$
    \lim_{n\to\infty}\eta([\iii]\cap\sigma^{-n}[\jjj])=\eta([\iii])\eta([\jjj]).
  $$
  for all $\iii,\jjj\in\Sigma_*$.
  Let $n>|\iii|$ and observe that
  \begin{align*}
    \eta([\iii]\cap\sigma^{-n}[\jjj])&=\sum_{\hhh\in\Sigma_{n-|\iii|}}\eta([\iii\hhh\jjj])=\sum_{\hhh\in\Sigma_{n-|\iii|}}q^{\sharp_3\iii+\sharp_3\jjj+\sharp_3\hhh}(1-q)^{|\iii|-\sharp_3\iii+|\jjj|-\sharp_3\jjj+|\hhh|-\sharp_3\hhh}\mu_h([\kappa(\iii\hhh\jjj)])\\
    &=q^{\sharp_3\iii+\sharp_3\jjj}(1-q)^{|\iii|-\sharp_3\iii+|\jjj|-\sharp_3\jjj}\sum_{\hhh\in\Sigma_{n-|\iii|}}q^{\sharp_3\hhh}(1-q)^{|\hhh|-\sharp_3\hhh}\mu_h([\kappa(\iii)\kappa(\hhh)\kappa(\jjj)])\\
    &=q^{\sharp_3\iii+\sharp_3\jjj}(1-q)^{|\iii|-\sharp_3\iii+|\jjj|-\sharp_3\jjj}\sum_{\ell=0}^{n-|\iii|}\binom{n-|\iii|}{\ell}q^{n-|\iii|-\ell}(1-q)^{\ell}\sum_{\kkk\in\Gamma_{\ell}}\mu_h([\kappa(\iii)\kkk\kappa(\jjj)]).
  \end{align*}
  Hence,
  \begin{equation}\label{eq:refasked}
  \frac{\eta([\iii]\cap\sigma^{-n}[\jjj])}{\eta([\iii])\eta([\jjj])}=\sum_{\ell=0}^{n-|\iii|}\binom{n-|\iii|}{\ell}q^{n-|\iii|-\ell}(1-q)^{\ell}\frac{\mu_h([\kappa(\iii)\cap\sigma^{-\ell-|\kappa(\iii)|}\kappa(\jjj)])}{\mu_h([\kappa(\iii)])\mu_h([\kappa(\jjj)])}.
  \end{equation}

  By \cite[Proposition~1.14]{Bowen}, the measure $\mu_h$ is mixing. Thus, for every $\varepsilon>0$ there exists $N$ such that if $\ell\geq N$, then
  $$
    e^{-\varepsilon}\leq\frac{\mu_h([\kappa(\iii)]\cap\sigma^{-\ell-|\kappa(\iii)|}[\kappa(\jjj)])}{\mu_h([\kappa(\iii)])\mu_h([\kappa(\jjj)])}\leq e^{\varepsilon}.
  $$
  Hence, for $n>N+|\iii|$ we get
  \begin{align*}
    \sum_{\ell=0}^{n-|\iii|}&\binom{n-|\iii|}{\ell}q^{n-|\iii|-\ell}(1-q)^{\ell}\frac{\mu_h([\kappa(\iii)\cap\sigma^{-\ell-|\kappa(\iii)|}\kappa(\jjj)])}{\mu_h([\kappa(\iii)])\mu_h([\kappa(\jjj)])}\\	&\leq e^{\varepsilon}\sum_{\ell=N}^{n-|\iii|}\binom{n-|\iii|}{\ell}q^{n-|\iii|-\ell}(1-q)^{\ell}+\mu_h([\kappa(\jjj)])^{-1}\sum_{\ell=0}^{N-1}\binom{n-|\iii|}{\ell}q^{n-|\iii|-\ell}(1-q)^{\ell}\\
    &\leq e^{\varepsilon}+\mu_h([\kappa(\jjj)])^{-1}N(n-|\iii|)^N(1-q)^{n-|\iii|-N},
  \end{align*}
  where in the last inequality we used $\binom{n}{k} \leq n^k$. By a similar argument,
  \begin{align*}
    \sum_{\ell=0}^{n-|\iii|}\binom{n-|\iii|}{\ell}&q^{n-|\iii|-\ell}(1-q)^{\ell}\frac{\mu_h([\kappa(\iii)\cap\sigma^{-\ell-|\kappa(\iii)|}\kappa(\jjj)])}{\mu_h([\kappa(\iii)])\mu_h([\kappa(\jjj)])}\\
    &\geq e^{-\varepsilon}\sum_{\ell=N}^{n-|\iii|}\binom{n-|\iii|}{\ell}q^{n-|\iii|-\ell}(1-q)^{\ell}\geq e^{-\varepsilon}-N(n-|\iii|)^N(1-q)^{n-|\iii|-N}.
  \end{align*}
  By \eqref{eq:refasked} and letting $n\to\infty$, we see that for every $\varepsilon>0$
  \begin{equation*}
    e^{-\varepsilon}\leq\lim_{n\to\infty}\frac{\eta([\iii]\cap\sigma^{-n}[\jjj])}{\eta([\iii])\eta([\jjj])}\leq e^{\varepsilon}.
  \end{equation*}
  Since $\varepsilon>0$ was arbitrary, the definition of $\eta$ finishes the proof.
\end{proof}

\begin{proposition}\label{prop:etaismuf}
  If $\eta$ and $\mu_f$ are as above, then $\eta=\mu_f$.
\end{proposition}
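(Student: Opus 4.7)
The plan is to prove the matching two-sided bound
$$
\eta([\iii|_n]) \asymp e^{-nQ} \|A_{\iii|_n}\| \asymp \mu_f([\iii|_n])
$$
with multiplicative constants independent of $n$ and $\iii$, and then deduce $\eta = \mu_f$ from the ergodicity of both measures. The right-hand comparison for $\mu_f$ is immediate from the assumption \eqref{eq:ass} together with the Gibbs property of $\mu_f$ for $f$, so the real task is to establish the left-hand comparison for $\eta$.

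To do this I would first combine Lemma~\ref{lem:boundonnorm} with the fact that $\mu_h$ is a Gibbs measure on $\Gamma$ for the H\"older continuous potential $h$, yielding
$$
\mu_h([\kappa(\iii|_n)]) \asymp \exp\bigl(-(n-\sharp_3\iii|_n)R\bigr)\, \|A_{\iii|_n}\|
$$
uniformly in $\iii$ and $n$ (the degenerate case $\iii|_n = 3^n$ being handled trivially, since then $\mu_h([\kappa(3^n)])=\mu_h(\Gamma)=1$ and $\|A_{3^n}\|=1$). Plugging this into the definition $\eta([\iii|_n]) = q^{\sharp_3\iii|_n}(1-q)^{n-\sharp_3\iii|_n}\mu_h([\kappa(\iii|_n)])$ and using the algebraic identity
$$
q^{\sharp_3\iii|_n}(1-q)^{n-\sharp_3\iii|_n} e^{-(n-\sharp_3\iii|_n)R} = (1+e^R)^{-n} = e^{-nQ}
$$
then yields the desired estimate. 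This identity is just the computation $(1-q)e^{-R} = q$, which is a direct consequence of $q = (1+e^R)^{-1}$, combined with \eqref{eq:presseq}.

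Once the two-sided comparison on cylinders is in place, the conclusion is formal. By Lemma~\ref{lem:propeta} the measure $\eta$ is $\sigma$-invariant and mixing, and by \cite[Proposition~1.14]{Bowen} the measure $\mu_f$ is $\sigma$-invariant and ergodic. The uniform cylinder comparison implies $\eta \ll \mu_f$ with bounded Radon--Nikodym derivative $d\eta/d\mu_f$; the $\sigma$-invariance of both measures forces this function to be $\sigma$-invariant $\mu_f$-almost everywhere, and hence a $\mu_f$-a.e.\ constant by ergodicity of $\mu_f$. Since $\eta$ and $\mu_f$ are both probability measures the constant equals $1$, giving $\eta=\mu_f$. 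The main point to check carefully is the algebraic identity linking $q$, $R$, and $Q$; once this is verified the rest of the argument is standard.
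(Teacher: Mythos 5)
Your proof is correct and follows essentially the same route as the paper: establish a uniform two-sided comparison of $\eta$ and $\mu_f$ on cylinders via Lemma~\ref{lem:boundonnorm}, the Gibbs properties of $\mu_h$ and $\mu_f$, and the algebraic identity $q^{\sharp_3\iii}(1-q)^{|\iii|-\sharp_3\iii}e^{-(|\iii|-\sharp_3\iii)R}=e^{-|\iii|Q}$, then invoke ergodicity of both measures to upgrade equivalence to equality. Your presentation is slightly cleaner than the paper's (which carries out the comparison in a single chain of inequalities), and your explicit handling of the degenerate case $\iii|_n=3^n$ is a nice addition, but the underlying argument is identical.
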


\begin{proof}
  Since both $\eta$ and $\mu_f$ are ergodic measures, it suffices to show that they are equivalent. By Lemma~\ref{lem:boundonnorm} and our assumption \eqref{eq:ass} on $f$,
  \begin{align*}
    \eta([\iii])&=q^{\sharp_3\iii}(1-q)^{|\iii|-\sharp_3\iii}\mu_h([\kappa(\iii)])\\
    &\leq Cq^{\sharp_3\iii}(1-q)^{|\iii|-\sharp_3\iii}\exp\biggl(\sum_{k=0}^{|\kappa(\iii)|-1}h(\sigma^k\kappa(\iii)\jjj)-|\kappa(\iii)|R\biggr)\\
    &\leq C'(1-q)^{|\iii|-\sharp_3\iii}\|A_{\iii}\|\exp\bigl(-(|\iii|-\sharp_3\iii)R-\sharp_3\iii\log(1+e^R)\bigr)\\
    &=C'\|A_{\iii}\|\exp(-|\iii|\log(1+e^R))\\
    &\leq C''\exp\biggl(\sum_{k=0}^{|\iii|-1}f(\sigma^k\iii\jjj')-|\iii|\log(1+e^R)\biggr)\leq C'''\mu_f([\iii])
  \end{align*}
  for all $\iii\in\Sigma_*$.
  The other inequality follows by a similar argument. Since for every cylinder set $[\iii]$, the ratio $\eta([\iii])/\mu_f([\iii])$ is bounded away from $0$ and $\infty$ uniformly, the statement follows.
\end{proof}

By \eqref{eq:thison2} and \eqref{eq:presseq},
\begin{equation*} %\label{eq:mu-iB}
  \mu_{f}([i] \cap \sigma^{-1}(B))=\int_{B}\exp(\hat f(i\iii)-Q)\dd\mu_{f}(\iii)=\frac{1}{1+e^R}\int_{B}\exp(\hat f(i\iii))\dd\mu_{f}(\iii)
\end{equation*}
for all $i\in\{1,2,3\}$ and $B\in\BB_{\Sigma}$.
By Proposition~\ref{prop:etaismuf} and recalling the definition of $\eta$, we have
\begin{equation*} %\label{eq:mu-3b}
  \mu_{f}([3] \cap \sigma^{-1}(B))=\eta([3] \cap \sigma^{-1}(B))=\frac{1}{1+e^{R}}\eta(B)=\frac{1}{1+e^{R}}\mu_{f}(B).
\end{equation*}
Since $\hat f$ is H\"older continuous and the above two equations hold for every  $B\in\BB_{\Sigma}$, we conclude that
\begin{equation}\label{eq:f'piecewise}
  \hat f(\iii)=0
\end{equation}
for all $\iii\in[3]$. By \eqref{eq:thison2}, we have
\begin{align*}
  \mu_f([i_13^{k_1}\cdots i_n3^{k_n}])&=\int \exp\biggl(\sum_{\ell=0}^{k_1+\cdots+k_n+n-1}\hat f(\sigma^{\ell}i_13^{k_1}\cdots i_n3^{k_n}\jjj)-(k_1+\cdots+k_n+n)Q\biggr)\dd\mu_f(\jjj)\\
  &=q^{k_1+\cdots+k_n}\int \exp(\hat f(i_13^{k_1}\cdots\jjj)+\cdots+\hat f(i_n3^{k_n}\jjj)-nQ)\dd\mu_f(\iii)
\end{align*}
for every $k_1,\ldots,k_n\in\N$, $i_1,\dots,i_n\in\{1,2\}$, and $n\in\N$.
Since $\hat f$ is H\"older continuous, we have
$$
  \lim_{k_{1},\ldots,k_n\to\infty}\hat f(i_{\ell}3^{k_{\ell}}\cdots i_n3^{k_n}\jjj)=\hat f(i_{\ell}3^{\infty})
$$
uniformly for all $\ell\in\{1,\ldots,n\}$ and $\jjj\in\Sigma$. Hence by the dominated convergence theorem
$$
  \lim_{k_{1},\ldots,k_n\to\infty}\frac{\mu_f([i_13^{k_1}\cdots i_n3^{k_n}])}{q^{k_1+\cdots+k_n}}=\prod_{\ell=1}^ne^{\hat f(i_{\ell}3^{\infty})-Q}.
$$
On the other hand, by the definition of $\eta$ and Proposition~\ref{prop:etaismuf},
$$
  \frac{\mu_f([i_13^{k_1}\cdots i_n3^{k_n}])}{q^{k_1+\cdots+k_n}}=(1-q)^{n}\mu_h([i_1\cdots i_n]).
$$
It follows that
$$
  \mu_h([i_1\cdots i_n])=\prod_{\ell=1}^ne^{\hat f(i_{\ell}3^{\infty})-R}
$$
and hence, $\mu_h$ is a Bernoulli measure. This contradicts Proposition~\ref{prop:bernoulli} and finishes the proof of Lemma~\ref{prop:key}.

\begin{acknowledgements}
  Bal\'azs B\'ar\'any acknowledges support from the grants NKFI PD123970, OTKA K123782, and the J\'anos Bolyai Research Scholarship of the Hungarian Academy of Sciences. Antti K\"aenm\"aki was supported by the Finnish Center of Excellence in Analysis and Dynamics Research. Ian Morris was supported by the Leverhulme Trust (Research Project Grant number RPG-2016-194). All the authors were partially supported by the ERC grant 306494. The research was started in the Hebrew University of Jerusalem. The authors thank the HUJI, and especially Professor Michael Hochman, for warm hospitality. B\'ar\'any and K\"aenm\"aki also thank the Institut Mittag-Leffler, where the paper was finished. Finally, the authors thank the anonymous referee for the careful reading and valuable comments which improved the paper. 
\end{acknowledgements}

%\bibliographystyle{abbrv}
%\bibliography{Bibliography}

%%%%%%%%%%%%%%%%%%%%%%%%%%%%%%%%%%%%%%%%%%%%

\end{document}